\hfill \rule[2.2ex]{#2}{.1pt}\vspace{-2ex}\\%
	\hfill\textsc{#1} \vspace{-3ex}} 
\algrenewcommand\algorithmicrequire{\textbf{Input:}}
\algrenewcommand\algorithmicensure{\textbf{Output:}}
\pgfplotsset{compat=1.18}
\newtheorem{theorem}{Theorem}[section]
\newtheorem{corollary}[theorem]{Corollary} 
\newtheorem{lemma}[theorem]{Lemma}
\newtheorem{proposition}[theorem]{Proposition}
\newtheorem{remark}[theorem]{Remark}
\theoremstyle{definition}
\newtheorem{example}[theorem]{Example}
\newtheorem{definition}[theorem]{Definition}
\newtheorem*{theorem**}{Theorem\theoremnum}
\newenvironment{theorem*}[1][]{%
  \edef\theoremnum{\if\relax\detokenize{#1}\relax\else~#1\fi}
  \begin{theorem**}
}{%
  \end{theorem**}
}
\DeclareMathAlphabet\mathbscr{U}{eus}{b}{n}
\newcommand{\fa}{\mathbf{a}}
\newcommand{\fb}{\mathbf{b}}
\newcommand{\fc}{\mathbf{c}}
\newcommand{\fx}{\mathbf{x}}
\newcommand{\fv}{\mathbf{v}}
\newcommand{\fw}{\mathbf{w}}
\newcommand{\Acal}{\mathcal{A}}
\newcommand{\Ocal}{\mathcal{O}}
\newcommand{\CC}{\mathbb{C}}
\newcommand{\PP}{\mathbb{P}}
\newcommand{\RR}{\mathbb{R}}
\newcommand{\bT}{\mathbf{T}}
\DeclareMathOperator{\Span}{span}
\DeclareMathOperator{\rank}{rank}
\DeclareMathOperator{\colspan}{colspan}
\DeclareMathOperator{\Vect}{vect}
\DeclareMathOperator{\Diag}{Diag}
\newcommand{\T}{^\mathsf{T}}
\newcommand{\fe}{\mathbf{e}}
\newcommand{\SB}{\mathbb{S}}
\newcommand{\fu}{\mathbf{u}}
\newcommand{\fy}{\mathbf{y}}
\newcommand{\Bcal}{\mathcal{B}}
\newcommand{\bM}{\mathbf{M}}
\newcommand{\sT}{\mathbscr{T}}
\newcommand{\Id}{\mathbf{I}}
\newcommand{\sR}{\mathbscr{R}}
\newcommand{\sS}{\mathbscr{S}}
\providecommand{\keywords}[1]
{
  \small	
  \textit{Keywords: } #1
}
\title[Multi-subspace power method]{Multi-subspace power method
\\
for decomposing partially symmetric tensors}
\author[Kexin Wang]{Kexin Wang$^{1}$}
\author[Jo\~ao M. Pereira]{Jo\~ao M. Pereira$^{2}$}
\author[Joe Kileel]{Joe Kileel$^{3}$}
\author[Anna Seigal]{Anna Seigal$^{1,*}$}
\thanks{%
\noindent$^{1}$\;John A. Paulson School of Engineering and Applied Sciences, Harvard University, Cambridge, MA, USA\\
\noindent$^{2}$\;Department of Mathematics, University of Georgia, Athens, GA, USA
\\
\noindent$^{3}$\;Department of Mathematics, University of Texas at Austin, Austin, TX, USA
\\ \noindent$^{*}$\;Corresponding author: aseigal@seas.harvard.edu
}
\date{}
\keywords{tensor decomposition, partially symmetric tensors, CP decomposition, power method, singular vector tuples}
\begin{document}

\begin{abstract}
We present an algorithm for low rank decomposition of tensors
of any symmetry type, from fully asymmetric to fully symmetric. 
It recovers the decomposition one summand at a time via the higher-order power method. 
This approach is known to fail in general: there need not be a relationship between the summands of a decomposition and the (partially symmetric) singular vector tuples (pSVTs) of the tensor. Our approach overcomes this problem by transforming the input to a tensor with orthonormal slices, via orthogonalization of a flattening.
The summands of the decomposition of the original tensor can be recovered from the pSVTs of this new transformed tensor. 
We introduce a shifted power method for computing pSVTs and prove its global convergence. 
Numerical experiments 
demonstrate that our algorithm achieves higher accuracy and faster runtime than existing methods.
\end{abstract}

\maketitle

\section{Introduction}\label{sec: intro}

A tensor is a multi-indexed array of numbers. 
The space of real tensors of format $m_1 \times \cdots \times m_d$ is denoted by $\mathbb{R}^{m_1} \otimes \cdots \otimes \mathbb{R}^{m_d}$. The number of indices $d$ is called the order of the tensor. We assume that $m_i\geq 2$ for all $i=1,\ldots,d$.
A tensor is \textit{partially symmetric} if its entries are unchanged under certain permutations of indices, as follows.
Let $\mathfrak{S}_d$ denote the symmetry group on $d$ letters and consider a tensor $\sT \in (\mathbb{R}^{m_1})^{\otimes d_1} \otimes \cdots \otimes (\mathbb{R}^{m_\ell})^{\otimes d_\ell}$. 
We say $\sT$ has partial symmetry of type $(d_1,\dots,d_\ell)$ if
\[
\sT_{i^1_1,\dots,i^1_{d_1},\dots,i^\ell_1,\dots,i^\ell_{d_\ell}} = 
\sT_{i^1_{\sigma_1(1)},\dots,i^1_{\sigma_1(d_1)},\dots,i^\ell_{\sigma_\ell(1)},\dots,i^\ell_{\sigma_\ell(d_\ell)} }
\]
for all $(\sigma_1,\ldots,\sigma_\ell) \in \mathfrak{S}_{d_1} \times \cdots \times \mathfrak{S}_{d_\ell}$; that is, for all permutations that act within each block of indices.  
We denote the space of tensors with this symmetry type by
\(
S^{d_1}(\mathbb{R}^{m_1}) \otimes \cdots \otimes S^{d_\ell}(\mathbb{R}^{m_\ell}) \).
For example $\sT \in S^2(\mathbb{R}^m) \otimes \mathbb{R}^n$ has symmetry type $(2,1)$, which means $\sT_{ijk} = \sT_{jik}$ for all $i, j \in [m]$. 
A tensor is (fully) symmetric if $\ell=1$ and  (fully) asymmetric if $d_1=\cdots=d_\ell=1$.

A CP decomposition writes a tensor as a sum of outer products of vectors.
An order three tensor 
$\sT \in \mathbb{R}^{m_1} \otimes \mathbb{R}^{m_2} \otimes \mathbb{R}^{m_3}$ 
has a decomposition
\(
\sT = \sum_{i=1}^r \mathbf{a}_i \otimes \mathbf{b}_i \otimes \mathbf{c}_i\),
where $\mathbf{a}_i \in \mathbb{R}^{m_1}$, $\mathbf{b}_i \in \mathbb{R}^{m_2}$, and 
$\mathbf{c}_i \in \mathbb{R}^{m_3}$, for sufficiently large $r$. 
We normalize the vectors to lie on the unit sphere 
and include a scalar coefficient $\lambda_i$ to write:
\[
\sT = \sum_{i=1}^r \lambda_i \, 
\mathbf{a}_i \otimes \mathbf{b}_i \otimes \mathbf{c}_i.
\]
The smallest $r$ for which such a decomposition holds is the rank of $\sT$. Each summand is a rank-one tensor.
CP decompositions recover interpretable structure, for example, they fit observed data to mixture models \cite{anandkumar2014tensor} and encode the complexity of linear operators \cite{landsberg2017geometry}.

For a partially symmetric tensor, it is often desirable to compute a decomposition where each rank-one term inherits the partial symmetry. This is called a \emph{partially symmetric CP decomposition}.
It writes the tensor as a sum of rank-one partially symmetric tensors, e.g. $\lambda
(\fv^{(1)})^{\otimes d_1} \otimes (\fv^{(2)})^{\otimes d_2} \otimes \cdots \otimes (\fv^{(\ell)})^{\otimes d_\ell}$. 
Enforcing partial symmetry uses fewer parameters and gives  decompositions that are easier to interpret.
Without enforcing partial symmetry in the summands,  CP decomposition  generally fails to preserve  partial symmetry.

Partially symmetric tensors and their decompositions arise across a range of settings.
In statistics and signal processing, covariance matrices, higher-order moments, and cumulants are symmetric tensors that encode distributional information and are central to PCA, Mixture Models, and ICA \cite{anandkumar2014tensor,comon1994independent,pereira2022tensor,zhang2023moment}. When two datasets are analyzed jointly, such as in contrastive PCA, contrastive ICA, and multi-context PCA~\cite{abid2018exploring,wang2024contrastive,wang2026multi},
it yields partially symmetric structure.
In elastic material analysis, the fourth-order stiffness (or compliance) tensor $\mathbscr{C}$ encodes a map from symmetric strain to symmetric stress. Physical and energetic considerations impose the symmetries $\mathbscr{C}_{ijkl}=\mathbscr{C}_{jikl}=\mathbscr{C}_{ijlk}=\mathbscr{C}_{klij}$. This tensor encodes the constitutive law of the material, and its $M$-eigenvectors, critical points of $f(\mathbf{x},\mathbf{y})=\mathbscr{C}(\mathbf{x},\mathbf{y},\mathbf{x},\mathbf{y})$, identify directional wave speeds and certify strong ellipticity \cite{han2009conditions,li2019m}. 
In quantum many-body systems, the Hilbert space (space of wave functions) of $N$ identical bosons on a $d$-level lies in $S^{N}(\CC^{d})$. 
With two distinguishable bosonic species, the Hilbert space is $S^{N_1}(\CC^{d_1})\otimes S^{N_2}(\CC^{d_2})$,  symmetric within but not across species \cite{folland2021quantum,doherty2002distinguishing}. 
In algebraic geometry, the generic rank and identifiability of partially symmetric tensors are active topics \cite{abo2010non,abo2024non,johnston2023computing}. 
Partially symmetric tensors also appear in computer vision~\cite{miao2024tensor}, 
and in training polynomial neural networks \cite{usevich2025identifiability}.
In many of these applications, preservation of the partial symmetry in the decomposition is needed to best interpret the rank-one components.

Many algorithms exist for CP decomposition. They include optimization methods, such as alternating least squares (ALS), nonlinear least squares (NLS), and unconstrained nonlinear optimization (MINF), which minimize the reconstruction error~\cite{tensorlab3.0}.  Other methods are based on simultaneous diagonalization approaches, including Jennrich’s algorithm~\cite{harshman1970foundations}, FFDIAG~\cite{ziehe2004fast}, Jacobi~\cite{cardoso1996jacobi}, QRJ1D~\cite{afsari2006simple}, and SGSD~\cite{de2004computation}, for order three tensors.

In this paper, we introduce a novel algorithm for partially symmetric tensor decomposition,  which we call the \emph{Multi-Subspace Power Method} (MSPM).
It generalizes the subspace power method (SPM)~\cite{kileel2021landscape,kileel2025subspace} from symmetric tensors to the general setting.
Unlike most CP decomposition algorithms, it applies to tensors with any partial symmetry pattern and recovers the rank-one components in the partially symmetric decomposition one at a time. To do this, it uses the partially symmetric higher-order power method (PS-HOPM). Finding components one by one avoids the ill-posedness of rank-$r$ tensor approximation~\cite{de2008tensor} (since the set of rank-one tensors is closed) and simplifies the optimization landscape, reducing sensitivity to initialization and local minima.
It is known that computing a tensor decomposition one term at a time fails in general~\cite{draisma2018best,horobect2025does,ribot2024decomposing,stegeman2010subtracting,vannieuwenhoven2014generic}. MSPM gets around this problem by transforming the input tensor to have orthonormal last slices, via a general linear change of basis. 

The key to MSPM is that tensors with orthonormal slices and sufficiently low rank have CP decompositions in which the summands correspond to the partially symmetric singular vector tuples (pSVTs), see Table~\ref{tab:tensor psvt}.
A decomposition of the transformed tensor is a transformation of a decomposition of the original tensor. 
However, best rank $r$ approximations
of the original tensor and the transformed tensor need not relate under this transformation, 
since rank $r$ approximation need not be equivariant under a non-orthogonal linear transformation.
Our numerical experiments demonstrate that the change of basis is helpful theoretically without being harmful numerically.
Tensor decomposition then reduces to computing singular vector tuples.
See Table~\ref{tab:max_rank} for the ranks supported by MSPM.

\begin{table}[tbp]
\centering
\footnotesize
\begin{tabular}{p{4.2cm} p{8cm} p{3.2cm}}
\toprule
Tensor format & Summands of CP decomposition vs. pSVTs & Model dimension, for $(\RR^{n})^{\otimes 3}$\\
\toprule
Order two (matrix) &  The SVD is a decomposition where the summands are singular vectors. & $n^2$ \newline (in~$(\RR^n)^{\otimes 2}$)\\
\midrule 
Odeco &  Each summand is a pSVT. The singular values are the coefficients~\cite{robeva2017singular}. & $3{n\choose 2}+n$\\
\midrule 
Low-rank with orthonormal last slices (rank equal to the last dimension)
& Each summand is a pSVT with singular value one; the vectors coincide in all but the last mode (Theorem~\ref{thm:decomp and singular vector}). & $3n^2-2n-{n+1\choose 2}$\\
\midrule 
Low-rank (rank equal to the rank of a flattening) & The tensor can be transformed to one with orthonormal slices. & $3n^2-2n$\\
\midrule 
Orthonormal last slices (no rank restriction) & No correspondence between summands and pSVTs, in general. & $n^3 - {n+1\choose 2}$ \\ \midrule 
General (rank higher than the rank of any flattening) & No correspondence between summands and pSVTs; subtracting a rank-one term from pSVT may increase the rank \cite{stegeman2010subtracting}. Tensors may lie in the span of the pSVTs, but such a decomposition need not be minimal \cite{draisma2018best}. & $n^3$\\
\bottomrule
\end{tabular}
\caption{The table summarizes the relationship between CP summands and pSVTs across tensor families of increasing generality. 
The model dimensions are computed by parameter counting.
The singular vector tuples of a tensor do not in general correspond to the summands in a minimal CP decomposition. This work uncovers new families of tensors for which a correspondence holds. 
Above we equate pSVTs $(\fv^{(1)}, \ldots, \fv^{(\ell)})$ of a tensor in $S^{d_1}( \RR^{m_1}) \otimes \cdots \otimes S^{d_\ell}(\RR^{m_\ell})$ with their corresponding rank one partially symmetric tensors 
$(\fv^{(1)})^{\otimes d_1} \otimes (\fv^{(2)})^{\otimes d_2} \otimes \cdots \otimes (\fv^{(\ell)})^{\otimes d_\ell}$.}
\label{tab:tensor psvt}
\end{table}

\begin{table}[tbp]
\centering
\begin{tabular}{cc}
\toprule
Tensor Space & Maximal Rank \\
\midrule
$(\mathbb{R}^n)^{\otimes 3}$ &  $n$  \\
$S^2 (\RR^n) \otimes \RR^n$ &  $n$\\
$S^3 (\RR^n) \otimes \RR^n$ &  
${n+1 \choose 2}$
\\
$S^2 (\RR^n) \otimes S^2 (\RR^n)$ &  
$(n-1)^2$
\\
$(\mathbb{R}^n)^{\otimes 4}$ & 
$(n-1)^2$
\\
$S^4 (\RR^n) \otimes \RR^n$ &  
$(n-1)^2$
\\
$S^3 (\RR^n) \otimes S^2 (\RR^n)$ &  
$n^2$
\\
\bottomrule
\end{tabular}
\caption{Maximum rank allowed by MSPM, assuming $n \geq 5$.}
\label{tab:max_rank}
\end{table}

To compute partially symmetric singular vector tuples in MSPM, we introduce the partially symmetric Higher-Order Power Method (PS-HOPM),  which generalizes both the Higher-Order Power Method \cite{de1995higher} and the shifted power method for symmetric tensors \cite{kolda2011shifted}.
We prove global convergence and local linear convergence of PS-HOPM in a special case.
We demonstrate through numerical experiments on noisy tensors that our overall decomposition algorithm achieves higher accuracy and faster run times than existing approaches.
One can work with noisy tensors by truncating the SVD in the first step of the algorithm.
An implementation of MSPM is available at 
\url{https://github.com/joaompereira/SPM}.

Our MSPM algorithm has four parts:
\begin{enumerate}
\item Form a tensor with last slices orthonormal:
   Given $\sT\in S^{d_1}(\RR^{m_1})\otimes\cdots\otimes S^{d_\ell}(\RR^{m_\ell})$, compute its flattening with rows of symmetry type $(f_1,\ldots,f_\ell)$, where $0\leq f_i \leq d_i$. Compute an orthonormal basis for its column space (e.g. via SVD). Stack the basis vectors to obtain a tensor $\sT^{(f)}$ whose last slices are orthonormal. See Figure~\ref{fig:flattening_pipeline}.

\item Rank-one recovery: 
    Apply PS-HOPM (Section~\ref{sec: power method}) to $\sT^{(f)}$ to compute partially symmetric singular vector tuples (pSVTs) with singular value one. Each yields a partially symmetric rank-one component $\bigotimes_{i=1}^\ell{(\fv^{(i)})}^{\otimes f_i}$.

\item Completion and deflation: 
   Construct the complementary tensor $\bigotimes_{i=1}^\ell{(\fv^{(i)})}^{\otimes (d_i-f_i)}$ (extra flattenings are needed when some $f_i=0$), estimate its scalar coefficient $\lambda$, and subtract $\lambda \bigotimes_{i=1}^\ell{(\fv^{(i)})}^{\otimes d_i}$ from the current residual tensor.  

\item Iteration until full decomposition:
   Repeat the rank-one recovery step and the completion and deflation step until all $r$ components are recovered.
\end{enumerate}

\begin{figure}[tbp]
    \centering
    \includegraphics[width=0.9\textwidth]{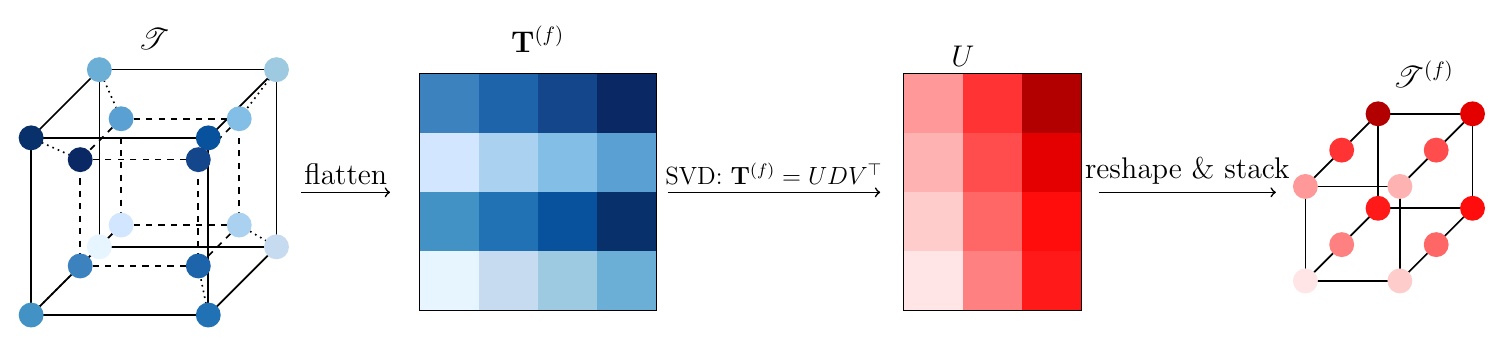} 
    \caption{
        Building a tensor with last slices orthonormal from a general tensor.
        A fourth-order tensor $\sT$ is represented as a 4D hypercube. 
        Flattening with $f=(1,1,0,0)$ combines the first two directions to be columns in the flattening matrix $\mathbf{T}^{(f)}$. 
        The SVD yields the left singular vectors $U$. 
        The columns of $U$ are reshaped into tensors and stacked to form the tensor $\sT^{(f)}$. 
    }
    \label{fig:flattening_pipeline}
\end{figure}

The rest of this paper is organized as follows.  
In Section~\ref{sec:background} we give background on partially symmetric tensors, their flattenings, and singular vector tuples. 
Section~\ref{sec:decomposition via svt} presents our first main result, showing how tensors with orthonormal slices can be decomposed using their singular vector tuples. 
Section~\ref{sec: power method} describes the \emph{Partially Symmetric Higher-Order Power Method} and establishes its global convergence and local linear convergence. 
Section~\ref{subsec: tensor transformation} provides rank bounds that guarantee uniqueness. 
Sections~\ref{subsec:completion}, \ref{subsec:deflation} develops the factor completion and deflation procedures needed for full decompositions. 
Section~\ref{sec:experiments} reports numerical experiments that compare our method with existing approaches. 
Section~\ref{sec:conclusion} concludes the paper.

\section{Background}\label{sec:background}

A partially symmetric tensor in $S^{d_1}(\mathbb{R}^{m_1}) \otimes \cdots \otimes S^{d_\ell}(\mathbb{R}^{m_\ell})$ has a decomposition
\[
\sT = \sum_{i=1}^r \lambda_i
(\fv_i^{(1)})^{\otimes d_1} \otimes (\fv_i^{(2)})^{\otimes d_2} \otimes \cdots \otimes (\fv_i^{(\ell)})^{\otimes d_\ell},
\]
where $\fv_i^{(j)}$ lies in the unit sphere $\mathbb{S}^{m_j-1}$ for $j=1,\dots,\ell$
and $r$ is sufficiently large.  
Let $\|\cdot\|$ denote the Frobenius norm (the square root of the sum of the squares of the entries).  
In practice, computing a partially symmetric decomposition amounts to solving
\[
\min_{\{\fv_i^{(j)},\lambda_i\}} \;
\left\|
\sT - \sum_{i=1}^r \lambda_i
\left(\fv_i^{(1)}\right)^{\otimes d_1} \otimes \left(\fv_i^{(2)}\right)^{\otimes d_2} \otimes \cdots \otimes \left(\fv_i^{(\ell)}\right)^{\otimes d_\ell}
\right\|.
\]

Tensor decomposition 
is challenging, both theoretically and practically. It is in general NP hard~\cite{haastad1990tensor,hillar2013most}. There are numerical challenges that arise from the fact that the set of tensors of rank at most $r$ is not closed~\cite{de2008tensor} and the  optimization landscape is non-convex~\cite{ge2017optimization}.

\begin{definition}[Contraction]
		Let $\sT$ be a tensor in  
		$S^{d_1}(\mathbb{R}^{m_1}) \otimes \cdots \otimes S^{d_\ell}(\mathbb{R}^{m_\ell})$.
		For vectors $\fv^{(i)} \in \mathbb{R}^{m_i}$ and non-negative integers $f_1\le d_1, \dots,f_\ell \le d_\ell$, the contraction of $\sT$ with 
		$\bigotimes_{i=1}^\ell \left(\fv^{(i)}\right)^{\otimes f_i}$
		produces a tensor in  
		$S^{d_1-f_1}(\mathbb{R}^{m_1}) \otimes \cdots \otimes S^{d_\ell-f_\ell}(\mathbb{R}^{m_\ell})$.
		We denote it by  
		\[ \sT \cdot \bigotimes_{i=1}^\ell \left(\fv^{(i)}\right)^{\otimes f_i} .
		\]
		Its entries are  
		\[
		\left(\sT \cdot \bigotimes_{i=1}^\ell \left(\fv^{(i)}\right)^{\otimes f_i}\right)_{\mathbf{k}_1\cdots \mathbf{k}_\ell}
		=
		\sum_{\mathbf{j}_1 \in [m_1]^{f_1}} \cdots \sum_{\mathbf{j}_\ell \in [m_\ell]^{f_\ell}}
		\sT_{(\mathbf{j}_1, \mathbf{k}_1) \cdots (\mathbf{j}_\ell, \mathbf{k}_\ell)} \prod_{i=1}^\ell (\fv^{(i)})^{\otimes f_i}_{\mathbf{j}_i},
		\]
		where $[m] = \{1,\dots,m\}$ and $\mathbf{k}_i \in [m_i]^{d_i - f_i}$.
	\end{definition}
	
	Let $\delta_{i,j}$ be $1$ if $i=j$ and $0$ otherwise.  
    For $f_i = d_i - \delta_{i,j}$, contraction yields a vector in $\RR^{m_j}$.
	
	\begin{definition}[Partially symmetric singular vector tuple (pSVT), see~\cite{friedland2014number}]
		Fix a tensor 
		\(
		\sT \in \bigotimes_{i=1}^\ell S^{d_i}(\mathbb{R}^{m_i})
		\).
		A \emph{partially symmetric singular vector tuple} is a tuple of vectors
		\[
		(\fv^{(1)}, \ldots, \fv^{(\ell)}) \in \mathbb{S}^{m_1 - 1} \times \cdots \times \mathbb{S}^{m_\ell - 1}
		\]
		such that, for $j=1,\dots,\ell$, the following equation holds:
		\begin{equation}\label{eq:svt}
			\sT \cdot \bigotimes_{i=1}^\ell \left(\fv^{(i)}\right)^{\otimes d_i - \delta_{ij}} = \sigma \fv^{(j)},
		\end{equation}
		The scalar $\sigma \in \mathbb{R}$ is called the \emph{singular value}, which can be computed explicitly as the full contraction
\(
\sigma \;=\; \sT \cdot \bigotimes_{i=1}^\ell \bigl(\fv^{(i)}\bigr)^{\otimes d_i}.
\)
    
		\end{definition}

 Throughout the paper, when we talk about pSVTs, we mean 
real vector tuples.
They specialize to usual singular vector tuples (SVTs) \cite{lim2005singular} for when $d_i = 1$ for all $i$, and to tensor eigenvectors when $\ell = 1$. 
The pSVT pairs $(\fv^{(1)},\ldots,\fv^{(\ell)},\sigma)$ of $\sT$ are the critical points of the rank-one approximation error
\[
\left\| \sT - \sigma \bigotimes_{i=1}^\ell (\fv^{(i)})^{\otimes d_i} \right\|^2,
\]
see \cite{lim2005singular, hu2018convergence,zhang2001rank}.
For matrices, singular values can always be assumed to be non-negative by flipping the sign of a
singular vector. For partially symmetric tensors with some $d_i$ odd, the same approach works to assume $\sigma \geq 0$. When all exponents $d_i$ are even, the singular value is invariant under sign flips of the vectors (just like for eigenvectors of matrices).

\begin{proposition}[{See~\cite[Theorem 12]{friedland2014number}}]
A generic partially symmetric tensor $\sT$ has finitely 
many pSVTs.
\end{proposition}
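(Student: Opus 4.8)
The plan is the standard incidence-variety dimension count. Work over $\CC$ and projectively: real pSVTs form a subset of the complex ones, and the defining equations are invariant under scaling each $\fv^{(i)}$, so it suffices to bound the number of complex projective pSVTs of a generic $\mathscr T$. Put $N=\dim_\CC\bigl(S^{d_1}(\CC^{m_1})\otimes\cdots\otimes S^{d_\ell}(\CC^{m_\ell})\bigr)=\prod_{i=1}^\ell\binom{m_i+d_i-1}{d_i}$ and $B=\PP^{m_1-1}\times\cdots\times\PP^{m_\ell-1}$, and define the incidence variety
\[
\mathcal V=\Bigl\{(\mathscr T,[\fv^{(1)}],\dots,[\fv^{(\ell)}])\in\CC^N\times B:\ \mathscr T\cdot\!\bigotimes_{i=1}^\ell(\fv^{(i)})^{\otimes d_i-\delta_{ij}}\text{ is proportional to }\fv^{(j)}\text{ for }j=1,\dots,\ell\Bigr\},
\]
cut out by the $2\times 2$ minors of $\bigl[\,\mathscr T\cdot\bigotimes_i(\fv^{(i)})^{\otimes d_i-\delta_{ij}}\ \big|\ \fv^{(j)}\,\bigr]$. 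These are bihomogeneous, so $\mathcal V$ is closed. Let $\pi\colon\mathcal V\to\CC^N$ and $q\colon\mathcal V\to B$ be the two projections; the complex projective pSVTs of $\mathscr T$ are exactly the points of $\pi^{-1}(\mathscr T)$, so it is enough to show $\pi$ has finite fibers over a dense open subset of $\CC^N$.

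First I would analyze $q$. For fixed $[\fv^{(i)}]$, the fiber $q^{-1}$ is the linear subspace of $\mathscr T$ for which, for each $j$, the contraction $\mathscr T\cdot\bigotimes_i(\fv^{(i)})^{\otimes d_i-\delta_{ij}}\in\CC^{m_j}$ lies on the line $\langle\fv^{(j)}\rangle$: that is $m_j-1$ linear conditions, for $\sum_{j=1}^\ell(m_j-1)$ in total. The crux is that these conditions are generically independent. To check this I would specialize $\fv^{(i)}=e_1$ for all $i$: by the contraction formula the $k$-th coordinate of $\mathscr T\cdot\bigotimes_i(e_1)^{\otimes d_i-\delta_{ij}}$ is precisely the entry of $\mathscr T$ having a single index equal to $k$ in the $j$-th block and all other indices equal to $1$, so the vanishing conditions (for $k=2,\dots,m_j$ and $j=1,\dots,\ell$) amount to the vanishing of $\sum_j(m_j-1)$ pairwise distinct coordinates of $\mathscr T$, hence are linearly independent. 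Therefore $q^{-1}(e_1,\dots,e_1)$ has codimension exactly $\sum_j(m_j-1)$, and by upper semicontinuity of fiber dimension the locus $B^\circ\subseteq B$ where $q$ has minimal fiber dimension $N-\sum_j(m_j-1)$ is dense open. Consequently $\overline{q^{-1}(B^\circ)}$ is an irreducible closed subset of $\mathcal V$ of dimension $\dim B+\bigl(N-\sum_j(m_j-1)\bigr)=\sum_j(m_j-1)+N-\sum_j(m_j-1)=N$, on which $q$ is dominant with generic fiber of dimension $N-\sum_j(m_j-1)$.

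Finally I would feed this into $\pi$. Since $\mathcal V$ is closed in $\CC^N\times B$ with $B$ complete, $\pi$ is proper, so the image of the closed set $\{x\in\mathcal V:\dim_x\pi^{-1}(\pi(x))\ge 1\}$ is a closed subvariety $Z\subseteq\CC^N$, and it suffices to show $Z\neq\CC^N$. If $\pi$ is dominant on the $N$-dimensional component above, then, the source and target having equal dimension $N$, its generic fiber is finite, so $Z$ is proper; if $\pi$ is not dominant there, a generic $\mathscr T$ simply has no pSVT contributed by that component. Either way a generic $\mathscr T$ has finitely many pSVTs, and intersecting with the reals gives the claim.

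\textbf{Main obstacle.} The genuine difficulty hidden above is ruling out an \emph{excess-dimensional} component of $\mathcal V$ lying over the degenerate locus $B\setminus B^\circ$ (where some $\fv^{(i)}$ is isotropic and the $\sum_j(m_j-1)$ linear conditions drop rank) that nonetheless dominates $\CC^N$; equivalently, one must control how fast the fiber dimension of $q$ can jump. I would handle this by stratifying $B$ according to the rank of the contraction-conditions and checking each stratum contributes dimension at most $N$ to $\mathcal V$. Alternatively, because $Z$ is already known to be closed, it would suffice to exhibit a single tensor with finitely many pSVTs — for instance a generic low-rank orthogonally decomposable tensor, whose pSVTs with singular value one can be listed directly — which by itself forces $Z\subsetneq\CC^N$ and finishes the argument.
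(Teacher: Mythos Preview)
The paper does not give its own proof of this proposition; it simply cites \cite[Theorem~12]{friedland2014number}. So there is nothing in the paper to compare against directly, but your incidence-variety argument is the standard route and is essentially correct.

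In fact the ``main obstacle'' you flag is not an obstacle: the $\sum_j(m_j-1)$ linear conditions on $\mathscr T$ are independent at \emph{every} point of $B$, so $B^\circ=B$ and there is no degenerate stratum. To see this, note that the linear functionals cutting out the fiber of $q$ over $([\fv^{(1)}],\ldots,[\fv^{(\ell)}])$ are represented (after dualizing) by the partially symmetric tensors obtained from $\bigotimes_i(\fv^{(i)})^{\otimes d_i}$ by replacing one copy of $\fv^{(j)}$ with some $\fw_j$ in a complement to $\langle\fv^{(j)}\rangle$ and symmetrizing within block~$j$. Together with $\bigotimes_i(\fv^{(i)})^{\otimes d_i}$ itself, these tensors span precisely the affine tangent space to the Segre--Veronese cone at that point; since the Segre--Veronese is a smooth embedding, this tangent space has dimension exactly $1+\sum_j(m_j-1)$ everywhere, so the $\sum_j(m_j-1)$ functionals are always linearly independent. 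Hence $q\colon\mathcal V\to B$ is a vector bundle of rank $N-\sum_j(m_j-1)$, $\mathcal V$ is irreducible of dimension $N$, and your properness argument for $\pi$ finishes the proof with no stratification or single-example fallback needed. (Isotropy plays no role once you are in the projective formulation: the proportionality conditions involve no quadratic form.)

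For context, the cited Friedland--Ottaviani argument packages the same structure as a vector bundle on $B$ of rank equal to $\dim B$, with each tensor $\mathscr T$ furnishing a section whose zero locus is its set of pSVTs; a generic section of such a bundle has finitely many zeros, and the top Chern class even gives their exact count. Your argument is this finiteness statement unpacked, minus the Chern-class bookkeeping.
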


A tensor can be flattened (or unfolded) to a matrix in various ways. 

\begin{definition}[Flattenings of a partially symmetric tensor]
\label{def:flattening}
Fix $\sT \in \bigotimes_{i=1}^\ell S^{d_i}(\mathbb{R}^{m_i})$, and an integer tuple $f = (f_1, \ldots, f_\ell)$ with $0 \leq f_i \leq d_i$. 
The $f$-flattening of $\sT$, denoted $\mathbf{T}^{(f)}$, is a matrix with rows and columns indexed by entries of partially symmetric tensors in
\[
\bigotimes_{i=1}^\ell S^{f_i}(\mathbb{R}^{m_i}) \quad \text{and} \quad 
\bigotimes_{i=1}^\ell S^{d_i - f_i}(\mathbb{R}^{m_i}),
\]
respectively.
When the tuple $f$ is $(d_1,\ldots,d_\ell)$, the flattening $\mathbf{T}^{(f)}$ is a vector, denoted $\operatorname{vect}(\sT)$ and called the vectorization of $\sT$.  
\end{definition}

For example, when $\sT = (\fv^{(1)})^{\otimes d_1} \otimes \cdots \otimes (\fv^{(\ell)})^{\otimes d_\ell}$, 
\[
\mathbf{T}^{(f)} = \operatorname{vect}\left( \left(\fv^{(1)}\right)^{\otimes f_1} \otimes \cdots \otimes \left(\fv^{(\ell)}\right)^{\otimes f_\ell} \right) \otimes \operatorname{vect}\left( \left(\fv^{(1)}\right)^{\otimes (d_1 - f_1)} \otimes \cdots \otimes \left(\fv^{(\ell)}\right)^{\otimes (d_\ell - f_\ell)} \right).
\]

\begin{definition}[Orthonormal slices]
\label{def:slices}
Given $\sT \in \bigotimes_{k=1}^\ell \mathbb{R}^{m_k}$, its 
$j$-{\em slices} are a collection of $m_j$ order $\ell-1$ tensors in 
\( \mathbb{R}^{m_1} \otimes \cdots \otimes \mathbb{R}^{m_{j-1}} \otimes \mathbb{R}^{m_{j+1}} \otimes \cdots \otimes \mathbb{R}^{m_\ell} \)
obtained by fixing the value of the $j$-th index of $\sT$.
We call the $\ell$-slices the \emph{last slices}. 
We say the $j$-slices of $\sT$ are \emph{orthonormal} if their vectorizations are orthonormal vectors. 
\end{definition}

We record the following linear algebra fact.
\begin{proposition}
\label{prop:orthonormalize}
    A tensor $\sT\in \bigotimes_{k=1}^\ell \RR^{m_k}$ with linearly independent last slices can be transformed to one whose last slices are orthonormal, via general linear change of basis in the last index.
\end{proposition}

\begin{proof}
Let \( \mathbf{M} \in \RR^{(m_1\cdots m_{\ell-1})\times m_\ell} \) be the flattening of \( \sT \) using the tuple~$(1,\ldots,1,0)$.
Then $\mathbf{M}$ has as columns the vectorizations of the last slices of $\sT$. The matrix $\mathbf{M}$ has full column rank $m_\ell$, by the assumption of linearly independent last slices.
Let $\mathbf{M} = QR$ be a thin QR decomposition, where $Q$ has orthonormal columns and $R$ is invertible. Then
$\mathbf{M}R^{-1} = Q.$
Applying $R^{-1}$ in the last mode of $\sT$ thus gives a tensor with orthonormal last slices.
\end{proof}

\section{A decomposition from singular vector tuples}\label{sec:decomposition via svt}

In this section, we study tensors whose slices are orthonormal. A tensor can be transformed into this form via change of basis along one index (see Proposition~\ref{prop:orthonormalize}). We show that the rank-one summands of such tensors are in one-to-one correspondence with their pSVTs, when its rank equals the number of slices. 
The pSVTs may differ from the rank-one components in their last factors.

The general linear transformation used to achieve orthonormality of the slices of the tensor transforms an associated CP decomposition. However, it will in general change the low-rank approximations of the tensor, since these are not GL-equivariant, as described in the introduction. A central numerical contribution of MSPM is that this general linear change of basis does not affect noisy best rank $r$ approximation in practice.
    The setting of this section, and of MSPM, differs from orthogonal tensor decomposition~\cite{chen2009tensor,sorensen2012canonical,wang2015orthogonal,yang2020epsilon,hu2023linear} as we do not impose orthogonality on the factors.

In the remainder of this section, therefore, we study tensors with orthonormal last slices.
We relate the CP decomposition to its pSVT.
For each pSVT with singular value one, we see that all but one of the vectors appear together in a summand of the decomposition. 

\begin{example}[A $2 \times 2 \times 2$ tensor]
\label{ex:one}
Consider the tensor 
\[
\sT = \mathbf{u} \otimes \mathbf{u} \otimes \mathbf{v} \;+\; \mathbf{v} \otimes \mathbf{v} \otimes \mathbf{u}, \quad 
\text{where}
\quad 
\mathbf{u} := \begin{pmatrix} \tfrac{1}{\sqrt{2}} \\ \tfrac{1}{\sqrt{2}} \end{pmatrix}, \quad 
\fv := \begin{pmatrix} \tfrac{1}{\sqrt{2}} \\ -\tfrac{1}{\sqrt{2}} \end{pmatrix}.
\]
The tensor $\sT$ is odeco \cite{robeva2016orthogonal,robeva2017singular}.
The third slices of $\sT$ are orthonormal.  
We compute the pSVTs of $\sT$ by solving polynomial equations~\eqref{eq:svt} in the package \texttt{HomotopyContinuation.jl} in \texttt{Julia}~\cite{breiding2018homotopycontinuation}.
The polynomial system has six complex solutions, of which four are the real pSVTs.  
Among them, the two tuples
\(
(\mathbf{u},\mathbf{u},\mathbf{v}) \) and \( (\mathbf{v},\mathbf{v},\mathbf{u}) \)
have singular value one. They are the two rank-one summands of $\sT$.  
\end{example}

\begin{example}
\label{ex:two}
    Let $\mathbf{u}$ be as in the previous example and consider
\[
\sT' = 
\mathbf{u} \otimes \mathbf{u} \otimes 
\begin{pmatrix} \tfrac{1}{\sqrt{3}} \\[0.3em] 1 \end{pmatrix}
+ 
\mathbf{e}_1 \otimes \mathbf{e}_1 \otimes 
\begin{pmatrix} -\tfrac{2}{\sqrt{3}} \\[0.3em] 0 \end{pmatrix}, \quad \text{where} \quad \fe_1 := \begin{pmatrix} 1 \\ 0 \end{pmatrix}, \fe_2 :=\begin{pmatrix} 0 \\ 1 \end{pmatrix}.
\]
The tensor $\sT'$ is \emph{not} odeco since $\fu$ is not orthogonal to $\fe_1$, but its last slices 
$$
\begin{pmatrix}
\frac{-\sqrt{3}}{2} & \frac{1}{2\sqrt{3}} \\
\frac{1}{2\sqrt{3}} & \frac{1}{2\sqrt{3}}
\end{pmatrix}, \quad
\begin{pmatrix}
\frac{1}{2} & \frac{1}{2}\\
\frac{1}{2} & \frac{1}{2}
\end{pmatrix}
$$
are orthonormal. 
The decomposition is unique, by 
Kruskal's criterion \cite{kruskal1977three}. 

We compute the pSVTs of $\sT'$ by solving polynomial equations~\eqref{eq:svt}.
This system has six complex solutions and all of them are real.
Two have singular value one:
\[
(\mathbf{e}_1,\mathbf{e}_1,\begin{pmatrix} -\tfrac{\sqrt{3}}{2} \\[0.3em] \tfrac{1}{2} \end{pmatrix}), 
\quad (\mathbf{u},\mathbf{u},\mathbf{e}_2).
\]
The first two factors of each pSVT, $\mathbf{e}_1 \otimes \mathbf{e}_1$ and $\mathbf{u} \otimes \mathbf{u}$, appear in the decomposition of $\sT'$. The third vectors in the pSVTs do not match the decomposition factors. 
\end{example}

We next prove the one-to-one correspondence, observed in Examples~\ref{ex:one} and~\ref{ex:two}, between pSVTs with singular value one and the rank-one summands in the decomposition, for a tensor with orthonormal slices.
Partially symmetric SVTs with singular value one exist tensors whose slices are orthonormal and whose rank equals the number of orthonormal slices.
The problem of decomposing tensors without orthonormal slices and of higher rank, will be addressed in the  subsequent sections.
First we recall a related result for matrices.

\begin{lemma}\label{lem: matrix with orthonormal columns}
Fix $m \geq n$. Let $\mathbf{M}\in \RR^{m\times n}$ be a matrix with orthonormal columns. The singular values of $\bM$ are 0 or 1. A vector $\fu\in \SB^{m-1}$ is a left singular vector of $\mathbf{M}$ with singular value one if and only if it lies in the column span of $\mathbf{M}$.
\end{lemma}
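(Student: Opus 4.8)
The statement is a basic linear-algebra fact, so the proof plan is short. I would prove it in three steps.

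\textbf{Step 1: the singular values of $\bM$ are $0$ or $1$.} Since $\bM$ has orthonormal columns, $\bM\T\bM = \Id_n$. The squared singular values of $\bM$ are the eigenvalues of $\bM\T\bM$ (an $n\times n$ matrix) together with $m-n$ extra zeros coming from the fact that $\bM\bM\T$ has rank $n$. Because $\bM\T\bM=\Id_n$ has all eigenvalues equal to $1$, every nonzero singular value of $\bM$ equals $1$, and the remaining singular values are $0$. Equivalently, for any $\fx\in\RR^n$ we have $\|\bM\fx\|^2 = \fx\T\bM\T\bM\fx = \|\fx\|^2$, so $\bM$ is an isometry onto its column span; hence on the orthogonal complement of its column span the adjoint action is zero, giving singular values $1$ (multiplicity $n$) and $0$ (multiplicity $m-n$).

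\textbf{Step 2: column span $\Rightarrow$ left singular vector with singular value one.} Suppose $\fu\in\SB^{m-1}$ lies in the column span of $\bM$, so $\fu=\bM\fx$ for some $\fx\in\RR^n$; then $1=\|\fu\|^2=\|\bM\fx\|^2=\|\fx\|^2$ by Step 1's isometry computation. The left singular vector equation with singular value one is $\bM\bM\T\fu=\fu$. Now $\bM\T\fu=\bM\T\bM\fx=\fx$, so $\bM\bM\T\fu=\bM\fx=\fu$, as required. (One also checks the paired right singular vector is $\fx$: $\bM\fx=\fu$ and $\bM\T\fu=\fx$.)

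\textbf{Step 3: left singular vector with singular value one $\Rightarrow$ column span.} If $\fu\in\SB^{m-1}$ satisfies $\bM\bM\T\fu=\fu$, then $\fu=\bM(\bM\T\fu)$ is visibly in the column span of $\bM$. Combining Steps 2 and 3 gives the claimed equivalence.

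\textbf{Main obstacle.} There is essentially no obstacle here; the only thing to be slightly careful about is bookkeeping the multiplicities in Step 1 (the $m-n$ zero singular values) and making sure the two implications in the equivalence are each genuinely short, as above. The content of the lemma is simply that a matrix with orthonormal columns is a partial isometry whose initial space is its column span.
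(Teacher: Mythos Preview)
Your proposal is correct and follows essentially the same approach as the paper: both use $\bM\T\bM=\Id_n$ to conclude all nonzero singular values equal one, and both verify the equivalence by showing $\bM\bM\T$ acts as the identity on the column span (the paper phrases this via singular pairs $(\fu,\fv)$ with $\fv=\bM\T\fu$, while you phrase it via the eigenvector equation $\bM\bM\T\fu=\fu$, but these are the same computation).
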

\begin{proof}
We have $\mathbf{M}\T \mathbf{M} = \mathbf{I}_n$, 
since $\mathbf{M}$ has orthonormal columns. Hence, the eigenvalues of $\mathbf{M}\T \mathbf{M}$ are all one, so the nonzero singular values of $\mathbf{M}$ are all one.
Suppose $(\fu,\fv)$ is a singular vector pair of $\mathbf{M}$ with singular value one. It follows that $\fu = \bM \fv$ so $\fu\in \colspan \bM$.
Conversely, let $\fu \in \colspan(\mathbf{M})$ be a unit vector. Define $\fv = \bM\T \fu$. 
Using that $\bM\bM\T$ is orthogonal projection onto $\colspan \bM$, we have 
$\bM\fv = \bM \bM\T \fu = \fu$ and $\|\fv\|^2 = \fu\T \bM \bM\T \fu = \| \fu\|^2 = 1$. So $(\fu,\fv)$ is a singular vector pair with singular value one.
\end{proof}

\begin{theorem}\label{thm:decomp and singular vector}
Fix 
\(
\sT \in 
\Bigl( \bigotimes_{i=1}^\ell S^{d_i}(\mathbb{R}^{m_i}) \Bigr) \otimes \mathbb{R}^m.
\)
Let \(
\Acal \subseteq \bigotimes_{i=1}^\ell S^{d_i}(\mathbb{R}^{m_i})
\)
be the span of the last slices of $\sT$, which we assume to be orthonormal.
Then:
\begin{enumerate}
    \item[(i)] The pSVTs of $\sT$ have singular values in  $[0,1]$. The rank-one tensors in $\bigotimes_{i=1}^\ell S^{d_i}(\mathbb{R}^{m_i})$ corresponding to pSVTs with singular value one are the rank-one tensors in $\Acal$. 
    \item[(ii)] Supposing further that \( \sT \) has rank \( m \) and decomposition
    \begin{equation}\label{eq:tensordecomp}
\sT = \sum_{i=1}^m \lambda_i \, \bigl(\fv^{(1)}_i\bigr)^{\otimes d_1} \otimes \cdots \otimes \bigl(\fv^{(\ell)}_i\bigr)^{\otimes d_\ell} \otimes \fu_i,
\end{equation}
then for each summand in the decomposition  there exists $\fw_i \in \mathbb{S}^{m-1}$ such that the tuple
\(
(\fv^{(1)}_i, \ldots, \fv^{(\ell)}_i, \fw_i)
\)
is a pSVT of $\sT$ with singular value one. A linear transformation computed from $\sT$ maps $\{\fw_i\}_{i=1}^m$ to $\{\fu_i\}_{i=1}^m$. 
\end{enumerate}
\end{theorem}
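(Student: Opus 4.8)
The plan is to reduce the partially symmetric statement to the matrix case via the $f$-flattening with $f=(d_1,\dots,d_\ell,0)$, which puts all the partially symmetric directions into the rows and the last $\RR^m$ direction into the columns. Write $\mathbf{T} := \mathscr{T}^{(f)}$ for this flattening: it is a matrix whose columns are indexed by $[m]$ and whose $k$-th column is $\Vect$ of the $k$-th last slice of $\mathscr{T}$. By hypothesis the last slices are orthonormal, so $\mathbf{T}$ has orthonormal columns, and Lemma~\ref{lem: matrix with orthonormal columns} applies: the singular values of $\mathbf{T}$ are $0$ or $1$, and a unit vector $\fw\in\SB^{m-1}$ is a right singular vector of $\mathbf{T}$ with singular value one if and only if $\fw\in\colspan(\mathbf{T}\T)$, equivalently $\mathbf{T}\fw$ has unit norm and lies in the row space. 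The key dictionary step is to check that a tuple $(\fv^{(1)},\dots,\fv^{(\ell)},\fw)$ is a pSVT of $\mathscr{T}$ with singular value $\sigma$ precisely when $\fw$ and the symmetric vector $\fp := \Vect\bigl(\bigotimes_i (\fv^{(i)})^{\otimes d_i}\bigr)$ form a (left,right) singular pair of $\mathbf{T}$ with singular value $\sigma$ \emph{and} the rank-one structure is self-consistent in the symmetric block — i.e. contracting $\mathscr{T}$ against $\fw$ in the last slot returns $\sigma$ times the rank-one partially symmetric tensor $\bigotimes_i (\fv^{(i)})^{\otimes d_i}$. This uses that the defining equations~\eqref{eq:svt} for $j=1,\dots,\ell$ say exactly that $\fv^{(j)}$ is the (suitably normalized) singular-vector condition for the symmetric directions, while the $j=\ell$ equation is $\mathbf{T}\T\fp = \sigma\fw$ after identifying $\Vect(\mathscr{T}\cdot\fw)$ with $\mathbf{T}\fw$.

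With this dictionary in place the first claim follows in two parts. For the singular-value bound: given any pSVT, the last-slot equation gives $\mathbf{T}\fw = \sigma\,(\text{something of norm }\le 1)$ — more precisely $\sigma = \fp\T\mathbf{T}\fw = \fw\T\mathbf{T}\T\mathbf{T}\fw \le \|\fw\|^2 = 1$ since $\mathbf{T}\T\mathbf{T}=\Id$, and $\sigma\ge 0$ can be arranged (or, when all $d_i$ even, $\sigma$ is a genuine eigenvalue of a PSD contraction and again lies in $[0,1]$); I would phrase this so it does not depend on the sign convention. For the characterization of singular value one: if $\sigma=1$ then $\fw\in\colspan(\mathbf{T}\T)$ by the matrix lemma, and the self-consistency of the contraction forces the partially symmetric tensor $\bigotimes_i(\fv^{(i)})^{\otimes d_i}$ to equal $\mathscr{T}\cdot\fw$, which is a vector in $\Acal$ (a slice-combination); hence the rank-one tensor attached to the pSVT lies in $\Acal$. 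Conversely, given a rank-one tensor $\bigotimes_i(\fv^{(i)})^{\otimes d_i}\in\Acal$, its vectorization $\fp$ lies in $\colspan(\mathbf{T})$, so by the matrix lemma $(\fp,\fw)$ with $\fw:=\mathbf{T}\T\fp$ is a unit singular pair with singular value one; one then checks the symmetric-direction equations~\eqref{eq:svt} hold automatically because $\mathbf{T}\fw = \mathbf{T}\mathbf{T}\T\fp = \fp$ reconstructs the rank-one tensor exactly, and contracting further against the $\fv^{(i)}$'s in all but one slot reproduces $\sigma\fv^{(j)}$ with $\sigma=1$.

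For the second part, assume the rank equals $r=\dim\Acal$ with decomposition~\eqref{eq:tensordecomp}. Each rank-one tensor $\bigl(\fv^{(1)}_i\bigr)^{\otimes d_1}\otimes\cdots\otimes\bigl(\fv^{(\ell)}_i\bigr)^{\otimes d_\ell}$ is a partial-contraction slice of $\mathscr{T}$ against some vector dual to $\fu_i$: since the $\fu_i$ span an $r$-dimensional space and the $r$ rank-one partially symmetric tensors $\fp_i$ are linearly independent (their span is $\Acal$, which is $r$-dimensional), each $\fp_i$ lies in $\colspan(\mathbf{T})=\Acal$, so the previous paragraph produces $\fw_i:=\mathbf{T}\T\fp_i/\|\mathbf{T}\T\fp_i\|\in\SB^{m-1}$ making $(\fv^{(1)}_i,\dots,\fv^{(\ell)}_i,\fw_i)$ a pSVT with singular value one. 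Finally, the linear transformation: because $\mathbf{T}$ has orthonormal columns, $\mathbf{T}\mathbf{T}\T$ is the orthogonal projection onto $\Acal$, and $\mathbf{T}\T$ restricted to $\Acal$ is an isometry onto $\colspan(\mathbf{T}\T)$ with inverse $\mathbf{T}$. I would construct from $\mathscr{T}$ the matrix whose $k$-th column is $\mathscr{T}\cdot\fw$ evaluated along a basis — concretely, the map $\fw\mapsto$ (the coordinate vector of $\mathscr{T}\cdot\bigotimes\!\cdots$) — and exhibit a fixed linear map $L$, read off from the entries of $\mathscr{T}$, with $L\fw_i = \fu_i$ for all $i$ simultaneously; the natural candidate is $L = \mathbf{T}^{+}$-type pseudoinverse composed with the original flattening $\mathbf{T}_0$ of $\mathscr{T}$ with rows $\RR^m$ and columns the symmetric block, so that $L\fw_i$ recovers $\lambda_i^{-1}$-scaled or normalized $\fu_i$. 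The main obstacle I anticipate is precisely this last sentence: pinning down the correct normalization so that a single linear map (not depending on $i$) sends every $\fw_i$ to $\fu_i$, and handling the case where the coefficients $\lambda_i$ enter — one must be careful that the decomposition~\eqref{eq:tensordecomp} absorbs $\lambda_i$ into $\fu_i$ or keeps it separate consistently, and that $\mathbf{T}\T\fp_i$ is genuinely nonzero (guaranteed since $\fp_i\in\colspan(\mathbf{T})$ is a unit vector and $\mathbf{T}\T$ is an isometry there). Establishing the dictionary between~\eqref{eq:svt} and the matrix singular pair, with all contractions written out once and for all, is the technical heart but is otherwise routine bookkeeping.
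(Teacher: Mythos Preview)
Your overall strategy matches the paper's exactly: flatten via $f=(d_1,\dots,d_\ell,0)$ to get a matrix $\mathbf{T}$ with orthonormal columns, then transfer the statement through Lemma~\ref{lem: matrix with orthonormal columns}. The converse direction (rank-one in $\Acal$ $\Rightarrow$ pSVT with $\sigma=1$) is correct and essentially identical to the paper's argument. Two steps, however, are genuinely wrong or missing.

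\textbf{The singular-value bound.} You write $\sigma = \fp\T\mathbf{T}\fw = \fw\T\mathbf{T}\T\mathbf{T}\fw \le \|\fw\|^2 = 1$. The second equality requires $\fp = \mathbf{T}\fw$, which is precisely what you do not know for a general pSVT (and is false when $\sigma<1$). Worse, since $\mathbf{T}\T\mathbf{T}=\Id_m$, your chain would give $\sigma=1$ for \emph{every} pSVT. The correct bound is the one-liner the paper uses: $|\sigma| = |\fp\T\mathbf{T}\fw| \le \|\mathbf{T}\|_2 = 1$. Relatedly, in the direction ``$\sigma=1 \Rightarrow \sR_s\in\Acal$'' you invoke the matrix lemma to conclude $\fw\in\colspan(\mathbf{T}\T)$, but this is vacuous: $\mathbf{T}\T$ is onto $\RR^m$. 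What the lemma actually gives is $\fp\in\colspan(\mathbf{T})=\Acal$, applied to the \emph{left} singular vector. To extract this from the pSVT equations, note that the last-slot equation reads $\mathbf{T}\T\fp = \fw$ when $\sigma=1$, so $1=\|\fw\|^2=\fp\T\mathbf{T}\mathbf{T}\T\fp\le\|\fp\|^2=1$; equality forces $\fp\in\colspan(\mathbf{T})$, and then $\mathbf{T}\fw=\mathbf{T}\mathbf{T}\T\fp=\fp$ follows. Your claim that ``self-consistency of the contraction forces $\bigotimes_i(\fv^{(i)})^{\otimes d_i}=\mathscr{T}\cdot\fw$'' is not justified without this step: the symmetric-block pSVT equations only say $(\fv^{(1)},\dots,\fv^{(\ell)})$ is a pSVT of $\mathscr{T}\cdot\fw$, not that $\mathscr{T}\cdot\fw$ is rank one.

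\textbf{The linear transformation.} You correctly flag this as the obstacle, but the proposed ``$\mathbf{T}^+$-type pseudoinverse'' does not work and you do not produce a single map valid for all $i$. The paper's construction is explicit: with $\bM_\sR$ the matrix whose rows are $\Vect(\sR^{(i)})$, $\mathbf{U}$ the matrix with rows $\fu_i$, $\mathbf{W}$ the matrix with rows $\fw_i$, and $\Lambda=\Diag(\lambda_i)$, one has $\mathbf{T}=\bM_\sR\T\Lambda\mathbf{U}$ from the decomposition and $\mathbf{W}=\bM_\sR\mathbf{T}$ from $\fw_i=\mathbf{T}\T\Vect(\sR^{(i)})$. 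Combining gives $\mathbf{U}=\Lambda^{-1}(\bM_\sR\bM_\sR\T)^{-1}\mathbf{W}$, which is the promised linear relation. Note that it depends on the recovered $\sR^{(i)}$ and $\lambda_i$, not just on a flattening of $\mathscr{T}$ alone; your pseudoinverse ansatz cannot see this.
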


\begin{proof}
(i) Let \( \mathbf{M} \) denote the flattening of \( \sT \) using the tuple~$(d_1,\ldots,d_\ell,0)$.
It has $M = \prod_{i=1}^\ell m_i^{d_i}$ rows and $m$ columns. The columns are orthonormal, since the last slices of $\sT$ are orthonormal. 
Their span is $\Acal$.
Let $(\fv^{(1)}, \ldots, \fv^{(\ell)}, \fw)$ be a pSVT of $\sT$ with singular value $\sigma$. We can assume $\sigma$ is non-negative, after changing the sign of $\fw$ if necessary.

The maximum singular value of $\sT$ is the maximum value of 
$\sT \cdot \bigotimes_{i=1}^\ell (\fv^{(i)})^{\otimes d_i}\otimes \fu$ 
over $\fv^{(i)}\in \SB^{m_i-1}$ and $\fu\in \SB^{m-1}$. 
Define  $\sR = \bigotimes_{i=1}^\ell (\fv^{(i)})^{\otimes d_i}$.
Then $\Vect(\sR)\in \SB^{M-1}$ and 
$$\sT \cdot \bigotimes_{i=1}^\ell (\fv^{(i)})^{\otimes d_i}\otimes \fu = \Vect(\sR)\T \bM \fu.$$ 
The matrix $\bM$ has maximum singular value one, by Lemma \ref{lem: matrix with orthonormal columns}.  Thus, the singular values of $\sT$ lie in $[0,1]$. 

Suppose $(\fv^{(1)}, \ldots, \fv^{(\ell)}, \fw)$ is a pSVT with singular value one. 
Then $(\Vect(\sR), \fw)$ is a singular vector pair of $\bM$ with singular value one. We obtain $\Vect(\sR) = \bM \fw\in \colspan(\bM)$, by Lemma \ref{lem: matrix with orthonormal columns}. Hence $\sR\in \Acal$. 
For the converse, suppose $\sR\in \Acal$. Then $(\Vect(\sR),\fw)$ is a singular vector pair of $\bM$ with singular value one, again by Lemma \ref{lem: matrix with orthonormal columns}.
This implies 
\[
\sT(*,\ldots,*,\fw)=\sR, \quad \text{and} \quad \sT \cdot \sR = \fw.
\]
Therefore, $(\fv^{(1)},\ldots,\fv^{(\ell)},\fw)$ is a pSVT with singular value one, since $(\fv^{(1)},\ldots,\fv^{(\ell)})$ is a pSVT of $\sR_s$ and $\fw = \sT\cdot \bigotimes_{i=1}^\ell (\fv^{(i)})^{\otimes d_i}$. This concludes the proof of (i).

(ii) Suppose $\rank\sT=m$ and that \eqref{eq:tensordecomp} holds. 
We have 
$\sT = \sum_{i=1}^m \lambda_i \,\sR^{(i)} \otimes \fu_i$, where 
$\sR^{(i)} = \bigotimes_{j=1}^\ell (\fv_i^{(j)})^{\otimes d_j}$. 
The vectors \( \operatorname{vect}(\sR^{(1)}), \ldots, \operatorname{vect}(\sR^{(m)})\) are a basis of the linear space $\Acal$, since $\rank \sT = m$. 
Hence 
$\sR^{(i)}\in \Acal$ and 
$(\fv^{(1)}_i,\ldots,\fv^{(\ell)}_i,\fw_i)$ is a pSVT with singular value one, where $\fw_i = \sT\cdot \sR^{(i)}$.

We now relate $\{\fu_i\}_{i=1}^m$ and $\{\fw_i\}_{i=1}^m$.
Let $\mathbf{U},\mathbf{W}\in \RR^{m\times m}$ be $\mathbf{U} = \begin{pmatrix}
-\fu_1- \\ \vdots \\ -\fu_m-
\end{pmatrix}, \mathbf{W} = \begin{pmatrix}
-\fw_1- \\ \vdots \\ -\fw_m-
\end{pmatrix}$. 
The decomposition of $\sT$ yields a decomposition of its flattening
$$\mathbf{M}=\sum_{i=1}^m \lambda_i \,\Vect(\sR^{(i)})\otimes \fu_i
= \mathbf{M}_{\sR}\T\,\mathbf{\Lambda}\,\mathbf{U}\in \RR^{M\times m},$$
where 
$\Lambda = \Diag(\lambda_1,\ldots,\lambda_m)$ and $\bM_\sR = \begin{pmatrix}
-\Vect(\sR^{(1)})- \\
\vdots \\
-\Vect(\sR^{(m)})-
\end{pmatrix}\in \RR^{m\times M}$.
By construction, we have $\sT \cdot \sR^{(i)} = \fw_i$, so 
$\mathbf{M}_\sR \mathbf{M} = \mathbf{W}$.
Combining with $\mathbf{M}=\mathbf{M}_{\sR}\T\mathbf{\Lambda} \mathbf{U}$ yields
\[
\mathbf{W} = \mathbf{M}_{\sR}\mathbf{M}_{\sR} \T\,\mathbf{\Lambda}\,\mathbf{U},
\quad\text{so}\quad
\mathbf{U} = \mathbf{\Lambda}^{-1}\bigl(\mathbf{M}_{\sR} \mathbf{M}_{\sR}\T\bigr)^{-1}\mathbf{W}. \qedhere
\] 
\end{proof}

\begin{remark}
The proof of Theorem~\ref{thm:decomp and singular vector} implies a  stronger statement: an SVT of $\sT$ with singular value one is a pSVT of $\sT$ (i.e., an SVT with the partial symmetry of $\sT$).
\end{remark}

\begin{remark}
In the special case that the tensors $\{ \sR^{(i)} \}_{i=1}^m$ are orthonormal,
the decomposition of $\sT$ consists of rank-one components that are pSVTs, since $\mathbf{M}_{\sR} \mathbf{M}_{\sR}\T=\mathbf{I}$ and thus $\mathbf{U}=\mathbf{W}, \mathbf{\Lambda}=\mathbf{I}$.  
The columns of $\bM$ form an orthonormal basis of $\Acal$, so
the matrix $\mathbf{M}\mathbf{M}\T$ is the projection matrix onto $\Acal$ and $\mathbf{M}\mathbf{M}\T\bM_{\sR}\T = \bM_{\sR}\T$.
The vectors $\{\fw_1,\ldots,\fw_m\}$ are also orthonormal since 
$\mathbf{W}\mathbf{W}\T = \mathbf{M}_\sR \mathbf{M}\mathbf{M}\T\mathbf{M}_\sR\T = \mathbf{M}_\sR \bM_\sR\T= \mathbf{I}$. 
\end{remark}

\begin{remark}Theorem~\ref{thm:decomp and singular vector} requires that the slices of $\sT$ are orthonormal, it is not enough to be orthogonal, since the bound on the singular values would break. For Theorem~\ref{thm:decomp and singular vector}(ii), if the rank exceeds the number of slices, we still have that a tensor in $\mathcal{A}$ can be written as a linear combination of the rank one tensors $\sR^{(i)}$, but lose the containment $\sR^{(i)} \in \mathcal{A}$, cf.~\cite{wang2023lower}. The rank is at least $m$, the number of slices, since the slices are orthonormal hence linearly independent, and the rank of $\sT$ is at least the rank of its flattening.
\end{remark}

\section{A partially symmetric power method}\label{sec: power method}

In this section we consider a power method for computing pSVTs of  partially symmetric tensors, which unifies two classical algorithms: the Higher-Order Power Method (HOPM)~\cite{de1995higher} and the shifted power method for symmetric tensors~\cite{kolda2011shifted}.  
We call it the \emph{partially symmetric higher order power method} (PS-HOPM). 
PS-HOPM is described in Algorithm~\ref{alg:psspm}, with the implementation of power method iteration described in Algorithm \ref{alg:pmi_eff}. 
It is applicable to general partially symmetric tensors, and may be of independent interest.

\begin{figure}[htpb]
   \renewcommand\theContinuedFloat{\alph{ContinuedFloat}} 
	\algrenewcommand\algorithmicindent{0.65em}
	\begin{minipage}[t]{.47\textwidth}
	\begin{algorithm}[H]
	\caption{Partially Symmetric Higher Order Power Method (PS-HOPM)}
	\label{alg:psspm}
	\begin{algorithmic}
	\Require Partially symmetric tensor 
	$$
	\sT \in \bigotimes_{i=1}^\ell S^{a_i}(\mathbb{R}^{m_i})$$
	\Require Shift parameters $\gamma_1,\ldots,\gamma_\ell \geq 0$, stopping criterion
	\Ensure A pSVT $\fv^{(1\,:\,\ell)}=(\fv^{(1)},\ldots,\fv^{(\ell)})$ with singular value $\sigma$ \bigskip
	
	\For{$i = 1,\ldots,\ell$}
	  \State $\fv^{(i)} \gets$ random unit vector in $\mathbb{S}^{m_i - 1}$
	\EndFor
	\Repeat
	\State $\displaystyle \fv^{(1\,:\,\ell)} \gets \text{\textsc{PMI}}(\sT; \fv^{(1\,:\,\ell)})$

	\Until{stopping criterion is met}
	
	\State $\sigma \gets 
	  \sT \cdot \bigotimes_{i=1}^\ell \bigl(\fv^{(i)}\bigr)^{\otimes a_i}$
	\end{algorithmic}
	\end{algorithm}
	\end{minipage}
	\hfill
	\begin{minipage}[t]{.51\textwidth}
	\begin{algorithm}[H]
		\ContinuedFloat 
		\caption{Power Method Iteration (PMI)}\label{alg:pmi_eff}
		\begin{algorithmic}
			\Require Partially symmetric tensor 
			\[
			\sT \in \bigotimes_{i=1}^\ell S^{a_i}(\mathbb{R}^{m_i})
			\]
			\Require Vectors $\fv^{(1\,:\,\ell)} = (\fv^{(1)},\ldots,\fv^{(\ell)})$ with unit norm
			\Require Shift parameters $\gamma_1,\ldots,\gamma_\ell \geq 0$
			\Ensure Vectors $\fw^{(1\,:\,\ell)} = (\fw^{(1)},\ldots,\fw^{(\ell)})$ satisfying \eqref{eq:PMI}
			\bigskip
			
			\If{$\ell = 1$}
			\State $\tilde{\fv}^{(1)} \gets 
			\sT\cdot \bigl(\fv^{(1)}\bigr)^{\otimes a_1-1}
			\;+\; \gamma_i \fv^{(1)}$
			\If{$\|\tilde{\fv}^{(1)}\| > 0$}
			\State $\fv^{(1)} \gets \tilde{\fv}^{(1)} / \|\tilde{\fv}^{(1)}\|$
			\EndIf
			\Else
			\State $k\gets \lfloor\frac\ell2\rfloor$
			\State $\mathbscr{U}\gets \sT\cdot \textstyle \bigotimes_{j=k+1}^\ell \bigl(\fv^{(j)}\bigr)^{\otimes a_j}$
			\State 
			$\fw^{(1\,:\,k)} \gets \text{\text{PMI}}(\mathbscr{U}; \fv^{(1\,:\,k)})$
			\State $\mathbscr{W}\gets \sT\cdot \textstyle \bigotimes_{j=1}^k \bigl(\fv^{(j)}\bigr)^{\otimes a_j}$
			\State
			$\fw^{(k+1\,:\,\ell)} \gets \text{\text{PMI}}(\mathbscr{W};  \fv^{(k+1\,:\,\ell)})$
			\EndIf
		\end{algorithmic}
	\end{algorithm}
\end{minipage}
\end{figure}

In the fully asymmetric case, with $\gamma_1=\cdots=\gamma_\ell=0$, Algorithm~\ref{alg:psspm} reduces to the classical HOPM, known to converge globally to a second-order critical point of $(\fv_1,\ldots,\fv_\ell)\mapsto \sT \cdot \bigotimes_{i=1}^\ell \fv_i^{a_i}$, see~\cite{uschmajew2014new}.  
In the fully symmetric case, it specializes to the shifted power method, which also has global convergence to a second-order critical point~\cite{kileel2025subspace}. 
A related work in the partially symmetric setting is the recent paper~\cite{ni2025alternating}, whose alternating algorithm is mathematically equivalent to the update scheme in Algorithm \ref{alg:pmi_eff}. 
The authors show monotonic increase of the function values, but do not address convergence of the iterates, the choice of shifts, or local convergence rates. We address these three aspects in the following subsections.

Mathematically, Algorithm \ref{alg:pmi_eff} is equivalent to the partially symmetric shifted power method iteration, defined as follows
\begin{equation}\label{eq:PMI}
\fw^{(i)} =\frac{\sT\cdot \bigotimes_{j=1}^\ell \bigl(\fv^{(j)}\bigr)^{\otimes a_j -\delta_{i,j}}}{\left\|\sT\cdot \bigotimes_{j=1}^\ell \bigl(\fv^{(j)}\bigr)^{\otimes a_j -\delta_{i,j}}\right\|},\quad i=1,\dots,\ell
\end{equation}
Evaluating \eqref{eq:PMI} directly involves calculating $\ell$ tensor contractions with $\sT$, which requires $\ell M$ total multiplications, where $M= \prod_{i=1}^\ell m_i^{a_i}$ is the number of entries of $\sT$. Alternatively, our implementation uses the fact that, if $i\le k$, $\sT\cdot \bigotimes_{j=1}^\ell \bigl(\fv^{(j)}\bigr)^{\otimes a_j -\delta_{i,j}} = \mathbscr{U}\cdot \bigotimes_{j=1}^k \bigl(\fv^{(j)}\bigr)^{\otimes a_j -\delta_{i,j}}$, with $\mathbscr{U}$ defined as in Algorithm~\ref{alg:pmi_eff}. By pre-calculating $\mathbscr{U}$ and $\mathbscr{W}$, we reduce the computational cost from $\ell M$ to $2 M + O(M_1) + O(M_2)$, where $M_1= \prod_{i=1}^k m_i^{a_i}$ and
$M_2= \prod_{i=k+1}^\ell m_i^{a_i}$ are the number of entries of $\mathbscr{W}$ and $\mathbscr{U}$, respectively. This results in a computational speed-up of approximately $\ell/2$. Similar techniques for speeding up computations, relying on pre-calculating quantities that are shared between modes, have been proposed in the context of CP tensor decomposition \cite{phan2013fast,kaya2019computing}.

\subsection{Global convergence}

We show that Algorithm \ref{alg:psspm} has global convergence to a pSVT for suitable shifts $\gamma_1,\ldots,\gamma_\ell$. 
We refer to the variables at one symmetric factor $S^{a_i}(\RR^{m_i})$ as a block. A block update means we update the vector  in one block, fixing  the other factors. Let $\| \cdot \|_2$ denote the spectral norm of a tensor, the largest absolute value of its singular values.

\begin{theorem}\label{thm:power_method_convergence}
Fix 
$\sT \in \bigotimes_{i=1}^\ell S^{a_i}(\mathbb{R}^{m_i}).$
Define the function 
$F: \SB^{m_1-1}\times \cdots \times \SB^{m_\ell-1} \to  \RR$
by
\begin{equation}\label{eq:def_F}
F(\fv^{(1)},\ldots,\fv^{(\ell)}) 
= \Bigl\langle \sT, \bigotimes_{i=1}^\ell \bigl(\fv^{(i)}\bigr)^{\otimes a_i} \Bigr\rangle
+ \sum_{i=1}^\ell \gamma_i \|\fv^{(i)}\|^{a_i}.
\end{equation}
Then the pSVTs of $\sT$ are the first-order critical points of $F$. 
There exists $\gamma_i \geq 0$ such that 
the restriction of $F$ to the $i$-th block 
is convex over $\RR^{m_i}$ for all values of the other blocks.  
A sufficient choice is 
$\gamma_i \geq (a_i-1)\|\sT\|_2$.
If all block restrictions are convex over $\RR^{m_i}$, then Algorithm \ref{alg:psspm} applied to $\sT$ converges globally to a pSVT of $\sT$. 
The convergence rate is at least algebraic.
\end{theorem}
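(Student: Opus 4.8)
The plan is to establish the four assertions of Theorem~\ref{thm:power_method_convergence} in sequence.

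\textbf{Step 1: critical points are pSVTs.} On the product of spheres $M:=\SB^{m_1-1}\times\cdots\times\SB^{m_\ell-1}$ the shift term of~\eqref{eq:def_F} equals the constant $\sum_i\gamma_i$, so the first-order critical points of $F|_M$ are those of $(\fv^{(1)},\ldots,\fv^{(\ell)})\mapsto\langle\sT,\bigotimes_i(\fv^{(i)})^{\otimes a_i}\rangle$. As a polynomial in the block-$j$ variable alone, the other factors fixed, this is $\sS(\fx^{\otimes a_j})$ with $\sS:=\sT\cdot\bigotimes_{i\ne j}(\fv^{(i)})^{\otimes a_i}\in S^{a_j}(\RR^{m_j})$; its Euclidean gradient in block $j$ is $a_j\,\sT\cdot\bigotimes_i(\fv^{(i)})^{\otimes a_i-\delta_{ij}}$. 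Requiring the tangential part of the gradient to vanish in every block gives $\sT\cdot\bigotimes_i(\fv^{(i)})^{\otimes a_i-\delta_{ij}}=\sigma_j\fv^{(j)}$ for each $j$, and pairing with $\fv^{(j)}$ shows $\sigma_j=\sT\cdot\bigotimes_i(\fv^{(i)})^{\otimes a_i}$, the same scalar for all $j$. Hence the first-order critical points of $F$ are exactly the pSVTs of $\sT$.

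\textbf{Step 2: the shift convexifies each block.} Fix block $j$ and the other factors; up to an additive constant the restriction is $\tilde g(\fx)=\sS(\fx^{\otimes a_j})+\gamma_j\|\fx\|^{a_j}$ on $\RR^{m_j}$. If $a_j=1$ this is linear plus $\gamma_j\|\fx\|$, hence convex for every $\gamma_j\ge 0=(a_j-1)\|\sT\|_2$. If $a_j\ge 2$, I would use $\nabla^2\bigl(\sS(\fx^{\otimes a_j})\bigr)=a_j(a_j-1)\,\sS(\,\cdot\,,\,\cdot\,,\fx^{\otimes a_j-2})$, so its spectral norm equals $a_j(a_j-1)\max_{\|\fu\|=1}|\sS(\fx^{\otimes a_j-2}\otimes\fu^{\otimes2})|\le a_j(a_j-1)\|\sS\|_2\|\fx\|^{a_j-2}$, the last step using the equality between the multilinear and diagonal spectral norms of a symmetric tensor (Banach's theorem). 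Since $\sS$ is a contraction of $\sT$ with unit vectors, $\|\sS\|_2\le\|\sT\|_2$. Combining this with the elementary bound $\nabla^2(\|\fx\|^{a_j})\succeq a_j\|\fx\|^{a_j-2}\Id$ (valid for $a_j\ge 2$) yields $\nabla^2\tilde g(\fx)\succeq a_j\|\fx\|^{a_j-2}\bigl(\gamma_j-(a_j-1)\|\sT\|_2\bigr)\Id$, positive semidefinite as soon as $\gamma_j\ge(a_j-1)\|\sT\|_2$ (the origin is covered by continuity, or directly when $a_j=2$). In particular such shifts exist.

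\textbf{Step 3: global convergence and rate.} I would first note that \textsc{PMIe} returns the same iterate as \textsc{PMI}: it only reassociates contractions, updating the blocks in the same cyclic order with the same most-recently-updated factors, so it suffices to analyze \textsc{PMI}. There, a block-$j$ update is, after dropping the positive scalars $a_j$ and $\|\fv^{(j)}\|^{a_j-2}$, the normalized gradient step $\fv^{(j)}\mapsto\fw:=\nabla\tilde g(\fv^{(j)})/\|\nabla\tilde g(\fv^{(j)})\|$; convexity of $\tilde g$ then gives
\[
\tilde g(\fw)-\tilde g(\fv^{(j)})\ \ge\ \bigl\langle\nabla\tilde g(\fv^{(j)}),\,\fw-\fv^{(j)}\bigr\rangle\ =\ \tfrac12\,\|\nabla\tilde g(\fv^{(j)})\|\,\|\fw-\fv^{(j)}\|^2\ \ge\ 0,
\]
using $\langle\nabla\tilde g(\fv^{(j)}),\fw\rangle=\|\nabla\tilde g(\fv^{(j)})\|$. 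Thus $F$ is non-decreasing along the iterates and hence convergent, $M$ being compact. Summing the per-block inequalities over one sweep produces a sufficient-ascent bound $F_{t+1}-F_t\ge c_1\|\fv_{t+1}-\fv_t\|^2$ (with $c_1>0$ once the pre-normalization vectors $\|\tilde\fv^{(i)}\|$ are seen to stay bounded away from $0$, e.g.\ for shifts strictly above the threshold) together with a matching gradient bound $\|\operatorname{grad}_M F(\fv_{t+1})\|\le c_2\|\fv_{t+1}-\fv_t\|$, obtained from the Hessian estimate of Step~2. Since $F$ is real-analytic on the compact real-analytic manifold $M$, it satisfies the \L{}ojasiewicz gradient inequality with some exponent $\theta\in(0,\tfrac12]$ at every point. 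Feeding the two estimates into the standard \L{}ojasiewicz convergence scheme for monotone algorithms (as carried out for HOPM in~\cite{uschmajew2014new} and for the symmetric shifted power method in~\cite{kileel2025subspace}) forces the entire sequence $(\fv^{(1)}_t,\ldots,\fv^{(\ell)}_t)$ to converge to a single first-order critical point of $F$, i.e.\ a pSVT of $\sT$ by Step~1; the exponent governs the rate, giving linear convergence when $\theta=\tfrac12$ and an algebraic rate $t^{-\theta/(1-2\theta)}$ otherwise, hence at least algebraically in all cases.

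\textbf{Anticipated obstacle.} Steps 1 and 2 are routine. The real work is Step 3, in two places: (i) making the sufficient-ascent and gradient bounds uniform in the iterate, since $\nabla\tilde g(\fv^{(j)})$ -- equivalently the pre-normalization vector $\tilde\fv^{(i)}$ of Algorithm~\ref{alg:pmi_ineff} -- can in principle vanish, which may force the shift strictly above $(a_i-1)\|\sT\|_2$ or require a separate argument at points where a block update stalls; and (ii) transporting the \L{}ojasiewicz machinery to this block-coordinate setting on a product of spheres. I would model the argument on the symmetric case of~\cite{kileel2025subspace}, checking that neither the block structure nor the partial symmetry spoils the estimates.
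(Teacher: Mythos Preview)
Your proposal follows essentially the same route as the paper: Steps~1 and~2 agree (the paper bounds the Hessian via the multilinear spectral norm directly rather than through Banach's theorem, but the content is identical), and Step~3 proceeds by monotone ascent plus \L{}ojasiewicz. The difference is in how the \L{}ojasiewicz machinery is packaged. You aim for the Attouch--Bolte pair: sufficient ascent $F_{t+1}-F_t\ge c_1\|\fv_{t+1}-\fv_t\|^2$ and a relative-error bound $\|\operatorname{grad}_M F(\fv_{t+1})\|\le c_2\|\fv_{t+1}-\fv_t\|$. As you correctly anticipate in obstacle~(i), the first of these needs $\|\nabla F_i\|$ uniformly bounded below, and this can fail at the threshold shift $\gamma_i=(a_i-1)\|\sT\|_2$ when $a_i\in\{1,2\}$ (one computes $\|\nabla F_i\|\ge a_i(\gamma_i-\|\sT\|_2)$, which is not positive there). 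So your scheme, as written, does not quite reach the theorem as stated.

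The paper sidesteps this obstacle entirely. From the same convexity step you derived, it combines the per-block increase $F(\fv')-F(\fv)\ge\tfrac12\|\nabla F_i(\fv)\|\,\|\Delta_i\|^2$ with the elementary identity
\[
\bigl\|\nabla_{\SB^{m_i-1}}F_i(\fv)\bigr\|^2
=\|\nabla F_i(\fv)\|^2\bigl(1-\langle\fv^{(i)},\fv^{(i)\prime}\rangle^2\bigr)
\le\|\nabla F_i(\fv)\|^2\,\|\Delta_i\|^2
\]
to obtain
\[
F(\fv')-F(\fv)\ \ge\ \tfrac12\,\bigl\|\nabla_{\SB^{m_i-1}}F_i(\fv)\bigr\|\,\|\Delta_i\|,
\]
which involves only the Riemannian block gradient and needs no lower bound on $\|\nabla F_i\|$. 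Plugging the \L{}ojasiewicz inequality into this (rather than into an (H1)+(H2) pair) and telescoping over block updates gives summability of $\|\Delta_i\|$ directly, hence a Cauchy sequence; the rate then comes from the standard \L{}ojasiewicz exponent dichotomy. This is exactly the trick that removes the obstacle you flagged, and you should adopt it instead of pushing the constant-$c_1,c_2$ framework. Your observation that \textsc{PMIe} and \textsc{PMI} produce identical iterates is correct and matches what the paper implicitly relies on.
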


\begin{proof}
The first-order critical points of $F$ are the first-order critical points of 
\[
\Bigl\langle \sT, \bigotimes_{i=1}^\ell \bigl(\fv^{(i)}\bigr)^{\otimes a_i} \Bigr\rangle,
\]
which are the pSVTs of $\sT$, by~\cite{lim2005singular}.
For $i \in [\ell ]$, let $F_i$ be the restriction of $F$ to the $i$-th block with the other blocks fixed. 
The gradient and Hessian are
\begin{align}
\nabla F_i (\fv^{(i)})
&= a_i \,\sT\cdot \bigotimes_{j=1}^\ell \bigl(\fv^{(j)}\bigr)^{\otimes (a_j - \delta_{i,j})}
+ a_i \gamma_i \|\fv^{(i)}\|^{a_i-2} \fv^{(i)}, \label{eq:grad_Fi} \\
\nabla^2 F_i (\fv^{(i)})
&= a_i(a_i-1) \,\sT\cdot \bigotimes_{j=1}^\ell \bigl(\fv^{(j)}\bigr)^{\otimes (a_j - 2\delta_{i,j})}
+ a_i(a_i-2)\gamma_i\|\fv^{(i)}\|^{a_i-4}(\fv^{(i)})^{\otimes 2} + a_i\gamma_i  \|\fv^{(i)}\|^{a_i-2}I. \label{eq:hess_Fi}
\end{align}
For convexity of $F_i$, we require $\fy\T \nabla^2 F_i(\fv^{(i)})\fy \geq0$ for all $\fy\in \SB^{m_i-1}$.  
If $a_i \geq 2$, this is bounded from below by
$-a_i(a_i-1)\|\sT\|_2 + a_i\gamma_i$,
so it is non-negative if $\gamma_i \geq(a_i-1)\|\sT\|_2$, 
using \eqref{eq:hess_Fi} and $\|\fv^{(i)}\|=1$. If $a_i=1$, the inequality holds by Cauchy-Schwarz.  

We show that each block update in Algorithm \ref{alg:psspm} increases the value of $F$.
The update for $\fv^{(i)}$ is 
\[
\fv^{(i)\prime} = \frac{\sT\cdot \bigotimes_{j=1}^\ell \bigl(\fv^{(j)}\bigr)^{\otimes a_j -\delta_{i,j}}
				      \;+\; \gamma_i \fv^{(i)}}{\| \sT\cdot \bigotimes_{j=1}^\ell \bigl(\fv^{(j)}\bigr)^{\otimes a_j -\delta_{i,j}}
				      \;+\; \gamma_i \fv^{(i)}\|} = \frac{\nabla F_i(\fv^{(i)})}{\|\nabla F_i(\fv^{(i)})\|}, \quad \text{for } \nabla F_i(\fv^{(i)}) \neq 0.
\]
If $\nabla F_i(\fv^{(i)})=0$, we set $\fv^{(i)\prime} = \fv^{(i)}$.
When $F_i$ is convex, updating a unit vector $\fv^{(i)}$ by normalizing the gradient of $F_i$ ensures that $F$ increases, unless $\fv^{(i)}=\fv^{(i)\prime}$, see e.g. \cite{kofidis2002best,regalia2003monotonic} for a proof and~\cite{kolda2011shifted} for a tensor application. 
We quantify the increase. Let 
$\fv=(\fv^{(1)},\ldots,\fv^{(\ell)})$ and $\fv'=(\fv^{(1)},\ldots,\fv^{(i)\prime},\ldots,\fv^{(\ell)})$.  
Convexity of $F_i$ gives
\begin{align}\label{eq:convexity}
F(\fv')-F(\fv)
&\geq\langle \nabla F_i(\fv),\, \fv^{(i)\prime}-\fv^{(i)} \rangle \\&= \|\nabla F_i(\fv)\| \langle \fv^{(i)\prime}, \fv^{(i)\prime}-\fv^{(i)}\rangle \\
&= \tfrac{1}{2}\|\nabla F_i(\fv)\| \,\|\fv^{(i)\prime}-\fv^{(i)}\|^2.
\end{align}
The Riemannian gradient of $F_i$ on the sphere $\SB^{m_i-1}$ is
\( \nabla_{\SB^{m_i-1}} F_i(\fv)
= (I-\fv^{(i)}\fv^{(i)\T})\nabla F_i(\fv)\).
Substituting $\nabla F_i = \|\nabla F_i\|\fv^{(i)\prime}$ yields
\begin{equation}\label{eq: riemannain euclidean gradient and distance}
\|\nabla_{\SB^{m_i-1}} F_i(\fv)\|^2
= \|\nabla F_i(\fv)\|^2 \bigl(1-\langle \fv^{(i)},\fv^{(i)\prime}\rangle^2\bigr)
\;\le\;\|\nabla F_i(\fv)\|^2 \,\|\fv^{(i)\prime}-\fv^{(i)}\|^2.
\end{equation}
Combining \eqref{eq:convexity} and 
\eqref{eq: riemannain euclidean gradient and distance}
gives
\begin{equation}\label{eq:function_increase}
F(\fv')-F(\fv)\;\ge\;\tfrac{1}{2}\|\nabla_{\SB^{m_i-1}} F_i(\fv)\| \,\|\fv^{(i)\prime}-\fv^{(i)}\|.
\end{equation}

We now prove the convergence of Algorithm~\ref{alg:psspm}. 
The proof adapts \cite[Theorem~2.3]{schneider2015convergence} to blockwise updates.  
Let
\[
\fv_{(k,i)} = \bigl(\fv^{(1,k+1)}, \ldots, \fv^{(i-1,k+1)}, \fv^{(i,k+1)}, \fv^{(i+1,k)}, \ldots, \fv^{(\ell,k)}\bigr)
\]
be the tuple in the $k$-th iteration after updating block $i$ and set $\fv_{(k,\ell+1)} = \fv_{(k+1,1)}$. 
The sequence $\{\fv_{(k,i)}\}_{k \geq0, 1\leq i\leq \ell}$ has a subsequence that converges to a point
\[
\fv_\ast = \bigl(\fv^{(1)}_\ast,\ldots,\fv^{(\ell)}_\ast\bigr),
\]
since the ambient space $\mathbb{S}^{m_1 - 1} \times \cdots \times \mathbb{S}^{m_\ell - 1}$ is compact. 
Each $F_i$ is real-analytic on $\mathbb{S}^{m_i - 1}$. 
Hence there exist constants $\theta_i \in [\tfrac{1}{2}, 1)$, $\Lambda_i > 0$, and $\sigma > 0$ such that for all $\fv^{(i)}$ with $\|\fv^{(i)}-\fv^{(i)}_\ast\|\leq\sigma$, the Łojasiewicz inequality \cite[p92]{law1965ensembles} holds: 
\begin{equation}\label{eq: lojasiewicz}
\bigl|F_i(\fv^{(i)})-F_i(\fv^{(i)}_\ast)\bigr|^{\theta_i}
\;\le\;\Lambda_i \bigl\|\nabla_{\SB^{m_i-1}}F_i(\fv^{(i)})\bigr\|.
\end{equation}
Set $\theta=\max_i \theta_i$ and $\Lambda=\max_i \Lambda_i$. 
The sequence $\{F(\fv_{(k,i)})\}_{k\geq0}$ increases monotonically, so it converges to $F(\fv_\ast)$.  
Suppose $\fv_{(k,i)}$ is close enough to $\fv_\ast$ and $k$ is large so that
\begin{equation}\label{eq:large k}
\|\fv_{(k,i)}-\fv_\ast\| \leq\tfrac{\sigma}{2},
\quad
\bigl(F(\fv_\ast)-F(\fv_{(k,i)}) \bigr)^{1-\theta} \leq \min\{\tfrac{\sigma}{4\Lambda},\frac{1}{2}\}.
\end{equation}
Then equation \eqref{eq: lojasiewicz} holds for $\fv_{(k,i)}$ and together with \eqref{eq:function_increase}, we obtain 
\[
\bigl(F(\fv_\ast)-F(\fv_{(k,i)})\bigr)^{\theta_i}{\|\fv^{(i,k+1)}-\fv^{(i,k)}\|}
\;\le\;2\Lambda_i (F(\fv_{(k,i+1)})-F(\fv_{(k,i)})).
\]
We define $G(\fv):=F(\fv_\ast)-F(\fv)$, whose function value is non-increasing after each update. 
If $G(\fv_{(k,i)}) = 0$, then $\fv_\ast = \fv_{k,i}$ and the proof is complete.
Otherwise, we have
\begin{align}
\|\fv^{(i,k+1)}-\fv^{(i,k)}\| &\;\le\;2\Lambda_i\frac{G(\fv_{(k,i)})-G(\fv_{(k,i+1)})}{G(\fv_{(k,i)})^{\theta_i}}\\
&\;\le\;2\Lambda\bigl(G(\fv_{(k,i)})^{1-\theta_i}-G(\fv_{(k,i+1)})^{1-\theta_i}\bigr)\\
\label{eq: difference to sum}
&\;\le\;2\Lambda\bigl(G(\fv_{(k,i)})^{1-\theta}-G(\fv_{(k,i+1)})^{1-\theta}\bigr)\\
& \;\le\; 2\Lambda G(\fv_{(k,i)})^{1-\theta} \leq\tfrac{\sigma}{2},
\end{align}
 where the second last inequality follows from the fact that the function $f(x) = x^{1-\theta}- x^{1-\theta_i}$ takes non-negative values and is monotonically increasing for small $x>0$ and the last inequality follows from~\eqref{eq:large k}.
By the triangle inequality, \( \fv_{(k,i+1)} \) also satisfies $\|\fv_{(k,i+1)}- \fv_\ast\|\leq \sigma$, so the Łojasiewicz inequality holds for $\fv_{(k,i+1)}$.
Hence all iterates \( \fv_{(k',j)} \) after \( \fv_{(k,i)} \) satisfy
$$
\|\fv_{(k',j)} - \fv_\ast\| \leq\frac{\sigma}{2},
$$
by induction, 
so the Łojasiewicz inequality holds for all \( \fv_{(k',j)} \) around \( \fv_\ast \).
Summing over iterates, we obtain from \eqref{eq: difference to sum} that 
\begin{align}\label{eq:sum-small}
\sum_{(k,i)<(m,n)\leq(k',j)} \|\fv_{(m,n)}-\fv_{(m,n-1)}\|
&\leq\sum_{(k,i)<(m,n)\leq(k',j)} 2\Lambda (G(\fv_{(m,n-1)})^{1-\theta}-G(\fv_{(m,n)})^{1-\theta}) \\
&\le
2\Lambda\,G(\fv_{(k,i)})^{1-\theta}
<\tfrac{\sigma}{2}.
\end{align}
Therefore, $\{\fv_{(k,i)}\}$ is Cauchy and converges to $\fv_\ast \in \SB^{m_1-1}\times\cdots\times \SB^{m_\ell-1}$. The convergence rate is at least algebraic by \cite[Theorem 2.3]{schneider2015convergence}.

We are left to check that \( \fv_\ast \) is a pSVT of $\sT$.
Taking limits on the update rule of Algorithm~\ref{alg:psspm}, we have
$$
\fv^{(i)}_\ast \| \sT\cdot \bigotimes_{j=1}^\ell (\fv_\ast^{(j)})^{\otimes (a_i-\delta_{i,j})} + \gamma_i \fv_\ast^{(i)} \| =  \sT\cdot \bigotimes_{j=1}^\ell (\fv_\ast^{(j)})^{\otimes (a_i-\delta_{i,j})} + \gamma_i \fv_\ast^{(i)},
$$ 
which implies 
$
\sigma\fv^{(i)}_\ast  = \sT\cdot \bigotimes_{j=1}^\ell ({\fv_\ast^{(j)}})^{\otimes (a_i-\delta_{i,j})},
$
where $\sigma = \sT \cdot \bigotimes_{j=1}^\ell (\fv_\ast^{(j)})^{\otimes a_i}$.
\end{proof}

\begin{remark}
In the above proof, we update the blocks in a fixed order in each outer iteration.
Global convergence still holds for any order with every block updated at least once per outer iteration.
\end{remark}

\subsection{Local convergence}

In the following result, we show local linear convergence of Algorithm~\ref{alg:psspm} to certain pSVTs of a tensor with two blocks (i.e. $\ell = 2$). 
This generalizes~\cite[Theorem 4.10]{kileel2025subspace}, which is the case $\ell = 1$.

\begin{proposition}\label{prop:local linear}
Let 
\(
\sT \in S^{d_1}(\RR^{m_1}) \otimes S^{d_2}(\RR^{m_2})
\)
be a partially symmetric tensor whose pSVTs have singular value at most one.
Let $\gamma_1,\gamma_2 \geq 0$ be such that
\[
F(\fv^{(1)},\fv^{(2)}) 
= \langle \sT, (\fv^{(1)})^{\otimes d_1} \otimes (\fv^{(2)})^{\otimes d_2} \rangle 
+ \gamma_1 \|\fv^{(1)}\|^{d_1} + \gamma_2 \|\fv^{(2)}\|^{d_2}
\]
is convex in each block. 
Then, critical points of $F$ are pSVTs of $\sT$. 
PS–HOPM on $\sT$ with shifts $\gamma_1,\gamma_2$ has local linear convergence around every pSVT that is a second-order critical point of $F$ with positive singular value on the product of spheres ${\SB^{m_1-1}\times \SB^{m_2-1}}$.
\end{proposition}

\begin{proof}
Let $F_1,F_2$ denote the restrictions of $F$ to the first and second blocks, respectively.  They are convex, by assumption. 
Denote the Euclidean gradient of $F_i$ over the $i$-th block by $\nabla F_i(\fv^{(i)})$.
The PS–HOPM update in block $1$ is the normalized gradient map
\[
\Psi_1(\fv^{(1)}, \fv^{(2)}) = \left(\frac{\nabla F_1(\fv^{(1)})}{\|\nabla F_1(\fv^{(1)})\|}, \fv^{(2)}\right),
\]
and similarly for block $2$ and $\Psi_2$.
The overall update is the composition 
\(
\Psi = \Psi_2 \circ \Psi_1.
\)
A 
second-order critical point  
$\fv_\star=(\fv^{(1)},\fv^{(2)})$ with singular value $\sigma$ of $F$ is a fixed point of this map.  
Local linear convergence follows if the Jacobian 
\[
J\Psi = J\Psi_2(\fv_\star)\, J\Psi_1(\fv_\star)
\]
has spectral radius strictly less than one, by the center–stable manifold theorem (see, e.g., \cite[Theorem~3.5]{rheinboldt1998methods}).   

We compute the Jacobians $J\Psi_2, J\Psi_1$ and relate them to the Riemannian Hessian of $F$ at $\fv_\star$.  
Let $\mathbf{H}_{i,j}$ denote the $(i,j)$–block of the Riemannian Hessian of $F$, where $i,j\in\{1,2\}$ and define $\mathbf{P}_i = \Id - \fv^{(i)}\fv^{(i)\T}$ and $\alpha_i =\|\nabla F_i(\fv^{(i)})\|$. 
The gradient of $F_i$ is
$\nabla F_i (\fv^{(i)})
= d_i \,\sT\cdot \bigotimes_{j=1}^2 \bigl(\fv^{(j)}\bigr)^{\otimes (d_j - \delta_{i,j})}
+ d_i \gamma_i \fv^{(i)} = d_i(\sigma+\gamma_i) \fv^{(i)}$, so
 $\alpha_i = d_i(\sigma+ \gamma_i)>0$ by the assumption $\sigma>0$. 
We have 
\begin{align*}
\frac{\partial \Psi_i(\fv^{(i)})}{\partial \fv^{(i)}_j} =  \frac{\tfrac{\partial}{\partial \fv^{(i)}_j} \nabla F_i(\fv^{(i)})}{\|\nabla F_i(\fv^{(i)})\|}
-
\frac{\nabla F_i(\fv^{(i)})}{\|\nabla F_i(\fv^{(i)})\|^2} 
 \frac{\bigl\langle \nabla F_i(\fv^{(i)}), \;\tfrac{\partial}{\partial \fv^{(i)}_j} \nabla F_i(\fv^{(i)}) \bigr\rangle}{\|\nabla F_i(\fv^{(i)})\|}.
\end{align*}
It follows that the $(1,1)$–block of the Jacobian $J\Psi_1(\fv^{(1)})$ (as a linear operator on the tangent space $T_{\fv^{(1)}}\SB^{m_1-1}$) is
\begin{equation}\label{eq: J11}
(J\Psi_1)_{1,1} =  \mathbf{P}_1 \Biggl( 
\frac{\nabla^2 F_1(\fv^{(1)})}{\alpha_1} 
- \frac{\nabla^2 F_1(\fv^{(1)}) \nabla F_1(\fv^{(1)}) \nabla F_1(\fv^{(1)})\T}{\alpha_1^3}
\Biggr)\mathbf{P}_1 = \mathbf{P}_1 \frac{\nabla^2 F_1(\fv^{(1)})}{\alpha_1} \mathbf{P}_1,
\end{equation}
where the second term is zero
since $\nabla F_1(\fv^{(1)}) = \alpha_1 \fv^{(1)}$ at the fixed point $\fv_\star$.  
The $(1,1)$ Riemannian Hessian block is 
\begin{equation}\label{eq: H1,1}
\mathbf{H}_{1,1} = \mathbf{P}_1\bigl(\nabla^2 F_1(\fv^{(1)}) - \langle \nabla F_1(\fv^{(1)}), \fv^{(1)}\rangle \Id\bigr)\mathbf{P}_1 = \alpha_1 (J\Psi_1)_{1,1} - \alpha_1 \mathbf{P}_1,
\end{equation}
so we have
\begin{equation}\label{eq: J11 H11}
(J\Psi_1)_{1,1}
= \frac{\mathbf{H}_{1,1}}{\alpha_1} +\mathbf{P}_1.
\end{equation}
Similarly, the $(1,2)$–block of the Jacobian $J\Psi_{1}$ is 
\[
(J\Psi_1)_{1,2}
= \frac{\mathbf{H}_{1,2}}{\alpha_1}.
\]
Altogether, the Jacobian of the full iteration is 
\[
J \Psi= 
\begin{pmatrix}
\Id & 0 \\
\frac{\mathbf{H}_{2,1}}{\alpha_2} & \frac{\mathbf{H}_{2,2}}{\alpha_2} + \mathbf{P}_2
\end{pmatrix}
\begin{pmatrix}
\frac{\mathbf{H}_{1,1}}{\alpha_1}+\mathbf{P}_1 & \frac{\mathbf{H}_{1,2}}{\alpha_1} \\
0 & \Id
\end{pmatrix}.
\]
The matrix $(J\Psi_1)_{1,1}$ is symmetric and its eigenvalues are at least $0$  by \eqref{eq: J11} and $F_1$ being convex, and they are at most $1$ by \eqref{eq: J11 H11} and since $\mathbf{H}_{1,1}$ is negative definite, being a submatrix of the Hessian at a maximizer.

To analyze the spectrum, suppose $(\fv,\fw)$ is an normalized eigenvector of $J\Psi$ with eigenvalue $\lambda$.  
We will show that $\lambda$ is real.
Expanding $J\Psi (\fv,\fw ) =\lambda (\fv, \fw)$ gives
\begin{equation}\label{eq:vw}
\lambda \fv = \left(\frac{\mathbf{H}_{1,1}}{\alpha_1} + \mathbf{P}_1\right)\fv + \frac{\mathbf{H}_{1,2}}{\alpha_1}\fw,
\qquad
\lambda \fw = \frac{\mathbf{H}_{2,1}}{\alpha_2} \lambda\fv + \left(\frac{\mathbf{H}_{2,2}}{\alpha_2}+\mathbf{P}_2\right)\fw.
\end{equation}
Multiplying the equations \eqref{eq:vw} with the Hermitian transposes $\fv^H$ and $\fw^H$ respectively, and setting
\begin{equation}\label{eq:beta12mu}
\beta_1 = \fv^H \Bigl(\frac{\mathbf{H}_{1,1}}{\alpha_1}+\mathbf{P}_1\Bigr)\fv, \quad
\beta_2 = \fw^H \Bigl(\frac{\mathbf{H}_{2,2}}{\alpha_2}+\mathbf{P}_2\Bigr)\fw, \quad
\mu = \fv^H \mathbf{H}_{1,2}\fw ,
\end{equation}
we obtain
\begin{equation}\label{eq: lambda mu}
\lambda = \beta_1 + \frac{\mu}{\alpha_1}, 
\qquad
\lambda = \frac{\lambda}{\alpha_2}\bar{\mu} + \beta_2.
\end{equation}
Note that $\beta_1,\beta_2$ are real non-negative numbers with $\beta_1 + \beta_2\leq 1$ since the symmetric matrices $\frac{\mathbf{H}_{1,1}}{\alpha_1} + \mathbf{P}_1, \frac{\mathbf{H}_{2,2}}{\alpha_2}+\mathbf{P}_2$ have eigenvalues in $[0,1]$ and $\|\fv\|^2 +  \|\fw \|^2 = 1$.
Multiplying the first equation in \eqref{eq: lambda mu} by $\alpha_1\bar{\lambda}$, the second equation by $\alpha_2$ and taking the sum, we have
$$
\alpha_1\|\lambda\|^2 +  \lambda \alpha_2 = \beta_1\alpha_1 \bar{\lambda}  + \alpha_2\beta_2+ \mu \bar{\lambda} + \bar{\mu}\lambda.
$$
It follows that $\beta_1\alpha_1\bar{\lambda} - \lambda \alpha_2$ is real. Then either $\lambda$ is real or $\alpha_2 + \beta_1\alpha_1 = 0$. The second case cannot happen since $\alpha_2, \alpha_1>0, \beta_1\geq 0$.
Therefore, $\lambda\in\RR$ and so is $\mu.$

We now show that $0 \leq \lambda < 1$.  
If $\mu < 0$, then $\lambda = \beta_1 + \tfrac{\mu}{\alpha_1} < \beta_1 \leq 1$.  
Assume $\lambda < 0$, the relation $\lambda = \beta_2 + \tfrac{\lambda}{\alpha_2}\mu$ implies $\tfrac{\lambda}{\alpha_2}\mu< 0$ and hence $\mu > 0$, a contradiction.  
Thus, $0 \leq \lambda < 1$. 
If $\mu = 0$, then $0 \leq \lambda = \beta_1 = \beta_2 \leq \frac{1}{2},$ since $\beta_1,\beta_2 \geq 0$ and $\beta_1 + \beta_2 = 1$.
If instead $\mu > 0$, then $\lambda = \beta_1 + \tfrac{\mu}{\alpha_1} > 0$. 
Then 
$$2\mu + \alpha_1\beta_1 + \alpha_2\beta_2 \leq \alpha_1 \|\fv\|^2 + \alpha_2 \|\fw\|^2 \leq \max\{\alpha_1,\alpha_2\},$$ by \eqref{eq:beta12mu} and the negative definiteness of the Hessian of $F$. 
Hence $\max\{\alpha_1,\alpha_2\} - \mu > 0$.
Multiplying the equations of \eqref{eq: lambda mu} by $\alpha_1$ and $\alpha_2$, respectively, and by the above inequality, we obtain 
\begin{align*}
(\alpha_1 + \alpha_2) \lambda = (\lambda-1) \mu + \alpha_1\beta_1 + \alpha_2 \beta_2 + 2\mu  < \max\{\alpha_1,\alpha_2\} + (\lambda-1) \mu.
\end{align*}
Hence,  
$\lambda < \frac{\max\{\alpha_1,\alpha_2\} - \mu}{\alpha_1 + \alpha_2 - \mu}<1.$
\end{proof}

We have shown local linear convergence to a global maximizer of $F_\Acal$. In practice,  PS-HOPM  may converge to a critical point of $F_\Acal$ that is not globally optimal.   
We show that a critical point of $F_\Acal$ has low objective value or is close to an optimal rank-one component.

\begin{proposition}
Fix
\(
\sT^{(f)} = \sum_{i=1}^r \lambda_i \bigotimes_{j=1}^\ell\,
(\fv^{(j)}_i)^{\otimes f_j}\otimes \fu_i,
\)
with last slices orthonormal with span $\Acal$. 
Define the rank-one tensors
\[
\sR^{(i)} := \bigotimes_{j=1}^\ell\,
(\fv^{(j)}_i)^{\otimes f_j}, 
\qquad i=1,\ldots,r.
\] 
Assume that $\Acal = \Span\{\sR^{(1)},\ldots, \sR^{(r)}\}.$
For $(\fw^{(1)},\ldots,\fw^{(\ell)})\in \SB^{m_1-1}\times\cdots\times \SB^{m_\ell-1}$, let 
$\sT_\fw := \bigotimes_{j=1}^\ell (\fw^{(j)})^{\otimes f_j}.$
Then the objective function $F_\Acal$ satisfies
\[
\max_i |\langle \sR^{(i)}, \sT_\fw \rangle|^2
\;\le\; F_\Acal(\fw^{(1)},\ldots,\fw^{(\ell)})
\;\le\;
C \max_i |\langle \sR^{(i)}, \sT_\fw \rangle|,
\]
where $C = {({1-\rho})^{-1}}\Bigl(\prod_{k=1}^\ell (1+(r-1)\rho_k)\Bigr)^{1/\ell}$, 
\(
\rho := \max_{i\ne j} |\langle \sR^{(i)}, \sR^{(j)} \rangle| \) and,  for \(k=1,\ldots,\ell\), 
\(
\rho_k := \max_{i\ne j} |\langle \fv^{(k)}_i, \fv^{(k)}_j \rangle|\).
\end{proposition}
\begin{proof}
We have
\[
F_\Acal(\fw^{(1)},\ldots,\fw^{(\ell)}) 
= \|P_\Acal(\sT_\fw)\|^2
\;\ge\; \max_i\|P_{\Span\{\sT^{(i)} \}}(\sT_\fw)\|^2
= \max_i |\langle \sR^{(i)}, \sT_\fw\rangle|^2.
\]
This gives the lower bound.
For the upper bound,
there exist scalars $\mu_1,\ldots,\mu_r$ such that
\(
P_\Acal(\sT_\fw) = \sum_{i=1}^r \mu_i \sR^{(i)},
\)
so that
\(
F_\Acal(\fw^{(1)},\ldots,\fw^{(\ell)}) = \sum_{i=1}^r \mu_i \langle \sR^{(i)}, \sT_\fw\rangle.
\)
Since~$P_\Acal(\sR^{(i)})= \sR^{(i)}$, we have
\[
\langle P_\Acal(\sT_\fw), \sR^{(i)}\rangle = \langle \sT_\fw, \sR^{(i)}\rangle
= \mu_i + \sum_{j\ne i} \mu_j \langle \sR^{(j)}, \sR^{(i)}\rangle.
\]
We obtain
\[
|\mu_i - \langle \sR^{(i)}, \sT_\fw\rangle|
\leq\rho \|\mu\|_\infty
\Rightarrow
|\mu_i|\leq\rho \|\mu\|_\infty + |\langle \sR^{(i)},\sT_\fw\rangle|,
\]
by comparing these two equalities.
This yields an upper bound 
\[
\|\mu\|_\infty \leq\frac{\max_i |\langle \sR^{(i)}, \sT_\fw\rangle|}{1-\rho}.
\]
Plugging this into the expression for $F_\Acal$, we obtain
\begin{equation}\label{eq:bound1}
F_\Acal(\fw^{(1)},\ldots,\fw^{(\ell)})
\leq\frac{\sum_{i=1}^r |\langle \sR^{(i)}, \sT_\fw\rangle|}{1-\rho} \max_i |\langle \sR^{(i)}, \sT_\fw\rangle|.
\end{equation}
We have
\begin{equation}\label{eq:holder}
\sum_{i=1}^r |\langle \sR^{(i)}, \sT_\fw\rangle|
= \sum_{i=1}^r \prod_{j=1}^\ell |\langle \fv^{(j)}_i, \fw^{(j)}\rangle|^{f_j}
\;\le\; \Biggl(\prod_{j=1}^\ell \sum_{i=1}^r |\langle \fv^{(j)}_i, \fw^{(j)}\rangle|^{\ell f_j}\Biggr)^{1/\ell},
\end{equation}
by Hölder's inequality.
Let $\mathbf{M}_j$ be the $r\times m_j$ matrix with rows $\fv^{(j)}_1,\ldots,\fv^{(j)}_r$. 
Then,
\begin{align}\label{eq:gershgorin}
\sum_{i=1}^r |\langle \fv^{(j)}_i, \fw^{(j)}\rangle|^{\ell f_j}
&\leq\sum_{i=1}^r |\langle \fv^{(j)}_i, \fw^{(j)}\rangle|^2
= \|\mathbf{M}_j \fw^{(j)}\|_2^2
\leq\| \mathbf{M}_j \mathbf{M}_j\T  \|_2
\leq1+(r-1)\rho_j,
\end{align}
where the last inequality follows from Gershgorin’s circle theorem~~\cite{horn2012matrix}.
Hence, we obtain the upper bound
\[
F_{\mathcal{A}}(\fw_1,\dots,\fw_\ell)
\leq\frac{\left(\prod_{j=1}^{\ell}(1+(r-1)\rho_j)\right)^{\frac{1}{\ell}}}{1-\rho}\max_i|\langle \sR^{(i)},\sT_\fw\rangle|,
\]
from \eqref{eq:bound1}, \eqref{eq:holder}, and \eqref{eq:gershgorin}.
\end{proof}

\begin{remark}
When all the rows of each $\bM_j$ are orthonormal, we have $\rho=\rho_1=\cdots=\rho_\ell=0$, and hence
\(
\max_i |\langle \sR^{(i)}, \sT_\fw\rangle|^2
\;\le\; F_\Acal(\fw^{(1)},\ldots,\fw^{(\ell)})
\;\le\; \max_i |\langle \sR^{(i)}, \sT_\fw\rangle|\).
\end{remark}

\section{The MSPM algorithm}
In this section, we show how to recover a rank-$r$ tensor decomposition via MSPM. The procedure has four steps. We focus on Steps (1), (3), and (4). Step (1) transforms the input tensor into an auxiliary tensor with orthonormal slices, where rank-one summands correspond to pSVTs with singular value one (Section~\ref{subsec: tensor transformation}). Step (2), which computes these pSVTs via a power method, was developed in Section~\ref{sec: power method}. Step (3) completes the partal factors from the pSVTs to full rank-one summands (Section~\ref{subsec:completion}) then extracts all components via deflation (Section~\ref{subsec:deflation}). Step (4) repeats the first three steps until full decomposition
(Section~\ref{subsec:practical_considerations}).

\subsection{Transforming a tensor to one with orthonormal slices}\label{subsec: tensor transformation}

In Section~\ref{sec:decomposition via svt}, we studied tensors with orthonormal slices and established a correspondence between the summands in their decomposition and the pSVTs with singular value one.
For $\sT\in \bigotimes_{i=1}^\ell S^{d_i}(\RR^{m_i})$, we build a tensor with orthonormal slices, as follows (see Figure \ref{fig:flattening_pipeline}).
\begin{enumerate}
    \item Given any $f = (f_1, \ldots, f_\ell)$ with $0 \leq f_i \leq d_i$ for all $i$, build the flattening $\mathbf{T}^{(f)}$ of $\sT$.
    \item Compute an orthonormal basis for its column space (by SVD or QR decomposition).
    \item Reshape each basis vector into a partially symmetric tensor in $ \bigotimes_{i=1}^\ell S^{f_i}(\RR^{m_i})$. 
    \item Stack these to form the last slices of tensor~$\sT^{(f)}$ with symmetry $(f_1,\ldots,f_\ell,1)$.
\end{enumerate}

When the input tensor is noisy, one may truncate the SVD in Step~2 by retaining the top $r$ singular components, to a desired rank, or by applying a threshold to the singular values.

If the tensor $\sT$ has rank $r$ with decomposition
\[
\sT
=\sum_{i=1}^r \lambda_i\, \bigotimes_{j=1}^\ell(\fv_{i}^{(j)})^{\otimes d_j},\]
 then the associated tensor $\sT^{(f)}$ can be written as
\[
\sT^{(f)}
= \sum_{i=1}^r \mu_i\,
\left( \bigotimes_{j=1}^\ell (\fv_i^{(j)})^{\otimes f_j} \right)
\otimes
\fu_i,
\]
for some $\mu_i\in \RR, \fu_i\in \RR^{\prod_{i=1}^{\ell} m_i^{d_i-f_i}}$.
If $\sT^{(f)}$ has unique rank $r$ decomposition, we can recover the factors ${\fv_i^{(j)}}$ with $f_j\neq 0$ in $\sT$ from the decomposition of $\sT^{(f)}$, thereby reconstructing part of the decomposition of $\sT$.
If the rank of $\sT^{(f)}$ equals
$\rank \bT^{(f)}$, we can use the pSVTs of $\sT^{(f)}$ with singular value one to decompose $\sT^{(f)}$ by Theorem \ref{thm:decomp and singular vector}.

The condition that $\sT^{(f)}$ has unique rank $r = \rank \bT^{(f)}$ decomposition
holds 
for generic rank-one summands of $\sT$
if $r$ is smaller than a quantity related to $f$, which we now define.

\begin{definition}[Maximal rank $r(f)$]
\label{def:rmax}
Let $r(f)$ be the largest integer $r$ such that, for a generic partially symmetric tensor $\sT$ of rank $r$, the tensor $\sT^{(f)}$ and its flattening $\bT^{(f)}$ satisfy $$\rank(\sT^{(f)}) = \rank(\bT^{(f)})=r$$ and $\sT^{(f)}$ admit a unique rank-$r$ decomposition.
\end{definition}

The notion $r(f)$ is well-defined, as the uniqueness of the decomposition of $\sT^{(f)}$ does not depend on the orthonormal basis.
Suppose $\sT \in \bigotimes_{i=1}^\ell S^{d_i}(\mathbb{R}^{m_i}).$
The rank $r (f)$ cannot exceed the number of rows and columns of the flattening matrix $\bT^{(f)}$, which we denote by
\begin{equation}
    n_{\rm row}^f := \prod_{i=1}^{\ell} \binom{m_i + f_i - 1}{f_i} 
    = \dim\left( \bigotimes_{i=1}^\ell S^{f_i}(\mathbb{R}^{m_i}) \right), 
    \qquad 
    n_{\rm col}^f := \prod_{i=1}^{\ell} \binom{m_i + d_i - f_i - 1}{d_i - f_i}.
    \label{eqn:ncol}
\end{equation}
Here $n_{\rm row}^f$ and $n_{\rm col}^f$ count the number of distinct rows and columns, after removing repetitions due to partial symmetry.  
Another relevant quantity is the codimension of the rank-one tensors in $\bigotimes_{i=1}^\ell S^{f_i}(\mathbb{R}^{m_i})$, given by
\begin{equation}
    \label{eqn:ntrisecant}
    n_{\rm codim}^f := \prod_{i=1}^{\ell} \binom{m_i + f_i - 1}{f_i} - 1 - \sum_{f_i \neq 0} (m_i-1).
\end{equation}

\begin{proposition}
\label{thm:rmax_formula}
Let $\sT \in \bigotimes_{i=1}^\ell S^{d_i}(\mathbb{R}^{m_i})$ be a generic rank $r$ tensor, 
with flattening $\mathbf{T}^{(f)}$ from tuple 
$f = (f_1,\ldots,f_\ell)$. 
Form the tensor $\sT^{(f)}$ by stacking an orthonormal basis of $\mathbf{T}^{(f)}$, and define $n_{\rm row}^f,n_{\rm col}^f,n_{\rm codim}^f$ as in~\eqref{eqn:ncol} and \eqref{eqn:ntrisecant}.
Then the largest $r$ for which $\sT^{(f)}$ admits a 
unique rank $r = \rank \bT^{(f)}= \rank \sT$ decomposition of $\sT$ is
\begin{equation}\label{eq: r(f)}
r(f) =
\begin{cases}
\min\{\,n_{\rm col}^f,\, n_{\rm codim}^f + 1\,\}, & 
\text{if $f$ has two entries $i, j$ equal to $1$ (all others $0$)} \\[-0.5ex]
& \quad \text{with $\min\{m_i,m_j\}=2$,}\\
\min\{\,n_{\rm col}^f,\, n_{\rm codim}^f\,\}, & \text{otherwise}.
\end{cases}
\end{equation}
\end{proposition}
\begin{proof}
See Appendix \ref{app: proof of rmax}.
\end{proof}

The optimal flattening that maximizes $r(f)$ depends nontrivially on the tensor dimensions.
Figure~\ref{fig:rmax_plot} illustrates this behavior for $S^4(\mathbb{R}^{20}) \otimes \mathbb{R}^m$ as $m$ varies from 2 to 120, where we compare $r(f)$ across all possible flattenings.
In practice, selecting the optimal flattening requires evaluating $r(f)$ for each candidate. However, each $r(f)$ admits a simple closed-form expression, so this search is computationally negligible.

\begin{figure}[ht]
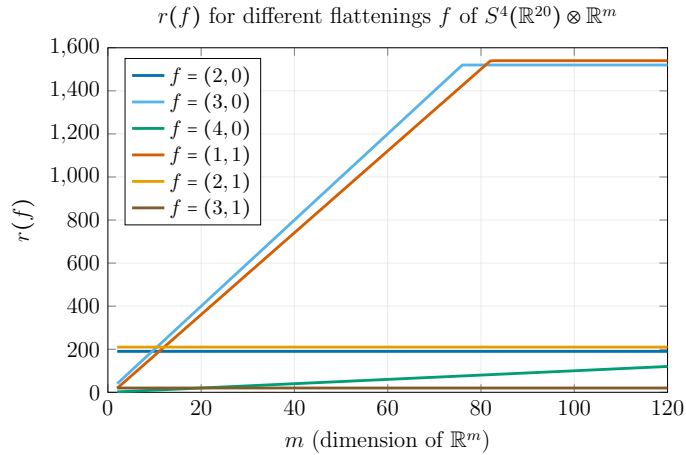

    \centering
    \includestandalone[width=0.6\textwidth]{tikzfigures_for_main/rmax_correct} 
     \caption{
        The maximal rank $r(f)$ for $S^4(\mathbb{R}^{20}) \otimes \mathbb{R}^m$ with 
        $m \in [2,120]$ for all flattenings $f$. The quantity $r(f)$ is piecewise linear in $m$ in this case.
    }
    \label{fig:rmax_plot}
\end{figure}

\subsection{Completing the decomposition}\label{subsec:completion}

We have seen that the pSVTs of $\sT^{(f)}$ allow us to recover partial rank-one components of $\sT$. 
Specifically, let
\[
\sT
= \sum_{i=1}^r \lambda_i\, \bigotimes_{j=1}^\ell (\fv_i^{(j)})^{\otimes d_j}.
\]
If $r \le r(f)$ (see Definition~\ref{def:rmax} and~\eqref{eq: r(f)}), then the partial component
\[
\sR_{(f)} := \bigotimes_{i=1}^\ell (\fv_1^{(i)})^{\otimes f_i}
\]
is recovered from the pSVTs of $\sT^{(f)}$ with singular value one.

We now describe how to recover its complement
\[ \sR_{(d-f)} = 
\bigotimes_{i=1}^\ell(\fv_{1}^{(i)})^{\otimes (d_i-f_i)}.
\]
The complement is determined immediately if $f_i > 0$ for all $i$. It remains to consider when $f_i = 0$ for some $i$. 
This is unavoidable when decomposing fully asymmetric tensors, and in the partially symmetric case can be advantageous, as it may lead to a flattening that can decompose tensors of higher rank.

\begin{definition}
Let \(d=(d_1,\ldots,d_\ell)\) and \(f=(f_1,\ldots,f_\ell)\) with \(d_i>0\) and \(0\leq f_i\leq d_i\).
The tuple \(f\) is \emph{symmetry breaking} for \(d\) if there exists an index \(i\) with \(f_i>0\) and \(d_i-f_i>0\).  
\end{definition}

We call such a tuple symmetry breaking becauase the same vector appears in both the tensor and its complement: after recovering \(\sR_{(f)} \) we know some of the vectors in \(\sR_{(d-f)}\). The tensor \(\sR_{(d-f)}\) can be reconstructed from those known factors together with the constraints of the row space
\(\mathbf{T}^{(f)}\), 
see Proposition~\ref{thm: match up patterns} and Figure~\ref{fig:symmetrybreaking}.

\begin{figure}[htbp]
  \centering
  \includegraphics[width=0.4\linewidth]{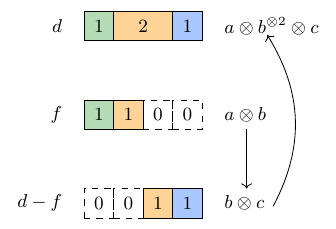}
  \caption{Symmetry–breaking case \(d=(1,2,1)\), \(f=(1,1,0)\): use shared vectors
  and the row space of \(\mathbf{T}^{(f)}\) to recover the full summand.}
  \label{fig:symmetrybreaking}
\end{figure}

When \(f\) is not symmetry breaking (for example \(d=(1,1,1,1)\),
\(f=(1,1,0,0)\)), one flattening is insufficient. In this case we use two tuples \(f,f'\) satisfying
\begin{equation}\label{eq:requirements on f1f2}
f' \neq d-f, \qquad \min(f,f')\neq 0, \qquad f \ngeq f',
\end{equation}
where $\min(f,f') \neq 0$ means $\exists i$ such that $\min(f_i,f'_i) \neq 0$ and $f \ngeq f'$ means $\exists i$ such that $f_i < f'_i$
The completion procedure is as follows (see Figure~\ref{fig:nonsymmetry})
\begin{enumerate}
\item Recover a rank-one tensor in \(\colspan(\mathbf{T}^{(f)})\).
\item Use the shared factors of $f$ and $f'$ to recover a rank-one tensor in \(\colspan(\mathbf{T}^{(f')})\).
\item If \(\min(f,f')>0\), recovery is complete. Otherwise, use the shared factors between $d-f$ and $f'$ to find the remaining factors. 
\end{enumerate}

\begin{figure}[htbp]
    \centering
    \includegraphics[width=0.5\linewidth]{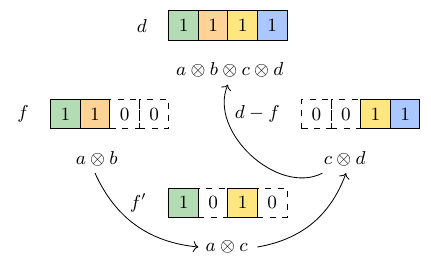}
    \caption{Steps to recover the rank-one tensor in the non-symmetry breaking case where $d = (1,1,1,1), f = (1,1,0,0), f' = (1,0,1,0)$.}
    \label{fig:nonsymmetry}
\end{figure}

We summarize the procedure for recovering $\sR_{(d-f)}$ in Algorithm \ref{alg: recover complement}.
Let $a = (a_1,\ldots,a_\ell)$ with $a_j = \min(f_j,d_j-f_j)$
We show how to recover $\sR_{(d-f)}$ using its shared factors with $\sR_{(f)}$.

\begin{algorithm}[htbp]
\caption{Recovery of complementary factors based on the type of flattening $f$}
\label{alg: recover complement}
\begin{algorithmic}[H]
\Require Flattening tuple $f$ and partially symmetric tensor $\sT 
=\sum_{i=1}^r \lambda_i \bigotimes_{j=1}^\ell (\fv_{i}^{(j)})^{\otimes d_j}$. Rank-one tensor $\sR_{(f)} = \bigotimes_{i=1}^\ell (\fv_{1}^{(i)})^{\otimes f_i}$. 
\Ensure Its complement $\sR_{(d-f)} = \bigotimes_{i=1}^\ell (\fv_{1}^{(i)})^{\otimes (d_i-f_i)}$.
\If{$f_i > 0$ for all $i$}
    \State All factors in $\sR_{(d-f)}$ also appear in $\sR_{(f)}$, so it can be found directly.
\ElsIf{$f$ is \textbf{symmetry breaking}}
    \State Some factors of $\sR_{(d-f)}$ are known.
    \State Reconstruct the remaining factors using the row span of $\mathbf{T}^{(f)}$ (see Figure~\ref{fig:symmetrybreaking}).
\Else 
    \State Choose tuple $f'$ such that
    \[
    f \neq d-f', \qquad \min(f,f')\neq 0, \qquad f \ngeq f'.
    \]
    \State Use the factors shared by $f$ and $f'$ to find a rank-one tensor in $\colspan(\mathbf{T}^{(f')})$.
    \If{$\min(f,f')>0$}
        \State Recovery is complete.
    \Else
        \State Use the factors shared by $f'$ and $d-f$ to recover the remaining factors from $\mathbf{T}^{(d-f)}$.
    \EndIf 
    \EndIf
\end{algorithmic}
\end{algorithm}

\begin{proposition}\label{thm: match up patterns}
Let $\Acal=\Span\Bigl\{\,\bigotimes_{j=1}^\ell (\fv_{i}^{(j)})^{\otimes (d_j-f_j)}\ :\ 1\leq i\leq r\,\Bigr\}$, where the $\fv_{i}^{(j)}$ are generic with norm one, and assume $\dim \Acal = r < r(d-f)$ with $r(d-f)$ as defined in \eqref{eq: r(f)}. 
Let $\sT^{(d-f)} \in \bigotimes_{j=1}^\ell S^{d_j - f_j}(\mathbb{R}^{m_j}) \otimes \mathbb{R}^r$
be the tensor formed by stacking an orthonormal basis of~$\Acal$.

For $1 \le i \le r$ and an integer tuple $a = (a_1,\ldots,a_\ell)$ with $0 < a_j < d_j - f_j$, define
\[
\sR^{(i)}_{(a)} := \bigotimes_{j=1}^\ell (\fv_i^{(j)})^{\otimes a_j}.
\]
Consider the contraction $\sT^{(d-f)} \cdot \sR^{(i)}_{(a)}$, and flatten to a matrix $\mathbf{M}$ with $r$ columns.
Then \(\Vect(\sR^{(i)}_{(d-f-a)})\) is the unique left singular vector of 
\(\mathbf{M}\) with
singular value \(1\). 
\end{proposition}

\begin{proof}
Let \((\fu,\fv)\) be a singular pair of $\bM$ with singular value $\sigma$.  
Let $\bM_\Acal$ be the flattening of $\sT^{(d-f)}$ via tuple $(d_1-f_1,\ldots,d_\ell-f_\ell,0)$ with $r$ orthonormal columns.
By definition of $\bM$, we have
\[
\bM_\Acal\T\,\Vect \bigl(\fu\otimes \sR^{(i)}_{(a)}\bigr) \;=\; \sigma \fv,
\]
where we regard $\fu\otimes \sR^{(i)}_{(a)}$ as an element of $\bigotimes_{j=1}^\ell S^{d_j-f_j}(\RR^{m_j})$ by appropriately reordering the tensor factors.  
We have
\[
\sigma = \bigl\|\,\bM_\Acal\T\Vect(\fu\otimes \sR^{(i)}_{(a)})\,\bigr\|_2
\leq\|\fu\otimes \sR^{(i)}_{(a)}\|_2=1,
\]
with equality when \(\fu\otimes \sR^{(i)}_{(a)}\in\Acal\), 
since the last slices of \(\sT^{(d-f)}\) are orthonormal.
Conversely, we assume 
\(\fu\otimes \sR^{(i)}_{(a)}\in\Acal\) and 
define $\fv = \bM_\Acal\T\Vect(\fu\otimes \sR^{(i)}_{(a)})$.
It follows that 
$(\Vect(\fu\otimes \sR^{(i)}_{(a)}), \fv)$ is a pSVT of $\bM_\Acal$ with singular value one. The tuple $(\fu,\fv)$ is a pSVT of $\bM$ with singular value one, by definition of $\bM$.

Let $\Bcal$ be the span of the left singular vectors of $\bM$ with singular value one.
Suppose that $\Vect(\sT') \in \Bcal$ is another rank-one tensor.  
Then 
$\sT' \otimes \sR^{(i)}_{(a)} \in \Acal$.  
Moreover, the $r$ rank-one tensors $\{\sR^{(i)}_{(d-f)}\}_{i=1}^r$ are the only rank-one elements in $\Acal$ (up to scale), since $r \leq r(d-f)$.  
It remains to exclude the possibility that $\sR^{(i)}_{(a)}$ and $\sR^{(j)}_{(a)}$ are collinear for some $i \neq j$.  
This cannot occur under the genericity assumption on the tensors $\{\sR^{(i)}_{(d-f)}\}_{i=1}^r$, which ensures that their corresponding factors are pairwise linearly independent.
\end{proof}

\begin{corollary}\label{cor: r_max with matching}
The largest rank for which one can uniquely recover a partially symmetric decomposition of a generic tensor $\sT\in \bigotimes_{i=1}^\ell S^{d_i}(\RR^{m_i})$ via finding pSVTs and matching up factors is
\begin{enumerate}
\item with a flattening \(f\) with all $f_i > 0$: 
$$r_{\max}(f) = r(f);$$
\item with a symmetry–breaking flattening \(f\):
\[
r_{\max}(f) = \min\{\,r(f),\ r(d-f)\,\}
=\min\{\,n_{\rm codim}^f,\ n_{\rm codim}^{\,d-f}\,\};
\]
\item with a pair \(f,f'\) that are not symmetry breaking and satisfy~\eqref{eq:requirements on f1f2}:
\[
r_{\max}(f,f') = \begin{cases}
\min\{\,r(f),\ r(f')\,\} & \text{if $\min\{f,f'\}>0$}, \\[0.5em]
 \min\{\,r(f),\ r(f'),\ r(d-f),\,\}, & \text{elsewhere}.
\end{cases}
\]

\end{enumerate}
\end{corollary}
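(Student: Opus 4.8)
The plan is to trace the recovery of one rank-one summand through Algorithm~\ref{alg: recover complement} and then invoke deflation: once the correctly identified summand $\lambda_1\mathscr{R}^{(1)}_{(d)}$ has been subtracted, the residual is again generic of rank $r-1$, so by induction it is enough to determine the condition on $r$ under which one pass is unambiguous. The first leg of every pass is identical: form $\bT^{(f)}$, stack an orthonormal basis of $\colspan\bT^{(f)}$ into $\sT^{(f)}$, run PS-HOPM to get the pSVTs with singular value one, and read off the rank-one tensors spanning $\colspan\bT^{(f)}$ by Theorem~\ref{thm:decomp and singular vector}. By Theorem~\ref{thm:rmax_formula} these are exactly $\mathscr{R}^{(1)}_{(f)},\dots,\mathscr{R}^{(r)}_{(f)}$ up to scale, and $\sT^{(f)}$ has a unique rank-$r$ decomposition, if and only if $r\le r(f)$; for $r>r(f)$ the decomposition of $\sT^{(f)}$ is no longer unique. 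Hence $r_{\max}(f)\le r(f)$ in cases~(1)--(2) and $r_{\max}(f,f')\le r(f)$ in case~(3). In case~(1), where all $f_i>0$, each $\mathscr{R}^{(i)}_{(f)}=\bigotimes_{j}(\fv^{(j)}_i)^{\otimes f_j}$ already exhibits every factor $\fv^{(j)}_i$ (fixed up to scale since $m_j\ge2$, and up to a sign that can be absorbed into $\lambda_i$ after normalization), so $\mathscr{R}^{(i)}_{(d-f)}$ is determined with no further hypothesis; the $\lambda_i$ then follow from the linear system $\bM=\bM_{\mathscr{R}}\T\mathbf{\Lambda}\mathbf{U}$ of Theorem~\ref{thm:decomp and singular vector}, which is solvable for $r\le r(f)$. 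Thus $r_{\max}(f)=r(f)$.

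For case~(2), with $f$ symmetry breaking: after the first leg ($r\le r(f)$), Theorem~\ref{thm: match up patterns} produces $\mathscr{R}^{(i)}_{(d-f)}$ as the unique rank-one left singular vector of singular value one of the flattening $\bM$ of $\sT^{(f)}\cdot\mathscr{R}^{(i)}_{(f)}$. Unwinding that theorem, the candidates are the rank-one elements of $\operatorname{rowspan}\bT^{(f)}=\colspan\bT^{(d-f)}$, and the correct one is singled out by the known shared factor at a symmetry-breaking block together with the condition $\mathscr{R}^{(i)}_{(f)}\otimes\mathscr{R}^{(i)}_{(d-f)}\in\Acal$; this is unambiguous precisely when $\colspan\bT^{(d-f)}$ contains no rank-one tensors other than $\mathscr{R}^{(1)}_{(d-f)},\dots,\mathscr{R}^{(r)}_{(d-f)}$, i.e. when $r\le r(d-f)$ by Theorem~\ref{thm:rmax_formula} applied to the tuple $d-f$. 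So a pass succeeds iff $r\le\min\{r(f),r(d-f)\}$, giving $r_{\max}(f)=\min\{r(f),r(d-f)\}$. For the second equality, Theorem~\ref{thm:rmax_formula} gives $r(f)=\min\{n_{\rm col}^f,n_{\rm codim}^f\}$ and $r(d-f)=\min\{n_{\rm col}^{d-f},n_{\rm codim}^{d-f}\}$ — one checks that the exceptional ``two coordinates equal to $1$'' branch either does not occur for a symmetry-breaking $f$ or does not change the minimum — and since $n_{\rm col}^f=n_{\rm row}^{d-f}>n_{\rm codim}^{d-f}$ and $n_{\rm col}^{d-f}=n_{\rm row}^f>n_{\rm codim}^f$, the minimum of the four equals $\min\{n_{\rm codim}^f,n_{\rm codim}^{d-f}\}$.

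For case~(3), with a non-symmetry-breaking pair $f,f'$ satisfying~\eqref{eq:requirements on f1f2}: follow the three enumerated steps preceding Algorithm~\ref{alg: recover complement}. Step~1 recovers a rank-one tensor in $\colspan\bT^{(f)}$, unambiguous iff $r\le r(f)$. Conditions~\eqref{eq:requirements on f1f2} ensure that consecutive tuples in the chain $f\to f'\to d-f$ share a common block on which both are positive, so Step~2 recovers the correct rank-one tensor of $\colspan\bT^{(f')}$ by Theorem~\ref{thm: match up patterns} applied with $f'$ in the role of $f$ and the known shared factors as the contracted block — unambiguous iff $r\le r(f')$. In the first sub-case of the corollary the factors recovered in Steps~1--2 already cover all $\ell$ blocks, so $\mathscr{R}^{(1)}_{(d-f)}$ is determined and only $r\le\min\{r(f),r(f')\}$ is needed; otherwise Step~3 recovers the remaining factors from $\colspan\bT^{(d-f)}$ via Theorem~\ref{thm: match up patterns}, using the factors shared between $f'$ and $d-f$, which needs $r\le r(d-f)$. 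The $\lambda_i$ are recovered by linear algebra as before, yielding the stated formula for $r_{\max}(f,f')$.

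The step I expect to be the main obstacle is the completion step in cases~(2) and~(3): making rigorous that, along the chain $f\to f'\to d-f$, the known shared factors together with membership in $\Acal$ genuinely isolate one rank-one tensor exactly when $r\le r(d-f)$ (respectively $r\le r(f')$) — that is, verifying the hypotheses of Theorem~\ref{thm: match up patterns} at each link and that no exceptional behaviour of Theorem~\ref{thm:rmax_formula} tightens the bound for the intermediate tuples. Once that is settled, the reduction of the nested minima of the $r(\cdot)$ values to minima of codimensions is routine from the inequality $n_{\rm codim}^{g}<n_{\rm row}^{g}$.
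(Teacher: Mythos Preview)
Your approach is essentially the same as the paper's: the paper's own proof is a terse two-sentence argument that simply says the rank must satisfy $r\le r(g)$ for every flattening tuple $g$ invoked (either to extract rank-one tensors via Theorem~\ref{thm:decomp and singular vector} or to recover complementary factors via Theorem~\ref{thm: match up patterns}), and that the formulae then follow from Theorems~\ref{thm:rmax_formula} and~\ref{thm: match up patterns}. Your proposal traces through Algorithm~\ref{alg: recover complement} case by case, identifies precisely which tuples $g$ appear in each branch ($f$ alone in case~(1); $f$ and $d-f$ in case~(2); the chain $f\to f'\to d-f$ in case~(3)), and takes the minimum of the corresponding $r(g)$---this is exactly the paper's reasoning, written out in full rather than summarized. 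You additionally supply the reduction $\min\{r(f),r(d-f)\}=\min\{n_{\rm codim}^f,n_{\rm codim}^{d-f}\}$ via $n_{\rm col}^f=n_{\rm row}^{d-f}>n_{\rm codim}^{d-f}$, which the paper asserts but does not argue; your caveat about the exceptional branch of Theorem~\ref{thm:rmax_formula} is appropriate and does not change the outcome.
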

\begin{proof}
    For unique decomposition of $\sT$, the largest rank $r$ need to satisfy $r\leq r(g)$ for any flattening tuple $g$ used either to extract rank-one tensors via pSVTs as in Theorem \ref{thm:decomp and singular vector} or to recover complementary factors as in Algorithm \ref{alg: recover complement}. The formulae then follow directly from Proposition~\ref{thm:rmax_formula} and~\ref{thm: match up patterns}.
\end{proof}

\subsection{Deflation}\label{subsec:deflation}

We have shown how to recover one rank-one summand of $\sT$. We now address how to obtain the remaining components. In principle, each summand can be reached from a suitable initialization, but this may require many random restarts. Instead, we use deflation: once a rank-one summand is identified, we subtract it from $\sT$.

In the following example, we examine the basins of attraction of the pSVTs corresponding to different rank-one summands. 
Figure~\ref{fig:basin} shows that, for a rank-three tensor, the basins have comparable size. 
If \(\sT\) has rank \(r \le r(f)\), then \(\sT^{(f)}\) has \(r\) pSVTs with singular value one. 
Assuming basins of attraction of comparable size, the classical coupon collector problem~\cite[pp.~224]{feller1991introduction} implies that recovering all \(r\) components via random initialization requires on the order of
\[
r \left(1 + \frac{1}{2} + \cdots + \frac{1}{r}\right) \approx r \log r
\]
runs. This is one motivation for using deflation: it reduces the cost from \(\Ocal(r \log r)\) to \(\Ocal(r)\).
Another motivation is that recovering rank-one summands sequentially simplify the optimization landscape since the set of rank-one tensors is closed and the size of \(\sT^{(f)}\) decreases after each deflation step.

\begin{example}[Basins of attraction of the rank-one summands]
Let
\[
\sT=\sum_{i=1}^3 \fa_i\otimes \fb_i\otimes \fc_i\in\RR^{3\times 3\times 3},
\qquad
\|\fa_i\|=\|\fb_i\|=\|\fc_i\|=1.
\]
Fix \(f=(1,1,0)\). Let \(\sT^{(f)} \in\RR^{3\times 3\times 3}\) be the stack of an orthonormal basis of
\(\Acal=\colspan(\mathbf{T}^{(f)})\).
Each \((\fa_i,\fb_i)\) is part of a pSVT of
\(\sT^{(f)}\) with singular value \(1\), by Theorem~\ref{thm:decomp and singular vector}.

We run PS-HOPM (Algorithm~\ref{alg:psspm}) on \(\sT^{(f)}\) with
\(\fc \) initialized to \([1,x,y]\T/\|(1,x,y)\|\) and \(\fa,\fb\) to the top singular vector pair of
\(\sT^{(f)}(*,*,\fc)\). 
For \(10000\) pairs \((x,y)\in[-1,1]^2\), every run
converges to a pSVT containing some \((\fa_i,\fb_i)\). The basins of attraction are in Figure~\ref{fig:basin}. 

\begin{figure}
    \centering
    \includegraphics[width=0.5\linewidth]{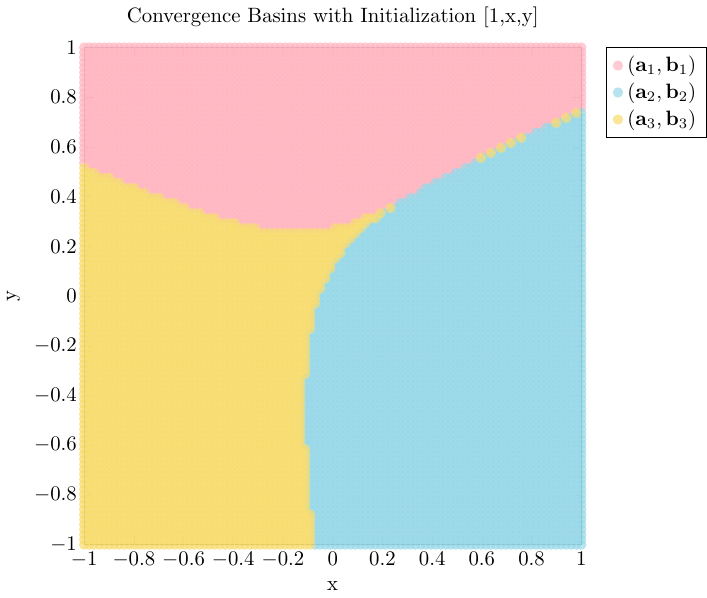}
    \caption{Basin of attraction for pSVTs of \(\sT^{(f)}\) under random initialization. 
Each point is an initialization vector \(\fc = [1, x, y]\T\). It is colored according to which rank-one component \((\fa_i, \fb_i)\) PS-HOPM converges to. 
The three pSVTs are all attractors, converged to with comparable frequency.
}
    \label{fig:basin}
\end{figure}
\end{example}

Let
\(
\sT
=\sum_{i=1}^r \lambda_i\, \bigotimes_{j=1}^\ell(\fv_{i}^{(j)})^{\otimes d_j}
\), 
where $\lambda_i\in\RR,\; \fv_{i}^{(j)}\in\SB^{m_j-1},$
and fix a tuple \(f=(f_1,\ldots,f_\ell)\).
Suppose we have identified a rank-one tensor 
\(
\sR_{(f)}:=\bigotimes_{i=1}^\ell(\fv_{1}^{(i)})^{\otimes f_i} \) and its complement \(
\sR_{(d-f)}:=\bigotimes_{i=1}^\ell(\fv_{1}^{(i)})^{\otimes (d_i-f_i)} \).
There is a unique \(\lambda_1\) such that the rank of
\[
\mathbf{T}^{(f)}
-\lambda_1\,\Vect(\sR_{(f)})\otimes\Vect(\sR_{(d-f)})
\]
drops by one,
by the Wedderburn rank reduction formula \cite{wedderburn1934lectures}. This \(\lambda_1\) is the coefficient of
\(\sR^{(1)}:=\bigotimes_{i=1}^\ell(\fv_{1}^{(i)})^{\otimes d_i}\)
in \(\sT\). Thus deflation removes \(\sT^{(1)}\) and reduces the problem
size by one.

The following proposition gives an explicit formula for $\lambda_1$ and orthonormal bases of the deflated subspaces.
The proof follows the same argument as \cite[Proposition~3.5]{kileel2025subspace}.

\begin{proposition}\label{prop:deflation}
Fix
\[
\sT
=\sum_{i=1}^r \lambda_i\, \bigotimes_{j=1}^\ell(\fv_{i}^{(j)})^{\otimes d_j},
\quad
f=(f_1,\ldots,f_\ell),\ 0\leq f_i\leq d_i.
\]
Let \(\mathbf{T}^{(f)}\) denote the \(f\)-flattening of \(\sT\) with
\[
\mathbf{T}^{(f)}=\mathbf{U} \mathbf{C}^{-1} \mathbf{V}\T,
\]
where \(\mathbf{U}\in\RR^{(\prod_{i=1}^\ell m_i^{\,f_i})\times r}, \mathbf{V}\in\RR^{(\prod_{i=1}^\ell m_i^{\,d_i-f_i})\times r}\) have orthonormal columns and 
\(\mathbf{C}\in\RR^{r\times r}\) is nonsingular.
Define \(\sR_{(f)}: = \bigotimes_{i=1}^\ell (\fv_{1}^{(i)})^{\otimes f_i}\) and \(\sR_{(d-f)}:= \bigotimes_{i=1}^\ell (\fv_{1}^{(i)})^{\otimes (d_i-f_i)}\). Then,
\begin{enumerate}
\item The coefficient of \(\sR^{(1)} = \bigotimes_{i=1}^\ell (\fv_{1}^{(i)})^{\otimes d_i}\) in $\sT$ is
\[
\lambda_1=\frac{1}{\fa_v\T \mathbf{C}\,\fa_u},
\qquad
\fa_u:=\mathbf{U}\T\Vect(\sR_{(f)}),\quad
\fa_v:=\mathbf{V}\T\Vect(\sR_{(d-f)}).
\]
\item Let \(\mathbf{O}_u,\mathbf{O}_v\in\RR^{r\times(r-1)}\) have orthonormal columns spanning
\(\Span\{\mathbf{C}\fa_u\}^\perp\) and \(\Span\{\mathbf{C}\fa_v\}^\perp\), respectively. Define
\[
 \hat{\mathbf{U}}:=\mathbf{U}\mathbf{O}_v,
\qquad \hat {\mathbf{C}}:=\mathbf{O}_u\T \mathbf{C} \mathbf{O}_v,\qquad \hat {\mathbf{V}}:=\mathbf{V}\mathbf{O}_u.
\]
Then
\begin{enumerate}
\item[(a)] \( \hat{\mathbf{U}},\hat {\mathbf{V}}\) have orthonormal columns and \(\hat {\mathbf{C}}\) is nonsingular;
\item[(b)] \(\hat {\mathbf{U}}\hat {\mathbf{C}}^{-1}\hat {\mathbf{V}}\T
= \mathbf{T}^{(f)}-\lambda_1\,\Vect(\sR_{(f)})\otimes\Vect(\sR_{(d-f)})\);
\item[(c)] The columns of \(\hat {\mathbf{U}}\) form an orthonormal basis of
$\{\Vect\Bigl(\bigotimes_{j=1}^\ell (\fv_{i}^{(j)})^{\otimes f_j}\Bigr) : i \geq 2 \}$

\end{enumerate}
\end{enumerate}
\end{proposition}

\subsection{Full Algorithm and Practical Considerations}
\label{subsec:practical_considerations}

The MSPM algorithm for CP decomposition of partially symmetric tensors combines subspace extraction, power iteration, and deflation. We flatten the input tensor along a well-chosen tuple $f$, extract the column span of the flattening, and iteratively recover rank-one components via PS-HOPM (Algorithm~\ref{alg:psspm}). After identifying each component, we find its complementary factors (Algorithm \ref{alg: recover complement}). We remove the recovered term using a deflation step. 
The full algorithm is Algorithm~\ref{alg:psspm-deflation}. In the remainder of this section, we cover some practical and computational aspects of the algorithm.

\begin{algorithm}[htbp]
	\caption{Multi-Subspace Power Method (MSPM)}
	\label{alg:psspm-deflation}
	\begin{algorithmic}
		\Require Tensor $\sT \in \bigotimes_{j=1}^\ell S^{d_j}(\RR^{m_j})$ with rank $r \leq r_{\max}(f)$ (or $r \leq r_{\max}(f,f')$)  and flattening tuple $f = (f_1,\dots,f_\ell)$ and $f'$ (if $f$ is not symmetry-breaking). Shift parameters $\gamma_1,\ldots,\gamma_\ell \geq0$, tolerances~$\texttt{f}_{\mathrm{tol}},\, \texttt{grad}_{\mathrm{tol}} > 0$.
		\Ensure Decomposition $\sT = \sum_{i=1}^r \lambda_i \bigotimes_{j=1}^\ell (\fv_i^{(j)})^{\otimes d_j}$
		
		\Section{Subspace Extraction}{\linewidth}
		\State Compute flattening $\mathbf{T}^{(f)} \gets \text{flatten}(\sT, f)$
		\State $(\mathbf{U}, \mathbf{D}, \mathbf{V}) \gets \texttt{svd}(\mathbf{T}^{(f)})$
		\State Update $(\mathbf{U}, \mathbf{D}, \mathbf{V})$ by truncating to top $r$ components
		\EndSection
		
		\For{$i = 1$ to $r$}
		\Section{PS-HOPM}{\linewidth}
		\State $\sT^{(f)} \gets$ reshape columns of $\mathbf{U}$ into tensors and stack 
		\State Use Algorithm~\ref{alg:psspm} to find pSVT $(\fv_i^{(1)},\ldots,\fv_i^{(\ell)})$ of $\sT^{(f)}$ with singular value one
		\State $\sR_{(i)}^{(f)} \gets \bigotimes_{j=1}^\ell (\fv_i^{(j)})^{\otimes f_j}$
		\EndSection
		
		\Section{Summand Completion}{\linewidth}
		\If{$f_i>0$ for all $i=1,\ldots,\ell$}
		\State Immediately obtain $\sR_{(i)}^{(d-f)} = \bigotimes_{j=1}^\ell (\fv_i^{(j)})^{\otimes (d_j-f_j)}$
		\ElsIf{$f$ is symmetry breaking}
		\State Use partially known factors and row span of $\mathbf{T}^{(f)}$ to obtain $\sR_{(i)}^{(d-f)}$ (see Fig.~\ref{fig:symmetrybreaking})
		\Else
		\State Use flattenings $f'$, $d - f$ to compute $\sR_{(i)}^{(d-f)}$ (see Fig.~\ref{fig:nonsymmetry})
		\EndIf
		\EndSection
		
		\Section{Deflation}{\linewidth}
		\State $\fa_u \gets \mathbf{U}\T \Vect(\sR_{(i)}^{(f)}),\quad \fa_v \gets \mathbf{V}\T \Vect(\sR_{(i)}^{(d-f)})$
		\State $\lambda_i \gets \left(\fa_v\T \mathbf{C} \fa_u\right)^{-1}$
		\If{$i < r$}
		\State $\mathbf{O}_u \gets$ orthonormal basis of $\Span\{\mathbf{C} \alpha_u\}^\perp$
		\State $\mathbf{O}_v \gets$ orthonormal basis of $\Span\{\mathbf{C} \alpha_v\}^\perp$
		\State Update $(\mathbf{U}, \mathbf{C}, \mathbf{V}) \gets (\mathbf{U} \mathbf{O}_v,\, \mathbf{O}_u\T \mathbf{C} \mathbf{O}_v,\, \mathbf{V} \mathbf{O}_u)$
		\EndIf%
		\EndSection%
		\EndFor%
		\Return $\{(\lambda_i, \fv_i^{(1)}, \ldots, \fv_i^{(\ell)})\}_{i=1}^r$
	\end{algorithmic}
\end{algorithm}

\subsubsection{Computational and storage costs}

The computational costs of Algorithm~\ref{alg:psspm-deflation} are as follows. Let $M_1 = \prod_{j=1}^\ell m_j^{a_j}$ and $M_2=\prod_{j=1}^{\ell} m_j^{d_j-a_j}$ be the number of rows and columns of $\mathbf{T}^{(f)}$, respectively. 
The computational cost in \textsc{Subspace Extraction} is dominated by $\operatorname{svd}(\mathbf{T}^{(f)})$, which is $\mathcal{O}(M_1 M_2 \min(M_1, M_2))$. However, if an upper bound for the rank $\tilde r\ge r$ is known a priori, this can be reduced to $\mathcal{O}(M_1M_2 \tilde r)$ via randomized linear algebra or iterative techniques. If the flattening $f$ is not symmetry breaking, the algorithm needs to calculate the SVD of the other flattening, with an additional cost of the same magnitude.%

The computational cost in \textsc{PS-HOPM} depends on the number of iterations until convergence. At iteration $i$ of the for loop, the successive deflation procedures have produced a tensor with $s = r - i + 1$ orthonormal slices. As explained in Section~\ref{sec: power method}, the number of multiplications needed for each power method iteration is $2 s M_1 + o(M_1)$. The computational cost of \textsc{Summand Completion} is $o(sM_2)$,
negligible compared with the other costs.

Finally, in \textsc{Deflation}, computing $\fa_u$ and $\fa_v$ cost $\mathcal{O}(s M_1)$ and $\mathcal{O}(s M_2)$ respectively, computing $\lambda_i$ is $\mathcal{O}(s^2)$ and updating $(\mathbf{U}, \mathbf{C}, \mathbf{V})$ is $\mathcal{O}(s (M_1 + M_2 + s))$.
The storage cost is also $\mathcal{O}(s(M_1 + M_2 + s))$, dominated by the storage of the SVD factorization of $\mathbf{T}^{(f)}$. If the flattening $f$ is not symmetry breaking, the SVD factorization of $\mathbf{T}^{(f')}$ also needs to be stored, incurring an additional storage cost.

\subsubsection{Low-rank approximation}

Although our theoretical analysis focuses on low-rank partially symmetric tensors, in experiments we observe empirically that MSPM succeeds in decomposing tensors that are only approximately low-rank (see Section~\ref{sec:experiments}). 

For approximately low-rank tensors, a flattening is also approximately low-rank. Its rank can be selected by examining the decay of matrix singular values and applying rank selection rules commonly used in principal component analysis \cite[Section 6.1]{Jolliffe2002PCA}. This yields a criterion for selecting the CP rank of a tensor. 

The \textsc{PS-HOPM} and \textsc{Summand Completion} routines rely on calculating pSVTs with singular value $1$. The presence of noise leads to the associated singular values having value less than $1$. Nevertheless, we observe empirically that the pSVTs obtained are still close to $1$, and the singular vectors close to the pSVTs of the true low-rank tensor. We accept pSVTs obtained by \textsc{PS-HOPM} if the singular value is $\ge 1-\zeta$, where $\zeta\ge 0$ is a hyperparameter. The stopping criterion for \textsc{PS-HOPM} is 
$$\max_{i} \|\fw^{(i)}-\fv^{(i)}\| \le \gamma,\quad\text{where}\quad  (\fw^{(1)},\dots,\fw^{(\ell)})= \text{\textsc{PMI}}(\sT; \fv^{(1)},\dots,\fv^{(\ell)}),$$
where $\gamma$ is another hyperparameter.
Finally, the approximation considerations for \textsc{Deflation} are analogous to those of SPM \cite[Section 3.3.5]{kileel2025subspace}.

\section{Numerical experiments}\label{sec:experiments}

In this section, we 
compare MSPM (Algorithm~\ref{alg:psspm-deflation}) to state-of-the-art approaches to see that it is accurate and fast. 
All experiments are run on an Apple M2 Pro with 16 GB memory.

\subsection{(2,1) Symmetry}

We generate the ground-truth tensor as 
$$\sT_{\mathrm{true}} = \sum_{i=1}^{r}\lambda_i \fa_i^{\otimes 2}\otimes \fb_i \in S^2(\mathbb{R}^{100})\otimes \RR^{50}, \quad r= 80.$$ 
The vectors $\fb_i$ are sampled with i.i.d. Gaussian entries and normalized to unit length. 
Tensors of this format appear in temporal networks~\cite{gauvin2014detecting}.
We consider two settings for $\fa_i$:  
\begin{itemize}
  \item Orthogonal case: $\fa_i$ are taken as the orthogonalized columns of a random Gaussian matrix in $\RR^{100\times 80}$, i.e. we sample $\fa_i$ via the Harr measure of the Stiefel manifold.
  \item Generic case: $\fa_i$ are sampled with i.i.d.\ Gaussian entries and normalized.  
\end{itemize}
The weights are chosen as $\lambda_i = \exp(2u_i-1)$ with $u_i \sim \mathcal{U}[0,1]$. 
We then add Gaussian noise
\[
\sT = \sT_{\mathrm{true}} + \eta \cdot \mathbscr{Z}, 
\]
scaled so that $\|\eta \cdot \mathbscr{Z}\| = 0.01 \|\sT_{\mathrm{true}}\|$, where $\mathbscr{Z}$ has i.i.d. standard Gaussian entries. 

\subsubsection{Orthogonal case}

We compare MSPM with nonlinear optimization algorithms (nonlinear least squares (NLS), unconstrained nonlinear optimization (MINF), alternating least squares (ALS)) implemented in tensorlab \cite{tensorlab3.0}, simultaneous diagonalization methods (FFDIAG \cite{ziehe2004fast}, Jacobi \cite{cardoso1996jacobi}, Jennrich), and the generalized SVD approach HOGSVD \cite{ponnapalli2011higher}.  
Due to the orthogonality, we also test SVD applied to $\sT.\mathrm{reshape}(100,5000)$, both directly and as initialization for NLS. 

We compare the recovered $\fa_i'$ with the true $\fa_i$, 
since diagonalization methods only recover the $\fa_i$.
We match the recovered $\fa_i'$ and the true $\fa_i$ via a greedy approach: we match $\fa_1$ with the vector among the $\fa_i'$s with the largest absolute cosine similarity, flip its sign if necessary, then proceed to $\fa_2$ with the remaining columns, and so on until all are matched. 
The \emph{Ascore} 
is the mean cosine similarity between the matched pairs
$$
\text{Ascore}(\{\fa_i\}_{i=1}^r, \{\fa_i'\}_{i=1}^r) = \frac{1}{r}\sum_{i=1}^r |\langle \fa_i, \fa_i'\rangle|.
$$

We plot log-runtime against log(1-Ascore) across the 10 algorithms in Figure \ref{fig:21orthogonal} using 40 randomly generated noisy tensors. 
Our MSPM method achieves both high accuracy and fast runtime. It appears in dark blue in the bottom left of the plot.

\begin{figure}[htbp]
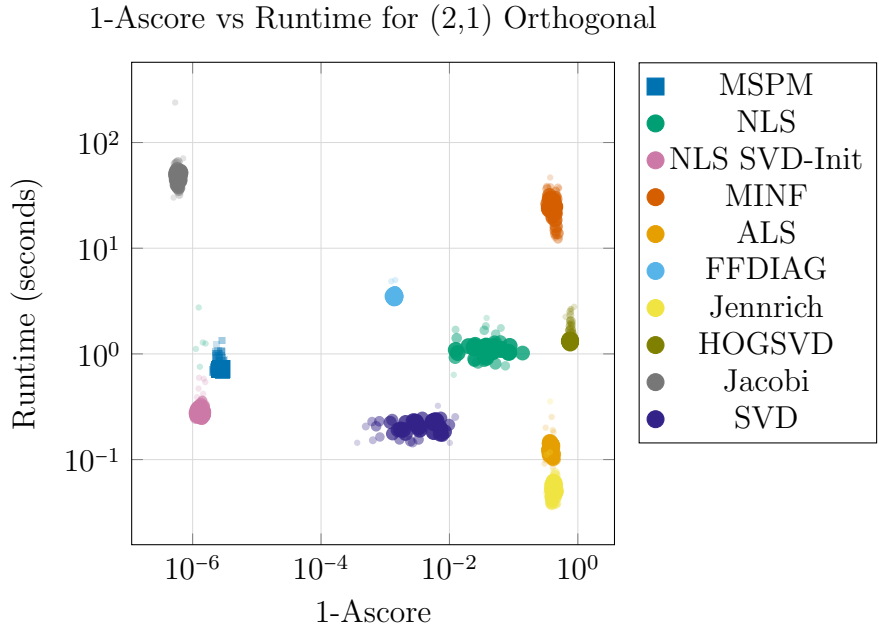

    \centering
    \includestandalone{tikzfigures_for_main/compare_21_orthogonal_fix_seed}
    \caption{Comparison of methods to decompose a $(2,1)$-partially symmetric tensor in terms of accuracy and runtime. The ground truth tensor is $\sT_{\mathrm{true}} = \sum_{i=1}^{80}\lambda_i \fa_i^{\otimes 2}\otimes \fb_i \in S^2(\mathbb{R}^{100})\times \RR^{50}$ where $\fa_i$ are orthogonal.  
    Larger, more opaque markers indicate runs that occurred with higher frequency across 40 trials. 
    }
    \label{fig:21orthogonal}
\end{figure}

\subsubsection{Generic Case}

When $\fa_i$ are drawn from normalized Gaussian vectors without orthogonalization, 
the problem is more challenging.  
We compare MSPM with nonlinear optimization algorithms (NLS, MINF, ALS) in tensorlab, 
simultaneous diagonalization methods (FFDIAG, QRJ1D \cite{afsari2006simple}, Jennrich), 
and the generalized SVD approach HOGSVD.

Accuracy is measured by Ascore, as above. 
Figure~\ref{fig:21nonorthogonal} reports log-runtime versus log(1-Ascore) across methods for 40 randomly generated noisy tensors.

\begin{figure}[htbp]
    \centering
    \includestandalone{tikzfigures_for_main/compare_21_nonortho_ellipse}
    \caption{Comparison of methods for decomposing a $(2,1)$-partially symmetric tensor in terms of accuracy and runtime. 
    The ground truth tensor is $\sT_{\mathrm{true}} = \sum_{i=1}^{80}\lambda_i \fa_i^{\otimes 2}\otimes \fb_i \in S^2(\mathbb{R}^{100})\times \RR^{50}$ where $\fa_i$ are generic Gaussian vectors normalized to unit length. The ellipses indicate the interquartile ranges (first and third quartiles) of runtime and Ascore. MSPM is the fastest among those achieving high accuracy, aside from a subset of NLS runs.  While one cluster of NLS runs attains higher accuracy than MSPM, another exhibits lower accuracy. This is reflected in the large interquartile range of NLS.
    }
    \label{fig:21nonorthogonal}
\end{figure}

\subsection{(4,1) Symmetry}

We generate the ground-truth low-rank tensor as
\[
\sT_{\mathrm{true}} = \sum_{i=1}^{r}\lambda_i \fa_i^{\otimes 4}\otimes \fb_i 
   \in S^4(\mathbb{R}^{25}) \otimes \RR^{10}, \quad r = 50,
\]
where $\fa_i$ and $\fb_i$ are sampled with i.i.d.\ Gaussian entries and normalized to unit norm.
These format of tensors appear in computational algebraic geometry, data analysis and machine learning~\cite{johnston2023computing,wang2024contrastive,usevich2025identifiability}. 
The weights are chosen as $\lambda_i = \exp(2u_i - 1)$ for $u_i \sim \mathcal{U}[0,1]$.  
We add Gaussian noise
\[
\sT = \sT_{\mathrm{true}} + \eta \cdot \mathbscr{Z},
\]
scaled so that $\|\eta \cdot \mathbscr{Z}\| = 0.01\|\sT_{\mathrm{true}}\|$, 
where $\mathbscr{Z}$ has i.i.d.\ standard Gaussian entries subject to $(4,1)$ symmetry.
For PS-HOPM, we use the $(2,1)$ flattening.  

We compare MSPM with optimization-based algorithms NLS, MINF, and ALS. In addition, we apply MSPM, NLS and MINF ignoring the partial symmetry, then restore it by averaging the learned factors. We refer to these variants as asym-avg.
We evaluate the accuracy of each algorithm by the reconstruction error
$$
\|\sT- \sum_{i=1}^{80} \lambda_i' \fa_i'^{\otimes 2} \otimes \fb_i' \|,
$$
where $\fa_i',\fb_i',\lambda_i'$ are the recovered factors and their coefficients.

Figure~\ref{fig:41} plots the runtime and reconstruction error in log-log scale for all seven methods over $40$ noisy synthetic tensors.  
Across all trials in the (4,1) case, MSPM is the most accurate and the fastest. The asym-avg variants reach similar accuracy but require longer runtimes. 
Enforcing symmetry within MSPM provides a 1.3× speedup.

\begin{figure}[htbp]
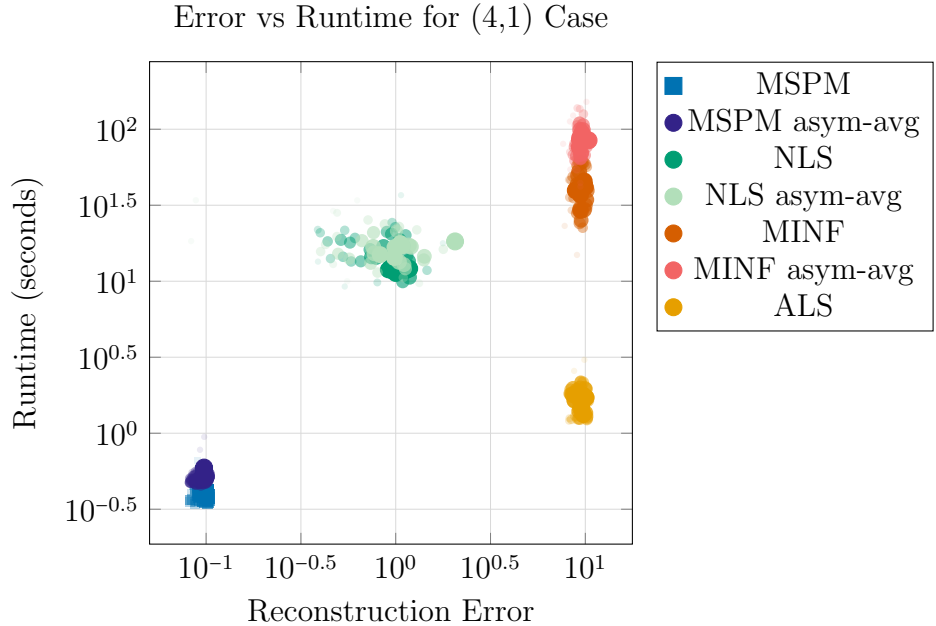

    \centering
\includestandalone{tikzfigures_for_main/compare_41_fix_seed}
    \caption{
Reconstruction error versus runtime for seven decomposition algorithms on $(4,1)$ partially symmetric tensors.  
Each point is one run on a synthetic noisy tensor of fixed size and rank.  
}
    \label{fig:41}
\end{figure}

\subsection{Order three CP decomposition}

Three-way tensors arise in a range of scientific and engineering domains~\cite{harshman1970foundations,kolda2009tensor,nickel2011three}.
We generate a ground-truth tensor $\sT_{\mathrm{true}} \in (\mathbb{R}^{100})^{\otimes 3}$ with rank $r = 80$:
\[
\sT_{\mathrm{true}} = \sum_{i=1}^{r} \lambda_i\, \fa_i \otimes \fb_i \otimes \fc_i, \quad  r=80
\]
where the $\fa_i, \fb_i, \fc_i$ are independently sampled from standard Gaussian distributions and normalized to unit norm. 
The weights are sampled as $\lambda_i = \exp(2u_i - 1)$ with $u_i \sim \mathcal{U}[0,1]$.
We add Gaussian noise
\[
\sT = \sT_{\mathrm{true}} + \eta \cdot \mathbscr{Z},
\]
scaled so that $\|\eta \cdot \mathbscr{Z}\| = 0.01\|\sT_{\mathrm{true}}\|$, 
where $\mathbscr{Z}$ has i.i.d.\ standard Gaussian entries.

We compare MSPM with six alternatives: NLS, MINF, ALS, Simulteneous diagonalization (SD) \cite{de2006link}, Simultaneous generalized Schur decomposition (SGSD) \cite{de2004computation} and Jennrich's Algorithm. 
Figure~\ref{fig:111} shows the runtime and reconstruction error (in log–log scale) of the seven algorithms over 40 synthetic noisy tensors generated as above.

\begin{figure}[htbp]
    \centering
    \includestandalone{tikzfigures_for_main/compare_111_ellipse}
    \caption{Comparison of decomposition methods for asymmetric tensors in $\RR^{100 \times 100 \times 100}$ with rank 80. 
    Each point is a single run.
    Ellipses indicate the interquartile ranges (first and third quartiles). MSPM exhibits stable performance and the fastest runtime among the accurate methods.
    } 
    \label{fig:111}
\end{figure}

To assess scalability, we consider a tensor of size $500\times 500\times 500$ with rank $r=400$, comparing MSPM, SGSD, SD, and NLS. The SD method does not terminate due to memory requirements. The runtimes and reconstruction errors are reported in Table~\ref{tab:large_scale}. MSPM achieves the fastest runtime while maintaining competitive accuracy.

\begin{table}[htbp]
\centering
\begin{tabular}{lcc}
\toprule
Method & Runtime (s) & Reconstruction Error \\
\midrule
MSPM & 163.6  & 0.2826 \\
SGSD & 1938.2 & 0.2795 \\
NLS  & 485.7  &  1.1332 \\
SD   & ---    & --- \\
\bottomrule
\end{tabular}
\caption{Performance on a $500\times 500\times 500$ tensor with rank $400$. 
SD does not terminate due to memory limitations.}
\label{tab:large_scale}
\end{table}

\section{Conclusion}\label{sec:conclusion}

We introduced the \emph{Multi-Subspace Power Method (MSPM)}, an algorithm for decomposing    low-rank tensors with arbitrary partial symmetries.  
It transforms a general input tensor into one with orthonormal slices. 
We showed that this transformation enables a large class of tensors to be decomposed via their partially symmetric singular vector tuples (pSVTs) of singular value one.
Further, we developed a \emph{Partially Symmetric Higher-Order Power Method (PS-HOPM)} to compute pSVTs and proved its global convergence to first-order critical points as well as local linear convergence in the two-block case to second-order critical points. 
We demonstrated that MSPM is a competitive  algorithm compared to prior methods, through numerical experiments on noisy tensors with symmetry types that are common in applications.  

Several open questions remain. One direction is to determine whether the local linear convergence result in Proposition~\ref{prop:local linear} can be extended to partially symmetric tensors with more than two blocks. 
Another question is whether the relationship between pSVTs and rank-one summands extends to tensors of higher rank, potentially revealing deeper algebraic or geometric structure in partially symmetric tensor decompositions.

\vspace{1.75em}

\begin{footnotesize} 
\noindent \textbf{Acknowledgments.}  
A.S. thanks Yue M. Lu for helpful discussions.
A.S. and K.W. thank Stephanie Hsu for implementing MSPM algorithms in PyTorch.

\noindent \textbf{Funding Declarations.}
A.S. was supported in part by the Sloan Foundation. J.P. was supported in part by a start-up grant from the University of Georgia. J.K. was supported in part by NSF DMS 2309782, NSF DMS 2436499, NSF CISE-IIS 2312746,
DE SC0025312, and the Sloan Foundation.

\noindent \textbf{Declarations}
The authors declare no conflicts of interest and no competing interests.
\end{footnotesize}

\bibliographystyle{plain}
\bibliography{reference}

\appendix


\section{The choice of flattening}
\label{sec:flattening}

\subsection{Proof of Proposition \ref{thm:rmax_formula}}\label{app: proof of rmax}

In this section, we prove Proposition \ref{thm:rmax_formula}.

Recall that we form a flattening matrix $\mathbf{T}^{(f)}$ from $\sT$, and then construct a tensor $\sT^{(f)}$ by reshaping and stacking an orthonormal basis of its column space. The pSVTs of $\sT^{(f)}$ with singular value one are in one-to-one correspondence with rank-one tensors in this column space, provided that $\rank(\sT^{(f)}) = \rank(\mathbf{T}^{(f)})$.

Each column of $\mathbf{T}^{(f)}$ is (the vectorization of) a partially symmetric tensor in $\bigotimes_{i=1}^\ell S^{f_i}(\mathbb{R}^{m_i})$. 
The set of rank-one tensors (up to scale) in this space is the Segre–Veronese variety of $\PP^{m_1-1}\times \cdots \times \PP^{m_\ell-1}$ embedded via multi-degree \( \mathcal{O}(f_1, \ldots, f_\ell) \), which we denote by $X \subseteq \mathbb{P}^{N-1}$, where $N = \prod_{i=1}^{\ell} \binom{m_i + f_i - 1}{f_i} = n_{\rm row}^f.$
The dimension of \( X \) is~\( \sum_{f_i\neq 0} (m_i - 1) \) and its codimension is
\begin{equation}
    n_{\rm codim}^f := \prod_{i=1}^{\ell} \binom{m_i + f_i - 1}{f_i} - 1 - \sum_{f_i \neq 0} (m_i-1).
\end{equation}
Since $X \subset \PP^{n_{\rm row}^f-1}$ and $m_i\geq 2$, we have $n_{\rm codim}^f < n_{\rm row}^f$. 
The roles of the rows and columns in $\mathbf{T}^{(f)}$ are not on equal footing: 
we seek rank-one tensors in the column span.

We will see that $\sT^{(f)}$ has a unique rank-$r$ decomposition when 
$\colspan\mathbf{T}^{(f)}$ contains exactly $r$ rank-one tensors (up to scale) with the prescribed partial symmetry.
This holds whenever $r \leq n_{\rm codim}^f$.  
It may hold with positive Lebesgue measure when 
$r = n_{\rm codim}^f + 1$, see \cite{ranestad2024real}.
The above discussion of $r(f)$ is formalized in Proposition \ref{thm:rmax_formula} with the following proof.

\begin{proof}[Proof of Proposition~\ref{thm:rmax_formula}]
Consider a rank $r$ tensor 
$\sT \in \bigotimes_{i=1}^\ell S^{d_i}(\RR^{m_i})$ with decomposition 
\[
\sT
= \sum_{i=1}^r \lambda_i \bigotimes_{j=1}^\ell \bigl(\fv^{(j)}_i\bigr)^{\otimes d_j},
\qquad
\lambda_i \in \RR\setminus\{0\},\ \fv^{(j)}_i \in \mathbb{S}^{m_j-1},
\]
where the coefficients $\lambda_i$ and the factors $\{\fv^{(j)}_i\}$ are generic.
For a fixed tuple $f=(f_1,\ldots,f_\ell)$, the flattening $\mathbf{T}^{(f)}$ is
\[
\mathbf{T}^{(f)}
= \sum_{i=1}^r \lambda_i \;
\underbrace{\Vect \Bigl(\bigotimes_{j=1}^\ell (\fv^{(j)}_i)^{\otimes f_j}\Bigr)}_{=\,\mathbf{a}_i}
\ \otimes\
\underbrace{\Vect \Bigl(\bigotimes_{j=1}^\ell (\fv^{(j)}_i)^{\otimes (d_j-f_j)}\Bigr)}_{=\,\mathbf{b}_i}.
\]
If $r \leq\min\{n_{\rm row}^f,n_{\rm col}^f\}$, then the vectors $\{\mathbf{a}_i\}_{i=1}^r$ and $\{\mathbf{b}_i\}_{i=1}^r$ are linearly independent, since this is a Zariski open condition on the factors $\fv^{(j)}_i$ and it is nonempty because the Segre-Veronese varieties are nondegenerate.  
Hence $\operatorname{rank}(\mathbf{T}^{(f)}) = r$ and
\[
\operatorname{colspan}(\mathbf{T}^{(f)})
= \Span\{\mathbf{a}_1,\ldots,\mathbf{a}_r\}
= \Span \Bigl\{\Vect\Bigl(\bigotimes_{j=1}^\ell (\fv^{(j)}_i)^{\otimes f_j}\Bigr): i=1,\ldots,r\Bigr\}.
\]

If $r\leq r(f)$, we show that the tensor $\sT^{(f)}$ has a unique rank $r$ decomposition, since $\bigotimes_{j=1}^\ell (\fv^{(j)}_i)^{\otimes f_j}$ for $i=1,\ldots,r$ are the only rank-one tensors in $\colspan \bT^{(f)}$ up to scale. 
Let $X\subseteq \PP^{N-1}$ denote the the Segre–Veronese variety parametrizing rank-one tensors in $\bigotimes_{j=1}^\ell S^{f_j}(\RR^{m_j})$ up to scale. 
If
\[
\dim \operatorname{colspan}(\mathbf{T}^{(f)}) + \dim X < N - 1
= \prod_{i=1}^{\ell} \binom{m_i + f_i - 1}{f_i} - 1,
\]
which is equivalent to $r \leq n_{\rm codim}^f$, then the intersection 
\(
\operatorname{colspan}(\mathbf{T}^{(f)}) \cap X
\)
consists of the $r$ points
\(
\bigotimes_{j=1}^\ell(\fv^{(j)}_i)^{\otimes f_j}
\)
for $i=1,\ldots,r$,
by the Generalized Trisecant Lemma~\cite[Proposition 2.6]{chiantini2002weakly}, 
When $f$ has two entries $i,j$ equal to one, with $\min\{m_i,m_j\}=2$, 
and all other entries zero, we have $X \cong \PP^{m_i-1} \times \PP^{m_j-1}$ with degree $\deg X = \max\{m_i,m_j\}$. 
When $r = n_{\rm codim}^f +1 = \max\{m_i,m_j\}$, the linear space $\operatorname{colspan}(\mathbf{\bT}^{(f)})$ has complementary dimension to $X$, so by Bézout’s theorem~\cite[Theorem 18.3]{harris2013algebraic} the intersection consists of exactly $\deg X=r$ points. For all other tuples $f$, we have $\deg X>r$ and there exist linear space of projective dimension $r-1$ that intersects $X$ in more than $r$ real points, see~\cite[Theorem 1.6]{ranestad2024real}.

If $r>r(f)$, there is an Euclidean open set of linear spaces $\operatorname{colspan}(\mathbf{\bT}^{(f)})$ that intersect $X$ in
more than $r$ real rank-one tensors \cite[Theorem 1.6]{ranestad2024real}. 
We can choose a different basis of $\operatorname{colspan}(\mathbf{\bT}^{(f)})$ consisting of rank-one tensors. This yields another rank $r$ decomposition of $\sT^{(f)}$, contradicting uniqueness.
\end{proof}

\subsection{Maximal admissible rank for given tensor type}
In this section, we describe how to choose the flattening to maximize the admissible rank of $\sT$.

The admissible rank of $\sT$ under MSPM depends on the flattening tuple $f$, and, in the non-symmetry-breaking case, also on auxiliary tuples such as $f'$ and $d-f$ used in the completion step, see Corollary~\ref{cor: r_max with matching}.
We seek flattenings that maximize the admissible rank.
\begin{definition}\label{def:r}
A tuple $f$ or a pair of tuples $f,f'$ (when $f$ is not symmetry-breaking) is called \emph{optimal} if it attains the maximal recovery rank, i.e.,
\[
r_{\max}(f) \quad \text{or} \quad r_{\max}(f,f')
\]
is maximal over all flattenings. 
\end{definition}

We obtain formulae for the optimal flattening for $(\ell,1)$–symmetric tensors with $\ell=2,3,4$, which arise in contrastive data analysis \cite{abid2018exploring,wang2024contrastive, wang2026multi}.

\begin{corollary}
Consider a partially symmetric tensor in 
$S^\ell(\mathbb{R}^n) \otimes \mathbb{R}^k$.  
The optimal flattening tuple depends on $(n,k)$ as follows. For \( \ell = 2\) and \( 3 \), the optimal tuple is \( f = (1,1) \).
     For \( \ell = 4 \), the optimal tuple is
    \[
    f = \begin{cases}
        (2,1), & k \leq \dfrac{n^2+3n-2}{2n-2}, \\[0.8em]
        (1,1), & k > \dfrac{n^2+3n-2}{2n-2}.
    \end{cases}
    \]
\end{corollary}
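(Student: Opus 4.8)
The plan is to turn the statement into a finite comparison of the numbers $r_{\max}(f)$ (and $r_{\max}(f,f')$) from Corollary~\ref{cor: r_max with matching}. In the paper's notation, $S^\ell(\RR^n)\otimes\RR^k$ has two blocks with $d=(\ell,1)$ and $m=(n,k)$, so a flattening tuple is $f=(f_1,f_2)$ with $0\le f_1\le\ell$ and $f_2\in\{0,1\}$. First I would specialize \eqref{eqn:ncol} and \eqref{eqn:ntrisecant}: for $f_1\ge 1$,
\[
n_{\rm codim}^{(f_1,1)}=k\binom{n+f_1-1}{f_1}-n-k+1,\qquad
n_{\rm codim}^{(f_1,0)}=\binom{n+f_1-1}{f_1}-n,\qquad
n_{\rm codim}^{(0,1)}=0,
\]
\[
n_{\rm col}^{(f_1,1)}=\binom{n+\ell-f_1-1}{\ell-f_1},\qquad
n_{\rm col}^{(f_1,0)}=k\binom{n+\ell-f_1-1}{\ell-f_1},
\]
and record that $(f_1,1)$ has all entries positive, $(f_1,0)$ with $1\le f_1\le\ell-1$ is symmetry-breaking with complement $(\ell-f_1,1)$, and the complementary pair $(\ell,0),(0,1)$ is not symmetry-breaking.

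Next I would prune the candidates. Because $n_{\rm codim}^{(1,0)}=\binom{n}{1}-n=0$ and $n_{\rm codim}^{(0,1)}=0$, parts (2)--(3) of Corollary~\ref{cor: r_max with matching} force $r_{\max}=0$ for $(1,0)$, for $(0,1)$, and for $(\ell,0)$ (whose only admissible partner $f'$ makes the complement $(0,1)$ enter the minimum). For a symmetry-breaking $(f_1,0)$ with $f_1\ge 2$, part~(2) gives $r_{\max}((f_1,0))=\min\{n_{\rm codim}^{(f_1,0)},n_{\rm codim}^{(\ell-f_1,1)}\}\le\binom{n+f_1-1}{f_1}-n$, which a short monotonicity check shows is dominated by the all-positive tuple $(\ell-f_1,1)$, hence by $(1,1)$ or $(2,1)$. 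Among the all-positive tuples $(f_1,1)$, the column count $n_{\rm col}^{(f_1,1)}$ strictly decreases in $f_1$, so for $f_1\ge 3$ one has $r((f_1,1))\le n_{\rm col}^{(f_1,1)}\le\binom{n+1}{2}=r((2,1))$ (the last equality holding for $k\ge 2,\ n\ge 2$ since then $n_{\rm codim}^{(2,1)}\ge\binom{n+1}{2}$). After this, only $f=(1,1)$ and $f=(2,1)$ can be optimal, with
\[
r_{\max}((2,1))=\binom{n+1}{2},\qquad
r_{\max}((1,1))=\min\Bigl\{\binom{n+\ell-2}{\ell-1},\,(n-1)(k-1)\Bigr\}
\]
(when $\min\{n,k\}=2$, the first coordinate of the left-hand $\min$ is increased by one, by the special-case clause of Theorem~\ref{thm:rmax_formula}).

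Finally I would compare the two survivors for each $\ell$. For $\ell=2$ the tuple $(2,1)$ equals $d$ and is excluded, so $(1,1)$ is the unique flattening with positive $r_{\max}$. For $\ell=3$ one computes $n_{\rm col}^{(2,1)}=n$, so $r_{\max}((2,1))=n$, while $r_{\max}((1,1))\ge n$ for all $k\ge 2$ (using the special case at $k=2$), so $(1,1)$ is optimal. For $\ell=4$ one has $\binom{n+\ell-2}{\ell-1}=\binom{n+2}{3}=\tfrac{n+2}{3}\binom{n+1}{2}\ge\binom{n+1}{2}$, so $r_{\max}((1,1))\ge r_{\max}((2,1))$ exactly when $(n-1)(k-1)\ge\binom{n+1}{2}$, i.e.\ when $k\ge 1+\tfrac{n(n+1)}{2(n-1)}=\tfrac{n^2+3n-2}{2n-2}$; this yields the stated dichotomy, $(2,1)$ being optimal below the threshold and $(1,1)$ above.

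The step I expect to be the main obstacle is the pruning in the second paragraph: one must apply parts (2) and (3) of Corollary~\ref{cor: r_max with matching} correctly --- tracking, for each non-all-positive flattening, which complement $d-f$ or auxiliary tuple $f'$ enters $r_{\max}$ --- and then verify the handful of monotonicity inequalities among binomial coefficients that eliminate every candidate except $(1,1)$ and $(2,1)$. Once those are in hand, the optimal-flattening statement follows from the single elementary inequality above.
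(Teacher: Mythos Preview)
Your proposal is correct and follows essentially the same route as the paper: reduce to the all-positive flattenings $(f_1,1)$ by invoking Corollary~\ref{cor: r_max with matching}, then compare the two survivors $(1,1)$ and $(2,1)$ via the explicit formulas from Theorem~\ref{thm:rmax_formula}. The paper's proof compresses your pruning paragraph into a single sentence (``all other tuples need to pair with some $(k,1)$ tuple, which yields smaller $r_{\max}$'') and, for $\ell=4$, computes $r((3,1))$ explicitly rather than using your monotonicity-of-$n_{\rm col}$ argument, but the logic is identical.
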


\begin{proof}
We restrict to flattenings $(k,1)$, since all other tuples need to pair with some $(k,1)$ tuple, which yields smaller $r_{\max}$, by Corollary~\ref{cor: r_max with matching}.
For $\ell=2$, the only tuple with format $(*,1)$ is $(1,1)$, which is completable. Thus, $(1,1)$ is optimal.
For $\ell=3$, we have
\begin{itemize}
\item 
   $ r(1,1) = 
    \begin{cases}
    \min\{\binom{n+1}{2}, (n-1)(k-1)\}, & \min\{n,k\}>2, \\[0.3em]
    \min\{3,k\}, & n=2, \\[0.3em]
    n, & k=2.
    \end{cases}$
    \item \( r(2,1) = \min \left\{ n, \binom{n+1}{2}k -n-k+1 \right\}=n \).
\end{itemize}
Since $r(1,1)\ge r(2,1)$ in all cases, the tuple $(1,1)$ is optimal.

For $\ell=4$, the three candidates are $(1,1), (2,1)$ and $(3,1)$. They yield
\begin{itemize}
\item \(
r(1,1) =
\begin{cases}
\min\!\left\{ \binom{n+2}{3},\ (n - 1)(k - 1) \right\}, & n > 2, \\[0.6ex]
\min\{4,\,k\}, & n = 2, \\[0.6ex]
\min\{4,\,n\}, & k = 2.
\end{cases}
\)
\item \( r(2,1) = \min \left\{ \binom{n+1}{2},\ \binom{n+1}{2}k + 1 - n - k \right\} = \binom{n+1}{2} \)
\item \( r(3,1) = \min \left\{ n,\ \binom{n+2}{3}k + 1 - n - k \right\} = n \)
\end{itemize}
We have $r(3,1)\leq r(2,1)$, so $(3,1)$ cannot be optimal.
When $n=2$ or $k=2$, $(2,1)$ dominates. 
For $n>2,k>2$, $(1,1)$ becomes optimal once $(n-1)(k-1)>\binom{n+1}{2}$, i.e.
\[
k > \frac{n^2+3n-2}{2n-2}.
\qedhere \]
\end{proof}

\section{Computing pSVTs}
Our partially symmetric tensor decomposition computes pSVTs with singular value one of a tensor $\sT^{(f)}$ 
with last slices orthonormal.
In computations, to speed up convergence, we introduce a tensor $\widetilde{\sT}^{(f)}$ such that the pSVTs of $\sT^{(f)}$ and $\widetilde{\sT}^{(f)}$ are in one-to-one correspondence. 

The tensor $\widetilde{\sT}^{(f)}$ is a symmetrized combination of the orthonormal slices of $\sT^{(f)}$. 
The shift parameters for $\widetilde{\sT}^{(f)}$ can be chosen to be less than one, which is smaller than the shifts in Theorem \ref{thm:power_method_convergence}.
Smaller shifts imply a faster increase in the function value and hence faster convergence.
For numerical computations, it is not necessary to form $\widetilde{\sT}^{(f)}$ explicitly, since we only require its contractions with rank-one tensors, which can be computed using $\sT^{(f)}$.

\begin{definition}[Partial symmetrization $P_{\mathrm{sym}}$ and tensor $\widetilde{\sT}^{(f)}$]\label{def: Ttilde}
Fix a partially symmetric tensor 
$\sT^{(f)} \in \bigotimes_{i=1}^\ell S^{f_i}(\RR^{m_i}) \otimes \RR^r$
whose last slices $\sS_1,\dots,\sS_r$ are orthonormal.  
Define the \emph{partial symmetrization operator} $P_{\mathrm{sym}}$ to be the linear projection
\[
P_{\mathrm{sym}} \colon\,
\Bigl( \bigotimes_{i=1}^\ell S^{f_i}(\RR^{m_i}) \Bigr)^{\otimes 2}
\;\to\;
\bigotimes_{i=1}^\ell S^{2f_i}(\RR^{m_i}).
\]
Define \(
\widetilde{\sT}^{(f)} := \sum_{i=1}^r P_{\mathrm{sym}}(\sS_i \otimes \sS_i)\).
\end{definition}

Notice the order of $\widetilde{\sT}^{(f)}$ is double the order of $\sT^{(f)}$. The next proposition gives shifts $\gamma_i$ 
that ensure global convergence of Algorithm~\ref{alg:psspm} applied to $\widetilde{\sT}^{(f)}$.

\begin{proposition}\label{prop: shift for Ttilde A}
Fix a partially symmetric tensor $\sT^{(f)} \in \bigotimes_{i=1}^\ell S^{f_i}(\RR^{m_i}) \otimes \RR^r$ 
whose last slices $\sS_1, \ldots, \sS_r$ are orthonormal. Let $\widetilde{\sT}^{(f)}$ be defined as in Definition \ref{def: Ttilde}.
Then Algorithm~\ref{alg:psspm} applied to $\widetilde{\sT}^{(f)}$ converges globally to a pSVT of $\widetilde{\sT}^{(f)}$ with shifts
\begin{enumerate}
    \item[(a)] $\gamma_i = 0$ when $f_i = 1$.
    \item[(b)] $\gamma_i = \sqrt{\tfrac{f_i-1}{f_i}}\,h(\nu)$ when $f_i > 1$, 
    where $F(\fv^{(1)},\ldots,\fv^{(\ell)}) \leq\nu$ and
    \[
    h(\nu) =
    \begin{cases}
    1 - \tfrac{\nu}{2}, & \nu \leq\tfrac{2}{3}, \\[0.5em]
    \sqrt{2\nu(1-\nu)}, & \nu > \tfrac{2}{3}.
    \end{cases}
    \]
\end{enumerate}
\end{proposition}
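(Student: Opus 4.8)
The plan is to apply Theorem~\ref{thm:power_method_convergence} to the tensor $\widetilde{\sT}^{(f)} \in \bigotimes_{i=1}^\ell S^{2f_i}(\RR^{m_i})$, so that the exponents appearing in that theorem are $a_i = 2f_i$. By Theorem~\ref{thm:power_method_convergence}, global convergence is guaranteed once the block restrictions of the objective $F$ are convex, and a sufficient condition is $\gamma_i \geq (a_i - 1)\|\widetilde{\sT}^{(f)}\|_2 = (2f_i - 1)\|\widetilde{\sT}^{(f)}\|_2$. The task is thus to produce a \emph{better} sufficient condition for blockwise convexity that yields the claimed smaller shifts, exploiting the special structure $\widetilde{\sT}^{(f)} = \sum_{i=1}^r P_{\mathrm{sym}}(\sS_i \otimes \sS_i)$ with $\{\sS_i\}$ orthonormal.

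First I would compute the block Hessian $\nabla^2 F_i(\fv^{(i)})$ of $F$ at a point on the sphere, using the general formula~\eqref{eq:hess_Fi} with $a_i = 2f_i$. The key observation is that, because of the $P_{\mathrm{sym}}$ structure and orthonormality of the slices, the relevant contraction $\widetilde{\sT}^{(f)} \cdot \bigotimes_j (\fv^{(j)})^{\otimes(a_j - 2\delta_{ij})}$ is a \emph{positive semidefinite} combination of outer products of vectors $\sS_i$ contracted in the remaining blocks — essentially a Gram-type matrix — rather than an arbitrary symmetric tensor. So instead of bounding its smallest eigenvalue by $-\|\widetilde{\sT}^{(f)}\|_2$, one gets a bound of the form $-(\text{something controlled by }\nu)$, where $\nu$ bounds the current objective value $F$. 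Concretely, when $f_i = 1$ the Hessian term $a_i(a_i-1)\widetilde{\sT}^{(f)}\cdot(\cdots)$ with $a_i = 2$ should itself be PSD (it is a sum of squares of the contracted slices), which gives convexity with $\gamma_i = 0$; this handles case (a). For $f_i > 1$, one reorganizes~\eqref{eq:hess_Fi}, extracts the scalar normalization $\sqrt{(f_i-1)/f_i}$ from matching the $a_i(a_i-1)$ and $a_i(a_i-2)\gamma_i$ coefficients against a rank-one correction, and bounds the residual negative part by $h(\nu)$. The two-case definition of $h(\nu)$ (linear for $\nu \le 2/3$, the square-root branch for $\nu > 2/3$) strongly suggests optimizing a quadratic-versus-square-root tradeoff: the quantity being bounded is something like $\max$ of a linear expression and $\sqrt{2\nu(1-\nu)}$ arising from a Cauchy–Schwarz / AM–GM split of a cross term, and one takes whichever branch is tighter at the given $\nu$.

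The main obstacle I expect is the precise spectral bookkeeping in the $f_i > 1$ case: one must carefully separate the block Hessian into (i) the genuinely PSD ``sum of squares'' part coming from $P_{\mathrm{sym}}$, (ii) the rank-one term $a_i(a_i-2)\gamma_i \|\fv^{(i)}\|^{a_i-4}(\fv^{(i)})^{\otimes 2}$, (iii) the $a_i\gamma_i\|\fv^{(i)}\|^{a_i-2}I$ term, and (iv) the leftover indefinite cross terms, and then show the leftover indefinite part has spectral norm at most $\gamma_i \cdot \text{const}$ whenever $F \le \nu$. The bound $F \le \nu$ enters because the contracted slices have norms controlled by how close $\bigotimes_j(\fv^{(j)})^{\otimes f_j}$ is to the column span of the slice matrix, and $F$ (which here equals the singular value plus shift terms) quantifies exactly this alignment; translating ``$F \le \nu$'' into ``the negative eigenvalue of the contraction is $\ge -h(\nu)$ after scaling'' is the crux. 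I would verify the two branches of $h$ by checking they meet continuously at $\nu = 2/3$ (indeed $1 - 1/3 = 2/3$ and $\sqrt{2\cdot\tfrac23\cdot\tfrac13} = \sqrt{4/9} = 2/3$) and that each is the correct bound on its interval, then conclude via Theorem~\ref{thm:power_method_convergence} that the stated shifts make every block restriction convex and hence Algorithm~\ref{alg:psspm} converges globally to a pSVT of $\widetilde{\sT}^{(f)}$.
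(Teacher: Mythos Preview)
Your proposal is correct and follows essentially the same route as the paper: reduce to Theorem~\ref{thm:power_method_convergence} by verifying blockwise convexity of $F$, handle $f_i=1$ by observing the block restriction of $F_\Acal=\sum_k\langle\sS_k,\bigotimes_j(\fv^{(j)})^{\otimes f_j}\rangle^2$ is a PSD quadratic form (so $\gamma_i=0$ suffices), and for $f_i>1$ compute the block Hessian explicitly, isolate the PSD sum-of-squares part, and bound the remaining indefinite cross term. The paper finishes that last step by decomposing $\fy=\alpha\fv^{(i)}+\beta\bar\fy$ with $\bar\fy\perp\fv^{(i)}$ and invoking \cite[Lemma~4.4]{kileel2025subspace} for the sharp bound $-\sqrt{(f_i-1)/f_i}\,h(\nu)$, which is precisely the optimization/splitting argument you anticipated.
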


\begin{proof}
We show that the shifts make 
\begin{equation}\label{eq:F-tildeA}
F(\fv^{(1)},\ldots,\fv^{(\ell)})
:= \Big\langle \widetilde{\sT}^{(f)},\;\bigotimes_{i=1}^\ell (\fv^{(i)})^{\otimes 2f_i} \Big\rangle
\;+\;\sum_{i=1}^\ell \gamma_i \,\|\fv^{(i)}\|^{2f_i}
\end{equation}
convex in each block. Global convergence of Algorithm~\ref{alg:psspm} then follows from Theorem~\ref{thm:power_method_convergence}.
Consider block $i$.
If $f_i=1$, then
the $i$-th block of $F$
is a quadratic form hence convex, so $\gamma_i=0$ suffices. This proves (a).
When $f_i>1$, Lemma~\ref{lem: T tilda A} gives the identity
$$
F_\Acal(\fv^{(1)},\ldots,\fv^{(\ell)})=\Big\langle \widetilde{\sT}^{(f)}, \;\bigotimes_{i=1}^\ell (\fv^{(i)})^{\otimes 2f_i}\Big\rangle = \sum_{i=1}^r \langle \sS_i,\bigotimes_{i=1}^\ell (\fv^{(i)})^{\otimes f_i}\rangle^2.
$$
Its Hessian is
$$
\nabla_i^2F_\Acal = 2f_i^2\sum_{i=1}^r \bigl(\sS_i \cdot \bigotimes_{j=1}^\ell (\fv^{(j)})^{\otimes (f_i-\delta_{ij})}\bigr)^{\otimes 2}
+ 2f_i(f_i-1)\sum_{i=1}^r\langle \sS_i,\bigotimes_{i=1}^\ell (\fv^{(i)})^{\otimes f_i}\rangle \sS_i\cdot\bigotimes_{j=1}^\ell (\fv^{(j)})^{\otimes (f_i-2\delta_{ij})}.
$$
It follows that 
$$
\nabla^2_i F = \nabla_i^2F_\Acal + \nabla^2_i \sum_{i=1}^\ell \gamma_i \|\fv^{(i)}\|^{2f_i} = \nabla_i^2F_\Acal + 4f_i(f_i-1)\gamma_i\|\fv^{(i)}\|^{2f_i-4}(\fv^{(i)})^{\otimes 2} + 2f_i \gamma_i \|\fv^{(i)}\|^{2f_i-2}I.
$$
Let $\mathbscr{S}_{j\setminus i}(\fv)$ denote $\sS_j\cdot \bigotimes_{k\neq i} (\fv^{(k)})^{\otimes f_k} $.
For any $\fy \in \SB^{m_i-1}$, we obtain
\begin{align*}
\frac{1}{2f_i}\, \fy\T \nabla^2_i F(\fv^{(1)},\ldots,\fv^{(\ell)})\,\fy
 &= f_i\sum_{j=1}^r \Big\langle \mathbscr{S}_{j\setminus i}(\fv),\ (\fv^{(i)})^{\otimes f_i-1}\otimes \fy \Big\rangle^2 \\
&\quad + (f_i-1) \sum_{j=1}^r \langle \sS_i,\bigotimes_{i=1}^\ell (\fv^{(i)})^{\otimes f_i}\rangle \langle \mathbscr{S}_{j\setminus i}(\fv),\ (\fv^{(i)})^{\otimes f_i-2}\otimes \fy^{\otimes 2}\rangle 
\\
&\quad +2(f_i-1)\gamma_i \langle \fv^{(i)},\fy\rangle^2+ \gamma_i\\
& \geq (f_i-1) \sum_{j=1}^r \langle \sS_i,\bigotimes_{i=1}^\ell (\fv^{(i)})^{\otimes f_i}\rangle \langle \mathbscr{S}_{j\setminus i}(\fv),\ (\fv^{(i)})^{\otimes f_i-2}\otimes \fy^{\otimes 2}\rangle + \gamma_i\\
& = (f_i-1) \langle P_\Acal(\bigotimes_{i=1}^\ell (\fv^{(i)})^{\otimes f_i})\cdot \bigotimes_{j\neq i} (\fv^{(j)})^{\otimes f_j}, (\fv^{(i)})^{\otimes f_i-2}\otimes \fy^{\otimes 2}\rangle + \gamma_i.
\end{align*}

We note that
$$
\langle P_\Acal(\bigotimes_{i=1}^\ell (\fv^{(i)})^{\otimes f_i})\cdot \bigotimes_{j\neq i} (\fv^{(j)})^{\otimes f_j}, (\fv^{(i)})^{\otimes f_i} \rangle 
= \|P_\Acal(\bigotimes_{i=1}^\ell (\fv^{(i)})^{\otimes f_i})\|^2.
$$
We write the vector $\fy$ as 
\(\fy = \alpha \fx + \beta \bar{\fy}\), where \(\bar{\fy}\perp \fx\), and use the same argument as for \cite[Lemma 4.4]{kileel2025subspace} to obtain
\[
\frac{1}{2f_i}\, \fy\T \nabla^2_i F(\fv)\,\fy
\;\ge\; -\sqrt{\tfrac{f_i-1}{f_i}}\,h(\nu) + \gamma_i,
\]
where $h(\nu)$ is as defined in the statement.  
\end{proof}

The shifts for $\widetilde{\sT}^{(f)}$ are strictly less than 1 and can be smaller than the shifts in Theorem \ref{thm:power_method_convergence}.
To connect the construction of $\widetilde{\sT}^{(f)}$ with our optimization problem, we introduce a projection operator $P_\Acal$ and objective function $F_\Acal$, which measures how close a rank-one tensor is to the subspace $\Acal$.
We then relate $F_\Acal$ to the tensors $\sT^{(f)}$ and $\widetilde{\sT}^{(f)}$.

\begin{definition}[Projection and objective function]\label{def:functions}
Let $\sT^{(f)}\in \bigotimes_{i=1}^\ell S^{f_i}(\RR^{m_i})\bigotimes \RR^r$ be a partially symmetric tensor with last slices orthonormal that span $\Acal \subseteq \bigotimes_{i=1}^{\ell} S^{f_i}(\RR^{m_i})$.

\begin{itemize}
    \item The projection $P_\Acal$ is the orthogonal projection onto $\Acal$ of a tensor or its vectorization.
    \item The objective function $F_\Acal$ quantifies proximity to $\Acal$:
    \[
    F_\Acal \colon \,\,\prod_{i=1}^\ell \mathbb{S}^{m_i-1} \to \mathbb{R}, 
    \quad 
    F_\Acal(\fv^{(1)},\ldots,\fv^{(\ell)}) 
= \Bigl\| P_\Acal\,\Bigl( \bigotimes_{i=1}^\ell \bigl(\fv^{(i)}\bigr)^{\otimes f_i} \Bigr) \Bigr\|^2.
    \]
\end{itemize}
\end{definition}

We relate pSVTs to the objective $F_\Acal$. The pSVTs of $\sT^{(f)}$ are in one-to-one correspondence with those of $\widetilde{\sT}^{(f)}$, which are precisely the global maximizers of $F_\Acal$.

\begin{lemma}\label{lem: T tilda A}
Fix a tensor $\sT^{(f)} \in \bigotimes_{i=1}^\ell S^{f_i}(\RR^{m_i}) \otimes \RR^r$ 
whose last slices $\sS_1, \ldots, \sS_r$ are orthonormal with span $\Acal$.
\begin{enumerate}
    
    \item[(1)] The tuple $(\fv^{(1)},\ldots,\fv^{(\ell)}) \in \mathbb{S}^{m_1-1}\times\cdots\times \mathbb{S}^{m_\ell-1}$ is a pSVT of $\widetilde{\sT}^{(f)}$ with singular value $\sigma^2$ if and only if $(\fv^{(1)},\ldots,\fv^{(\ell)},\fw)$ is a pSVT of $\sT^{(f)}$ with singular value $\sigma$, where
    \(
    \fw \) is the vector \( \sT^{(f)} \cdot \bigotimes_{i=1}^\ell {(\fv^{(i)})}^{\otimes f_i} \), rescaled to be norm one.

    \item[(2)] 
  The  critical points of $F_\Acal$ are the pSVTs of $\widetilde{\sT}^{(f)}$.
    The function $F_\Acal$ takes values in $[0,1]$ with $1$ attained when $\bigotimes_{i=1}^\ell (\fv^{(i)}){}^{\otimes f_i}\in \Acal$.

\end{enumerate}
\end{lemma}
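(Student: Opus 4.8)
The plan is to reduce both parts of the lemma to a single scalar identity. Write $\sR := \bigotimes_{i=1}^\ell (\fv^{(i)})^{\otimes f_i}$, let $\bM$ be the flattening of $\sT^{(f)}$ along the tuple $(f_1,\ldots,f_\ell,0)$ — so the columns of $\bM$ are the vectorizations of the orthonormal slices $\sS_1,\ldots,\sS_r$, and $\bM\bM\T$ represents $P_\Acal$ on vectorizations — and extend $F_\Acal$ to all of $\RR^{m_1}\times\cdots\times\RR^{m_\ell}$ by the same formula. First I would prove that, as polynomials in $(\fv^{(1)},\ldots,\fv^{(\ell)})$,
\[
\widetilde{\sT}^{(f)} \cdot \bigotimes_{i=1}^\ell (\fv^{(i)})^{\otimes 2f_i} \;=\; \bigl\langle P_\Acal\Vect(\sR),\,\Vect(\sR)\bigr\rangle \;=\; F_\Acal(\fv).
\]
Indeed the left side equals $\sum_{k=1}^r \bigl\langle P_{\mathrm{sym}}(\sS_k\otimes\sS_k),\,\bigotimes_i (\fv^{(i)})^{\otimes 2f_i}\bigr\rangle$; since $\bigotimes_i (\fv^{(i)})^{\otimes 2f_i}$ already lies in $\bigotimes_i S^{2f_i}(\RR^{m_i})$, the image of the orthogonal projection $P_{\mathrm{sym}}$, the symmetrization is invisible and each term equals $\langle \sS_k\otimes\sS_k,\,\sR\otimes\sR\rangle = \langle\sS_k,\sR\rangle^2$; summing and using orthonormality of the $\sS_k$ gives $\sum_k\langle\sS_k,\sR\rangle^2 = \|P_\Acal(\sR)\|^2 = F_\Acal(\fv)$.

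Part (2) then follows quickly. Restricted to $\prod_i\SB^{m_i-1}$, $F_\Acal$ is the full contraction $\fv\mapsto \widetilde{\sT}^{(f)}\cdot\bigotimes_i(\fv^{(i)})^{\otimes 2f_i}$, whose critical points on the product of spheres are exactly the pSVTs of $\widetilde{\sT}^{(f)}$ by \cite{lim2005singular}; hence the critical points of $F_\Acal$ are the pSVTs of $\widetilde{\sT}^{(f)}$. For the range, $0\le\|P_\Acal(\sR)\|^2\le\|\sR\|^2 = \prod_i\|\fv^{(i)}\|^{2f_i} = 1$ on the product of spheres, with equality on the right iff $P_\Acal(\sR)=\sR$, i.e.\ $\sR\in\Acal$.

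For part (1) I would differentiate the scalar identity block by block. Its $j$-th gradient is $2f_j\,\widetilde{\sT}^{(f)}\cdot\bigotimes_i(\fv^{(i)})^{\otimes(2f_i-\delta_{ij})}$ on the left, by partial symmetry of $\widetilde{\sT}^{(f)}$; on the right, differentiating $\langle P_\Acal\Vect(\sR),\Vect(\sR)\rangle$ through $\Vect(\sR)=\bigotimes_i(\fv^{(i)})^{\otimes f_i}$ gives $2f_j\,P_\Acal(\sR)\cdot\bigotimes_i(\fv^{(i)})^{\otimes(f_i-\delta_{ij})}$ (the symmetrization implicit in the derivative of $(\fv^{(j)})^{\otimes f_j}$ drops out since $P_\Acal(\sR)$ is partially symmetric). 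Cancelling $2f_j$,
\[
\widetilde{\sT}^{(f)}\cdot\bigotimes_{i=1}^\ell(\fv^{(i)})^{\otimes(2f_i-\delta_{ij})} \;=\; P_\Acal(\sR)\cdot\bigotimes_{i=1}^\ell(\fv^{(i)})^{\otimes(f_i-\delta_{ij})}, \qquad j=1,\ldots,\ell.
\]
Now unwind the pSVT equations of $\sT^{(f)}$, which has symmetry type $(f_1,\ldots,f_\ell,1)$. The last-block equation is $\sT^{(f)}\cdot\sR = \bM\T\Vect(\sR) = \sigma\fw$, so (given $\|\fw\|=1$ and $\sigma>0$) $\fw$ must be $\sT^{(f)}\cdot\sR$ rescaled to unit norm and $\sigma = \|\bM\T\Vect(\sR)\| = \|P_\Acal(\sR)\|$; then $\sT^{(f)}\cdot\fw$ has vectorization $\bM\fw = \tfrac1\sigma\bM\bM\T\Vect(\sR) = \tfrac1\sigma\Vect(P_\Acal(\sR))$, so the block-$j$ equations become $P_\Acal(\sR)\cdot\bigotimes_i(\fv^{(i)})^{\otimes(f_i-\delta_{ij})} = \sigma^2\fv^{(j)}$ — which, by the gradient identity, are precisely the pSVT equations $\widetilde{\sT}^{(f)}\cdot\bigotimes_i(\fv^{(i)})^{\otimes(2f_i-\delta_{ij})} = \sigma^2\fv^{(j)}$ with singular value $\sigma^2$. (Contracting the $j$-th equation with $\fv^{(j)}$ returns $\sigma^2 = \|P_\Acal(\sR)\|^2 = F_\Acal(\fv)$, so everything is consistent with $\|\fw\|^2 = \tfrac1{\sigma^2}\|P_\Acal(\sR)\|^2 = 1$.) Running this backwards — given a pSVT $(\fv^{(1)},\ldots,\fv^{(\ell)})$ of $\widetilde{\sT}^{(f)}$ with value $\sigma^2 = F_\Acal(\fv)$, set $\fw := \tfrac1\sigma\,\sT^{(f)}\cdot\sR$ and verify $\|\fw\|=1$ and all $\ell+1$ pSVT equations of $\sT^{(f)}$ — yields the converse.

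The arguments are not deep; the main care needed is with the reshape conventions — treating $\sR$ simultaneously as a partially symmetric tensor, as the Kronecker vector $\Vect(\sR)=\bigotimes_i(\fv^{(i)})^{\otimes f_i}$, and as the argument of $P_{\mathrm{sym}}$, $P_\Acal$, and the contractions — and, in the scalar identity, with the fact that $P_{\mathrm{sym}}$ is an orthogonal projection whose image contains every $\bigotimes_i(\fv^{(i)})^{\otimes 2f_i}$; this transparency of the symmetrization is the only real point. I would also flag the degenerate case $\sigma=0$ (equivalently $\sR\perp\Acal$, $F_\Acal(\fv)=0$): then $\sT^{(f)}\cdot\sR=0$ cannot be normalized, so the statement is read as vacuous there, while on the $\widetilde{\sT}^{(f)}$ side $F_\Acal(\fv)=0$ already makes $\fv$ a global minimizer, hence a critical point, hence trivially a pSVT with singular value $0$.
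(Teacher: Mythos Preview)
Your proof is correct and follows essentially the same route as the paper: both establish the scalar identity $\widetilde{\sT}^{(f)}\cdot\bigotimes_i(\fv^{(i)})^{\otimes 2f_i}=\sum_k\langle\sS_k,\sR\rangle^2=F_\Acal(\fv)$ and the block-contraction identity linking $\widetilde{\sT}^{(f)}$ to $\sT^{(f)}$, then read off both parts. The only cosmetic differences are that you derive the block identity by differentiating the scalar one (the paper expands the contraction directly) and that you explicitly flag the degenerate case $\sigma=0$, which the paper leaves implicit.
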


\begin{proof}
(1)
We establish the correspondence between the pSVTs of $\widetilde{\sT}^{(f)}$ and  $\sT^{(f)}$.  
Fix $k \in \{1,\ldots,\ell\}$.  
For a tuple $(\fv^{(1)},\ldots,\fv^{(\ell)})$, we compute
\begin{align}
\widetilde{\sT}^{(f)} \cdot \bigotimes_{i=1}^\ell (\fv^{(i)})^{\otimes (2f_i - \delta_{i,k})}
&= \sum_{j=1}^r \left\langle \sS_j, \bigotimes_{i=1}^\ell (\fv^{(i)})^{\otimes f_i}\right\rangle
\cdot \left(\sS_j \cdot \bigotimes_{i=1}^\ell (\fv^{(i)})^{\otimes (f_i - \delta_{i,k})}\right)\\
& = 
\left\|\sT^{(f)}\cdot \bigotimes_{i=1}^\ell (\fv^{(i)})^{\otimes f_i}\right\| 
\sum_{j=1}^r \fw_j \cdot \left(\sS_j \cdot \bigotimes_{i=1}^\ell (\fv^{(i)})^{\otimes (f_i - \delta_{i,k})}\right) \\
\label{eq:Ta Ttildea}
&=\left\|\sT^{(f)}\cdot \bigotimes_{i=1}^\ell (\fv^{(i)})^{\otimes f_i}\right\| \sT^{(f)} \cdot \bigotimes_{i=1}^\ell (\fv^{(i)})^{\otimes (f_i - \delta_{i,k})}\otimes \fw.
\end{align} 
If $(\fv^{(1)},\ldots,\fv^{(\ell)})$ is a pSVT of $\widetilde{\sT}^{(f)}$ with singular value $\sigma^2$, then
\(
\widetilde{\sT}^{(f)} \cdot \bigotimes_{i=1}^\ell (\fv^{(i)})^{\otimes (2f_i - \delta_{i,k})} 
= \sigma^2 \fv^{(k)}\).
Equivalently, 
\[
\sT^{(f)} \cdot \bigotimes_{i=1}^\ell (\fv^{(i)})^{\otimes (f_i - \delta_{i,k})} \otimes \fw
= \sigma \fv^{(k)},
\]
since
\(
\Bigl\| \sT^{(f)} \cdot \bigotimes_{i=1}^\ell (\fv^{(i)})^{\otimes f_i} \Bigr\|
= \Bigl( \widetilde{\sT}^{(f)} \cdot \bigotimes_{i=1}^\ell (\fv^{(i)})^{\otimes 2f_i}\Bigr)^{1/2}
= \sigma\).
The definition of $\fw$ also says that
\(
\sT^{(f)} \cdot \bigotimes_{i=1}^\ell (\fv^{(i)})^{\otimes f_i} = \sigma \fw.
\)
Hence $(\fv^{(1)},\ldots,\fv^{(\ell)},\fw)$ satisfies the pSVT equations for $\sT^{(f)}$ with singular value $\sigma$.  

(2) The contraction \( \widetilde{\sT}^{(f)} \cdot \bigotimes_{i=1}^\ell (\fv^{(i)})^{\otimes 2f_i} \) can be rewritten as
\begin{align*}
\sum_{j=1}^r \left\langle P_{\mathrm{sym}}(\sS_j \otimes \sS_j), \bigotimes_{i=1}^\ell (\fv^{(i)})^{\otimes 2f_i}\right\rangle 
&= \sum_{j=1}^r \left\langle \sS_j \otimes \sS_j, P_{\mathrm{sym}}\!\left(\bigotimes_{i=1}^\ell (\fv^{(i)})^{\otimes 2f_i}\right)\right\rangle \\
&= \sum_{j=1}^r \left\langle \sS_j, \bigotimes_{i=1}^\ell (\fv^{(i)})^{\otimes f_i}\right\rangle^2 \\
&= \left\| P_\Acal \left(\bigotimes_{i=1}^\ell (\fv^{(i)})^{\otimes f_i}\right) \right\|^2 = F_\Acal(\fv^{(1)},\ldots,\fv^{(\ell)}).
\end{align*}
The critical points of the function
\[
(\fv^{(1)},\ldots,\fv^{(\ell)}) \;\mapsto\; 
\widetilde{\sT}^{(f)} \cdot \bigotimes_{i=1}^\ell (\fv^{(i)})^{\otimes 2f_i}
= F_\Acal(\fv^{(1)},\ldots,\fv^{(\ell)})
\]
are the pSVTs of $\widetilde{\sT}^{(f)}$, by \cite{lim2005singular}.
We have $F_\Acal(\fv^{(1)},\ldots,\fv^{(\ell)}) \in [0,1]$, 
by definition of $F_\Acal$ and $\|\bigotimes_{j=1}^\ell (\fv^{(j)})^{\otimes f_j}\|=1$. 
It follows from (1) that the global maxima of $F_\Acal$ are the pSVTs of $\sT^{(f)}$ with singular value one. 
The tensor $\bigotimes_{i=1}^\ell (\fv^{(i)}){}^{\otimes f_i}$ lies in $\Acal$, by Theorem \ref{thm:decomp and singular vector}.
\end{proof}

\end{document}